\newtheorem {theorem}{Theorem}[section]
\newtheorem{assumption}{Assumption}
\newtheorem {corollary}{Corollary}[section]
\newtheorem{definition}{Definition}[section]
\newtheorem{example}{Example}[section]
\newtheorem{lemma}{Lemma}[section]
\newtheorem{remark}{Remark}[section]
\newenvironment{proof}[1][Proof]{\textbf{#1.} }{\
\rule{0.5em}{0.5em}}
\def\R{{\mathbb R}}
\def\E{{{\mathbb E}\,}}
\def\P{{\mathbb P}}
\def\Z{{\mathbb Z}}
\def\N{{\mathbb N}}
\def\Var{{\mathop {{\rm Var\, }}}}
\begin{document}

\begin{center}
{\LARGE  Kernel entropy estimation for linear processes}

\bigskip 

\bigskip Hailin Sang$^{a}$, Yongli Sang$^{b}$ and Fangjun Xu$^{c}$

\bigskip$^{a}$ Department of Mathematics, University of Mississippi, University, MS 38677,  USA
sang@olemiss.edu

\bigskip$^{b}$ Department of Mathematics, University of Louisiana at Lafayette, Lafayette, LA 70504, USA
yongli.sang@louisiana.edu

\bigskip$^{c}$  School of Statistics, East China Normal University, Shanghai 200241,  China 
and NYU-ECNU Institute of Mathematical Sciences at NYU Shanghai, Shanghai 200062, China
fangjunxu@gmail.com;\;fjxu@finance.ecnu.edu.cn
 \bigskip
\end{center}

\begin{abstract}
\noindent  Let $\{X_n:  n\in \N\}$ be a linear process with  bounded probability density function $f(x)$. We study the estimation of the quadratic functional $\int_{\R} f^2(x)\, dx$. With a Fourier transform on the kernel function and the projection method, it is shown that,  under certain mild conditions, the estimator
\[
\frac{2}{n(n-1)h_n} \sum_{1\le i<j\le n}K\left(\frac{X_i-X_j}{h_n}\right)
\]
has similar asymptotical properties as the i.i.d. case studied in  Gin\'{e} and Nickl (2008) if the linear process $\{X_n:  n\in \N\}$ has the defined short range dependence. We also provide an application to $L^2_2$ divergence and the extension to multivariate linear processes. The simulation study for linear processes with Gaussian and $\alpha$-stable innovations confirms our theoretical results. As an illustration, we estimate the  $L^2_2$ divergences among the density functions of average annual river flows for four rivers and obtain promising results.  
\vskip.2cm

\vskip.2cm \noindent {\bf Keywords}:
\noindent  linear process, kernel entropy estimation, quadratic functional, projection operator

\vskip.2cm \noindent {\bf 2010 Mathematics Subject Classification}: Primary 60F05;
Secondary 62M10, 60G10, 62G05.

\vskip.2cm 

\end{abstract}

\section{Introduction}
Let $f(x)$ be the probability density function of a sequence of identically distributed observations $\{X_n\}_{n=1}^{\infty}$. The quadratic functional $Q(f)=\int_{\R} f^2(x)\,dx$ plays an important role in the study of quadratic R\'{e}nyi entropy  $R(f)=-\ln (\int f^2(x)\,dx)$,  R\'{e}nyi (1970),   and the Shannon entropy  $S(f)=-\int f(x)\ln f(x)\,dx$,  Shannon (1948). Entropy is widely applied in the fields of information theory, statistical classification, pattern recognition and so on since it is a measure of uncertainty in a probability distribution. In the literature, different estimators for the quadratic functional and entropies with independent data  have been well studied. For example, the nearest-neighbor estimator [Leonenko, Pronzato and Savani (2008);  Penrose and Yukich (2013)],   the kernel estimator [Hall and Marron (1987); Bickel and Ritov (1988); Gin\'{e} and Nickl (2008)],  the orthogonal projection estimator [Laurent (1996, 1997)] and the $U$-statistics estimator  (a special kernel estimator) [Leonenko and Seleznjev (2010);  K\"{a}llberg,  Leonenko and Seleznjev (2012)] under the independence
assumption. It is a challenging problem to study the estimation of the quadratic functional and the corresponding entropies for dependent case.  In  K\"{a}llberg,  Leonenko and Seleznjev (2014)  the authors successfully extended the $U$-statistics method to $m$-dependence sequence. They showed the rate optimality and asymptotic normality of the $U$-statistics estimator for multivariate sequence.  Ahmad (1979) obtained the strong consistency of the quadratic functional by orthogonal series method for stationary time series with strong mixing condition. Nevertheless, to the best of our knowledge, general results for quadratic functional estimations of regular time series data under mild conditions are still unknown.  

In this paper, we study the quadratic functional $\int_{\R} f^2(x)\, dx$ for the following linear process
\begin{align}\label{lp}
X_n=\sum\limits_{i=0}^\infty a_i \varepsilon_{n-i},
\end{align}
 where the innovations $\varepsilon_i$ are independent and identically distributed
(i.i.d.)  real-valued random variables in some probability space $(\Omega, \mathcal{F}, \P)$ and $a_i$ are real coefficients such that $\sum\limits_{i=0}^\infty a_i \varepsilon_{n-i}$ converges in distribution. We would apply the kernel method which was first introduced by Rosenblatt (1956) and Parzen (1962). This method was proved to be successful in estimating the probability density functions and their derivatives, the regression functions and so on in the independent setting; see the books [Devroye and Gy\"{o}rfi (1985); Silverman (1986); Nadaraya (1989); Wand and Jones (1995);  Schimek (2000); Scott (2015)] and references therein. Kernel method was also proved to be successful in estimating the density functions and regression functions for time series data [see Tran (1992); Honda (2000); Wu and Mielniczuk (2002); Wu, Huang and Huang (2010)]. 

In the classical kernel estimation of the quadratic functional $Q(f)$ with i.i.d. observations $\{X_i\}_{i=1}^n$, Gin\'{e} and Nickl (2008) applied a convolution method  to
obtain the bias $\E T_n(h_n)-\int_{\R} f^2(x)\,dx$ of the kernel estimator 
\begin{align}\label{kernel}
T_n(h_n)=\frac{2}{n(n-1)h_n} \sum_{1\le i<j\le n}K\left(\frac{X_i-X_j}{h_n}\right).
\end{align}
They then used the Hoeffding's decomposition for $U$-statistics to study the stochastic part  $T_n(h_n)-\E T_n(h_n)$. In this way, they showed the rate optimality and efficiency of the kernel estimator for the quadratic functional. Krishnamurthy et. al. (2015) applied a similar method to study the $L^2_2$ divergence between two distributions. Nevertheless, this method does not work well in the estimation of the quadratic functional $\int_{\R} f^2(x)dx$ for linear process $X_n$ given in (\ref{lp}) due to the dependence structure. 

Instead we utilize the Fourier transform and projection methods to derive the asymptotic properties of the kernel estimator for the quadratic functional on time series data. With the help of the Fourier transform,  one can easily separate the random part $X_i-X_j$ and the kernel function $K$ in the estimator $T_n(h_n)$ given in (\ref{kernel}) and also obtain the Hoeffding's decomposition for $T_n(h_n)-\E T_n(h_n)$ when $\{X_n\}$ is a linear process; see (\ref{decomp}).  In fact, the Hoeffding's decomposition works for general stationary processes.  Then we use projection and some kind of chain rule to determine which terms contribute to the limit and which terms are negligible. Our method is different from the martingale approximation method applied in Wu and Mielniczuk  (2002) when studying the kernel density estimation for linear processes. Moreover, this method allows us to get rid of the 2nd moment assumption on the innovations of linear processes, that is, $\E|\varepsilon_1|^2<\infty$.  This allows innovations to take heavy tail distributions, which are often followed by data in finance, see Mittnik, Rachev and Paolella (1998).  We only need $\E|e^{\iota\lambda\varepsilon_1}-\phi_{\varepsilon}(\lambda)|^2\leq c_{\gamma,2}(|\lambda|^{2\gamma}\wedge 1)$ or $\E|e^{\iota\lambda\varepsilon_1}-\phi_{\varepsilon}(\lambda)|^4\leq c_{\gamma,4}(|\lambda|^{4\gamma}\wedge 1)$ for some $\gamma\in(0,1]$, where $\phi_{\varepsilon}(\lambda)$ is the characteristic function of the innovations. The range $\gamma\in(0,1]$ here is optimal when $\varepsilon_1$ has non-degenerate distribution. In this paper, the innovations have non-degenerate distribution because of the existence of the probability density function $f(x)$ of the linear process. See Lemma \ref{optimal}. On the other hand, our method gives a clear picture on the relationship between the linear process $\{X_n\}$ and the efficiency of the estimator $T_n(h_n)$. In addition, this methodology would have applications in kernel estimations. For example, we can use it to extend results in Krishnamurthy et. al. (2015) to short memory linear processes, and simplify existing proofs and assumptions in kernel density estimations. Furthermore, we could suggest a more general definition for short memory and long memory linear processes (see Definition \ref{rmk5} below) and extend our results to multivariate linear processes.

Throughout this paper, if not mentioned otherwise, the letter $c$, with or without a subscript, denotes a generic positive finite constant whose exact value is independent of $n$ and may change from line to line. We use $\iota$ to denote $\sqrt{-1}$. For a complex number $z$, we use $\overline{z}$ and $|z|$ to denote its conjugate and modulus, respectively. The notation $\|\cdot\|_2$ means $[\E |\cdot|^2]^{1/2}$. For two functions $a(x)$ and $b(x)$ of real numbers,  
$a(x)\mathbb{\sim}b(x)$ means $a(x)/b(x)\rightarrow1$ as $x\rightarrow \infty$.  For any integrable function $g(x)$, its Fourier transform is defined as $\widehat{g}(u)=\int_{\R} e^{\iota x u}\, g(x)\, dx$. Let $\phi(\lambda)$ be the characteristic function of linear process $X_n=\sum\limits^{\infty}_{i=0} a_i\varepsilon_{n-i}$, $\phi_n(\lambda)$ the empirical characteristic function from observations $\{X_i\}_{i=1}^n$ and $\phi_{\varepsilon}(\lambda)$ the characteristic function of the innovations. That is, $\phi(\lambda)=\E[e^{\iota \lambda  X_i}]$, $\phi_n(\lambda)=\frac{1}{n}\sum\limits^n_{i=1} e^{\iota \lambda X_i}$ and $\phi_{\varepsilon}(\lambda)=\E[e^{\iota \lambda  \varepsilon_1}]$.   For each $i\in\N$, define $H(X_i):=H(X_i)(\lambda):=e^{\iota \lambda X_i}-\phi(\lambda)$,  $\lambda\in\R$. 

The paper has the following structure. The main results are given in Section \ref{main}.  We then give a few examples in Section \ref{example}, an application of the results to $L^2_2$ divergence in Section \ref{application}, simulation study and real data analysis in Section \ref{simulation}, and extensions to multivariate linear processes in Section \ref{extension}. Section \ref{proof} is devoted to the proofs of Theorems \ref{thm1} and \ref{thm2}, based on the Fourier transform. 

\bigskip

\section{Main results}\label{main}
If $\{\varepsilon_{i}:\; i\in\Z\}$ is a sequence of i.i.d. random variables in $L^{p}(\R)$ for some $p>0$, $\E\varepsilon_{i}=0$ when $p\geq 1$, and
$\{a_{i}\}^{\infty}_{i=0}$ is a sequence of real coefficients such that $\sum\limits_{i=0}^{\infty}|a_{i}
|^{2\wedge p}<\infty$, then the linear process $X_n$ given in (\ref{lp}) 
exists and is well-defined. For $p=2$, the
process has short memory (short range dependence) if $\sum\limits^{\infty}_{i=0}|a_i|<\infty$ and long memory (long range dependence)  otherwise.
As for linear processes with symmetric $\alpha$-stable innovations $(0<\alpha\leq 2)$, i.e., the law of innovations having characteristic function $\E[e^{\iota\lambda\varepsilon_1}]=\exp(-c_{\alpha}|\lambda|^\alpha)$ for some positive constant $c_{\alpha}$ only depending on $\alpha$, $\{X_n\}$  has short memory if $\sum\limits_{i=0}^\infty |a_i|^{\alpha/2}<\infty$ and it has long memory if $\sum\limits_{i=0}^\infty |a_i|^{\alpha/2}=\infty$ but $\sum\limits_{i=0}^\infty |a_i|^{\alpha}<\infty$, respectively.  See, e.g., Hsing (1999). 

Let $f(x)$ be the probability density function of the linear process $X_n=\sum\limits_{i=0}^\infty a_i\varepsilon_{n-i}$, $n\in \mathbb{N}$ defined in (\ref{lp}). We study the estimation of the quadratic functional of $f(x)$, that is, $\int_{\R} f^2(x)dx$, when the linear process $X_n=\sum\limits_{i=0}^\infty a_i\varepsilon_{n-i}$ has short memory in the sense of the following definition. 
\begin{definition}\label{rmk5}
A linear process $X_n=\sum\limits_{i=0}^\infty a_i \varepsilon_{n-i}$ defined in (\ref{lp}) has short memory if 
\[
\sum\limits^{\infty}_{i=1} \sqrt{\Var(e^{\iota  \lambda a_i \varepsilon_1})}<\infty
\]
for all $\lambda$ close enough to $0$,  and long memory if 
\[
\sum\limits^{\infty}_{i=1} \sqrt{\Var(e^{\iota  \lambda a_i \varepsilon_1})}=\infty, \;\;\text{but} \;\;\sum\limits^{\infty}_{i=1} \Var(e^{\iota  \lambda a_i \varepsilon_1})<\infty
\]
for all $\lambda$ close enough to $0$.
\end{definition} 
In fact this definition generalizes the definitions of short memory or long memory linear processes
stated at the beginning of this section. Note that $\Var(e^{\iota  \lambda \varepsilon_1})$ can also be written as $\E |e^{\iota  \lambda \varepsilon_1}-\phi_\varepsilon(\lambda)|^2$ or $1-|\phi_\varepsilon(\lambda)|^2$.   By the first part of Lemma \ref{lma}, this definition contains the original one for linear processes with finite second moment innovations. Using the characteristic function of $\alpha$-stable distribution, $\E|e^{\iota \lambda \varepsilon_1}-\phi_{\varepsilon}(\lambda)|^{2}\leq c_{\frac{\alpha}{2},2} \left(|\lambda|^{\alpha}\wedge 1\right)$. Hence it also contains the one introduced in Hsing (1999) for symmetric $\alpha$-stable innovations. See the details in  Example \ref{example2}. 

Moreover, Definition \ref{rmk5} can be used to define short and long memory linear processes with general infinite variance innovations. Therefore, it can be applied to define all short and long memory linear processes with finite  variance innovations or infinite variance innovations. The advantage of Definition \ref{rmk5} is that we only need to know the behavior of $\Var(e^{\iota \lambda a_i\varepsilon_1})=1-|\phi_\varepsilon(a_i\lambda)|^2$ without any moment assumption. This definition relates the coefficients $\{a_i\}$ with the distribution of the innovations which is fully represented by the characteristic function. By contrast, the traditional definition only relates the coefficients $\{a_i\}$ with the moment information of the innovations. The definition in Hsing (1999) is only for linear processes with symmetric $\alpha$-stable innovations and it only relates the coefficients $\{a_i\}$ with the parameter $\alpha$ of the $\alpha$-stable innovations. Throughout the paper, we use Definition \ref{rmk5} to classify linear processes with short or long memory. 

To estimate the quadratic functional $\int_{\R} f^2(x)dx$, we shall apply the kernel method 
\[ 
T_n(h_n)=\frac{2}{n(n-1)h_n} \sum_{1\le i<j\le n}K\left(\frac{X_i-X_j}{h_n}\right),
\]
where the kernel $K$ is a symmetric and bounded function with $\int_{\R} K(u)\, du = 1$ and $\int_{\R} u^2|K(u)|\, du<\infty$. The bandwidth sequence $h_n$ satisfies $0<h_n\to 0$ as $n\to\infty$. The following are the main results of this paper.

\begin{theorem} \label{thm1} 
Assume that $\sum\limits^{\infty}_{i=0} |a_i|^{\gamma}<\infty$, $\int_{\R}|\lambda|^{2\gamma} |\phi_{\varepsilon}(\lambda)|^2\, d\lambda<\infty$ and 
\begin{align}\label{char4moment}
\E|e^{\iota \lambda \varepsilon_1}-\phi_{\varepsilon}(\lambda)|^{4}\leq c_{\gamma,4} \left(|\lambda|^{4\gamma}\wedge 1\right)
\end{align}
for some $\gamma\in(0,1]$. We further assume that $f$ is bounded. Then there exist positive constants $c_1$ and $c_2$ such that
\begin{align} \label{r1}
\Big|\E T_n(h_n)-\int_{\R} f^2(x)\, dx \Big|\leq c_1\left(\frac{1}{n}+h^{2\gamma}_n\right),
\end{align}
\begin{align}  \label{r2}
\E\Big(T_n(h_n)-\E T_n(h_n)-\frac{1}{n}\sum^n_{i=1}Y_i\Big)^2 \leq c_2 \Big(\frac{1}{n^2h_n}+\frac{\eta_{n,\gamma}}{n^2}+\frac{h^{2\gamma}_n}{n}\Big)
\end{align}
and, if additionally $nh_n\to \infty$ as $n\to\infty$,
\begin{align} \label{r3}
\sqrt{n}\, \Big[T_n(h_n)-\E T_n(h_n)\Big]\overset{\mathcal{L}}{\longrightarrow} N(0,4\sigma^2),
\end{align}
where $Y_i=2\big(f(X_i)-\int_{\R} f^2(x)\, dx\big)$, $\eta_{n,\gamma}=\sum\limits^{n}_{\ell=0}\sum\limits^{\infty}_{i=\ell} |a_i|^{\gamma}$
and $\sigma^2=\lim\limits_{n\to\infty} n^{-1}\Var(S_n)$ with 
\[
S_n=\sum\limits^n_{i=1}\left(f(X_i)-\int_{\R} f^2(x)\, dx\right).
\]
\end{theorem}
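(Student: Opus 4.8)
The plan is to linearize the $U$-statistic $T_n(h_n)$ by a Fourier transform and then run a Hoeffding-type argument adapted to the dependence. First I would apply Fourier inversion, $\frac1{h_n}K\!\big(\frac{X_i-X_j}{h_n}\big)=\frac1{2\pi}\int_{\R}\widehat K(h_n\lambda)e^{-\iota\lambda(X_i-X_j)}\,d\lambda$, so that, since $\widehat K$ is real and even,
\[
T_n(h_n)=\frac{1}{2\pi n(n-1)}\int_{\R}\widehat K(h_n\lambda)\Big(\big|\textstyle\sum_{i=1}^n e^{\iota\lambda X_i}\big|^2-n\Big)\,d\lambda .
\]
Substituting $\sum_{i=1}^n e^{\iota\lambda X_i}=n\phi(\lambda)+\sum_{i=1}^n H(X_i)(\lambda)$ and expanding the square yields the decomposition (\ref{decomp}): $T_n(h_n)=I_n+L_n+R_n$, with $I_n$ deterministic, $L_n=\frac1{\pi(n-1)}\sum_i\Re\int\widehat K(h_n\lambda)\overline{\phi(\lambda)}H(X_i)(\lambda)\,d\lambda$ the mean-zero linear term, and $R_n$ the quadratic remainder built from $|\sum_i H(X_i)(\lambda)|^2$. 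The algebraic key is that the diagonal of the quadratic part is again linear, $\sum_i\big(|H(X_i)(\lambda)|^2-\E|H(X_i)(\lambda)|^2\big)=-2\Re\big(\overline{\phi(\lambda)}\sum_i H(X_i)(\lambda)\big)$, so that $R_n-\E R_n=-\frac1n L_n+D_n$, where $D_n$ is the genuinely degenerate part coming from $\sum_{i\ne j}\big(H(X_i)(\lambda)\overline{H(X_j)(\lambda)}-\E[\,\cdot\,]\big)$; hence $T_n(h_n)-\E T_n(h_n)=\frac{n-1}{n}L_n+D_n$. Finally, Fourier inversion applied to $f$ gives $\int_{\R}\widehat K(h_n\lambda)\overline{\phi(\lambda)}H(X_i)(\lambda)\,d\lambda\to 2\pi\big(f(X_i)-\int_{\R}f^2\big)=\pi Y_i$ as $h_n\to0$, which is why $\frac{n-1}{n}L_n$ is to be compared with $\frac1n\sum_i Y_i$.

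For the bias (\ref{r1}): by stationarity, $\E T_n(h_n)-\int_{\R}f^2=\frac1{\pi n(n-1)}\sum_{\ell=1}^{n-1}(n-\ell)\int\widehat K(h_n\lambda)\big(\Re\psi_\ell(\lambda)-|\phi(\lambda)|^2\big)\,d\lambda$, where $\psi_\ell(\lambda)=\E[e^{\iota\lambda(X_1-X_{1+\ell})}]$ (using $\int_{\R}f^2=\frac1{2\pi}\int|\phi|^2$ and $\frac1{\pi n(n-1)}\sum_\ell(n-\ell)=\frac1{2\pi}$). I split the integrand as $(\widehat K(h_n\lambda)-1)|\phi(\lambda)|^2+\widehat K(h_n\lambda)\big(\Re\psi_\ell(\lambda)-|\phi(\lambda)|^2\big)$. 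The first piece is the classical kernel bias: $|\widehat K(u)-1|\le c(|u|^2\wedge1)\le c(|u|^{2\gamma}\wedge1)$ (from $\int_{\R}uK(u)\,du=0$ and $\int_{\R}u^2|K(u)|\,du<\infty$) and $\int|\lambda|^{2\gamma}|\phi(\lambda)|^2\,d\lambda<\infty$ (which follows from $\int|\lambda|^{2\gamma}|\phi_\varepsilon|^2<\infty$ since $|\phi(\lambda)|\le|\phi_\varepsilon(a_{i_0}\lambda)|$ for the first nonzero $a_{i_0}$) give the $O(h_n^{2\gamma})$ term. For the second piece I write $\psi_\ell(\lambda)-|\phi(\lambda)|^2=\overline{\phi_{W_\ell}(\lambda)}\,\E[H(X_1)(\lambda)e^{-\iota\lambda V_\ell}]$ (decomposing $X_{1+\ell}=W_\ell+V_\ell$ with $W_\ell$ independent of $X_1$ and $V_\ell=\sum_{m\ge0}a_{m+\ell}\varepsilon_{1-m}$), and expand $H(X_1)(\lambda)=\sum_k D_k$ along the martingale differences $D_k=\big(\prod_{j<k}e^{\iota\lambda a_j\varepsilon_{1-j}}\big)\big(e^{\iota\lambda a_k\varepsilon_{1-k}}-\phi_\varepsilon(a_k\lambda)\big)\big(\prod_{j>k}\phi_\varepsilon(a_j\lambda)\big)$; each $\E[D_k e^{-\iota\lambda V_\ell}]$ then contains the covariance factor $\mathrm{Cov}\big(e^{\iota\lambda a_k\varepsilon_1},e^{-\iota\lambda a_{k+\ell}\varepsilon_1}\big)$, which by Cauchy--Schwarz and the $(\ref{char4moment})$-type bounds of Lemma \ref{lma} is $O\big(|\lambda|^{2\gamma}|a_k|^\gamma|a_{k+\ell}|^\gamma\big)$. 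Summing over $k$ and controlling the $\lambda$-tail by peeling off two characteristic-function factors gives $\int\widehat K(h_n\lambda)\big|\Re\psi_\ell(\lambda)-|\phi(\lambda)|^2\big|\,d\lambda\le c\sum_k|a_k|^\gamma|a_{k+\ell}|^\gamma$; since $\sum_{\ell\ge1}\sum_k|a_k|^\gamma|a_{k+\ell}|^\gamma\le(\sum_i|a_i|^\gamma)^2<\infty$, summing against the weights $(n-\ell)$ produces the $O(1/n)$ term. This is where the short-memory hypothesis ($\sum_i|a_i|^\gamma<\infty$, Definition \ref{rmk5}) is used.

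For (\ref{r2}): by the decomposition, $T_n(h_n)-\E T_n(h_n)-\frac1n\sum_i Y_i=\frac1{\pi n}\sum_i\Re\,\mathrm{err}_i(h_n)+D_n$, with $\mathrm{err}_i(h_n)=\int(\widehat K(h_n\lambda)-1)\overline{\phi(\lambda)}H(X_i)(\lambda)\,d\lambda$. The first term is handled as in the bias step: $\E|\mathrm{err}_i(h_n)|^2=O(h_n^{2\gamma})$, and summing over $i$ costs only a factor $n$ because the $\phi$-weighted exponentials have summable covariances along the linear process, so $\E\big(\frac1{\pi n}\sum_i\Re\,\mathrm{err}_i\big)^2=O(h_n^{2\gamma}/n)$. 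The heart of the proof is $\E D_n^2$: squaring and expanding turns it into a fourfold sum over $(i\ne j,\ k\ne l)$ at frequencies $(\lambda,\lambda,\mu,\mu)$ of $\E[H_i^\lambda\overline{H_j^\lambda}\,\overline{H_k^\mu}H_l^\mu]-c_{ij}(\lambda)\overline{c_{kl}(\mu)}$ (where $H_i^\lambda:=H(X_i)(\lambda)$, $c_{ij}(\lambda):=\E[H_i^\lambda\overline{H_j^\lambda}]$), integrated against $\widehat K(h_n\lambda)\widehat K(h_n\mu)$ with prefactor $(n^2(n-1)^2)^{-1}$. In the i.i.d.\ skeleton only the paired configurations $\{i,j\}=\{k,l\}$ survive, and after the rescaling $\lambda=u/h_n$, $\mu=v/h_n$ and peeling a factor $\int\widehat K^2$ they contribute $O(n^{-2}h_n^{-1})$, exactly as in Gin\'{e} and Nickl (2008). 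The extra contributions for a linear process come from (a) the nonzero lagged covariances $c_{ij}(\lambda)=\psi_{j-i}(\lambda)-|\phi(\lambda)|^2$ and (b) fourfold products whose indices do not pair but whose $X$'s share innovations; both I would estimate by conditioning on the $\sigma$-fields $\sigma(\varepsilon_s:s\le m)$ (a chain rule), using the fourth-moment bound $\E|H(X)(\lambda)|^4\le c(|\lambda|^{4\gamma}\wedge1)$ — obtained from the same martingale-difference representation combined with (\ref{char4moment}) — together with the telescoping covariance estimates above. Bookkeeping of how many index configurations contribute at each shared-innovation lag is what produces the remaining terms $\eta_{n,\gamma}/n^2$ and $h_n^{2\gamma}/n$, and I expect this bookkeeping to be the main obstacle.

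For (\ref{r3}): multiplying (\ref{r2}) by $n$, and using $nh_n\to\infty$, $h_n\to0$, and $\eta_{n,\gamma}=o(n)$ (valid for any short-memory linear process since $\sum_{i\ge\ell}|a_i|^\gamma\to0$), the right-hand side tends to $0$, so $\sqrt n\,(T_n(h_n)-\E T_n(h_n))-\frac1{\sqrt n}\sum_i Y_i\to0$ in $L^2$. Since $\frac1{\sqrt n}\sum_i Y_i=\frac{2}{\sqrt n}S_n$ and $f$ is bounded, $\frac1{\sqrt n}S_n$ converges to $N(0,\sigma^2)$ by a central limit theorem for bounded functionals of short-range dependent linear processes (e.g.\ via martingale approximation), which also identifies $\sigma^2=\sum_{k\in\Z}\mathrm{Cov}(f(X_0),f(X_k))=\lim_n n^{-1}\Var(S_n)$; hence $\sqrt n\,(T_n(h_n)-\E T_n(h_n))\overset{\mathcal{L}}{\longrightarrow}N(0,4\sigma^2)$.
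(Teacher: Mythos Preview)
Your overall architecture---Fourier inversion, Hoeffding-type decomposition into a linear part approximating $\frac{1}{n}\sum_i Y_i$ and a degenerate part $D_n$, then CLT for $n^{-1/2}S_n$---matches the paper's proof exactly, and your treatment of the bias (\ref{r1}), the linear error $\frac1{\pi n}\sum_i\mathrm{err}_i(h_n)$ giving $h_n^{2\gamma}/n$, and the reduction of (\ref{r3}) to a CLT for bounded functionals of the linear process are all essentially the paper's Steps~1, 4 and~5. A minor slip: the $h_n^{2\gamma}/n$ term comes only from the linear error $N_n-\overline N_n$, not from $D_n$; the paper gets $\E D_n^2\le c\big(n^{-2}h_n^{-1}+\eta_{n,\gamma}n^{-2}\big)$ with no $h_n$-power in the numerator.

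Where your proposal diverges from the paper is the control of $\E D_n^2$, and this is exactly the step you flag as the obstacle. Your plan is to expand the fourfold sum $\sum_{i\ne j,\,k\ne l}$ and classify index configurations by how the underlying innovations are shared, using a global bound $\E|H(X)(\lambda)|^4\le c(|\lambda|^{4\gamma}\wedge1)$. The paper does something more structured: it first writes $H(X_i)(\lambda)=\sum_{k\le i}\mathcal P_k H(X_i)(\lambda)$ with the projection operators $\mathcal P_k Z=\E[Z|\mathcal F_k]-\E[Z|\mathcal F_{k-1}]$, and then decomposes $D_n$ itself (not $\E D_n^2$) according to the total projection lag $|k-i|+|\ell-j|$, into pieces $I_0,\dots,I_{p_1},I_{p_1+1}$ where $p_1=\min\{i\ge1:a_i\ne0\}$. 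Each $I_p$ with $p\le p_1$ is further sliced by the data lag $j-i$ (into $I_{0,1},\dots,I_{0,p_2+1}$ with $p_2$ the second nonzero index), and each slice is bounded by $O(n^{-2}h_n^{-1})$ after peeling off two characteristic-function factors $|\phi_\varepsilon(\cdot)|$ to make the $\lambda$-integrals converge. Only the large-lag piece $I_{p_1+1}$ is squared and treated as a fourfold sum, and the orthogonality of the $\mathcal P_k$'s reduces it to a sum over configurations where at least two of $k_1,\dots,k_4$ coincide and dominate the others; the fourth-moment hypothesis (\ref{char4moment}) is applied at the level of the \emph{projections} $\mathcal P_k H(X_i)$, giving factors $|a_{i-k}|^\gamma$ that, when summed, produce $\eta_{n,\gamma}$. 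The advantage of this route over your combinatorial expansion is that the projection orthogonality kills most cross-terms before you square, so the ``bookkeeping'' collapses to a handful of explicitly controllable pieces; your four-index classification without projections would have to reproduce this cancellation by hand and would be substantially messier.
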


\begin{theorem} \label{thm2} 
Assume that $\sum\limits^{\infty}_{i=0}|a_i|^{\gamma}<\infty$, $\int_{\R}|\lambda|^{2\gamma} |\phi_{\varepsilon}(\lambda)|^2\, d\lambda<\infty$ and 
\begin{align}\label{char2moment}
\E|e^{\iota \lambda \varepsilon_1}-\phi_{\varepsilon}(\lambda)|^{2}\leq c_{\gamma,2} \left(|\lambda|^{2\gamma}\wedge 1\right)
\end{align} 
for some $\gamma\in(0,1]$. We further assume that $f$ is bounded and $\int_{\R} |\widehat{K}(\lambda)|\, d\lambda<\infty$. Then there exist positive constants $c_3$ and $c_4$ such that
\begin{align} \label{r4}
\Big|\E T_n(h_n)-\int_{\R} f^2(x)\, dx \Big|\leq c_3\left(\frac{1}{n}+h^{2\gamma}_n\right),
\end{align}
\begin{align}  \label{r5}
\E\Big|T_n(h_n)-\E T_n(h_n)-\frac{1}{n}\sum^n_{i=1}Y_i\Big| \leq c_4 \Big(\frac{1}{nh_n}+\frac{h^{\gamma}_n}{\sqrt{n}}\Big)
\end{align}
and, if additionally $\sqrt{n}h_n\to\infty$ as $n\to\infty$,
\begin{align} \label{r6}
\sqrt{n}\, \Big[T_n(h_n)-\E T_n(h_n)\Big]\overset{\mathcal{L}}{\longrightarrow} N(0,4\sigma^2)
\end{align}
for some $\sigma^2\in(0,\infty)$.

\end{theorem}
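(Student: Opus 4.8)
The overall strategy mirrors the proof of Theorem \ref{thm1}, but now using only a second-moment control on $e^{\iota\lambda\varepsilon_1}-\phi_\varepsilon(\lambda)$ together with the integrability hypothesis $\int_\R|\widehat K(\lambda)|\,d\lambda<\infty$, which lets us pay the cost of the weaker moment assumption by exploiting absolute integrability of $\widehat K$ in the Fourier inversion. First I would establish the bias bound (\ref{r4}). Writing $K((X_i-X_j)/h_n)=\frac{1}{2\pi}\int_\R \widehat K(\lambda)e^{-\iota\lambda(X_i-X_j)/h_n}\,d\lambda$ and taking expectations, one gets
\begin{align}
\E T_n(h_n)=\frac{1}{2\pi}\int_\R \widehat K(\lambda)|\phi(\lambda/h_n)|^2\,d\lambda=\frac{1}{2\pi h_n}\int_\R \widehat K(h_n u)|\phi(u)|^2\,du,\notag
\end{align}
and since $\int_\R f^2(x)\,dx=\frac{1}{2\pi}\int_\R|\phi(u)|^2\,du$ by Plancherel, the bias is $\frac{1}{2\pi}\int_\R(\widehat K(h_n u)-1)|\phi(u)|^2\,du$ plus the $O(1/n)$ correction coming from the $U$-statistic normalization $n(n-1)$ versus $n^2$. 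The kernel conditions $\int K=1$, $\int u^2|K|<\infty$ give $\widehat K(0)=1$, $\widehat K'(0)=0$ and $|\widehat K(v)-1|\le c\,(|v|^{2}\wedge 1)$; combined with the factorization $|\phi(u)|^2=\prod_i|\phi_\varepsilon(a_i u)|^2$ and the second-moment hypothesis, which yields $|\phi_\varepsilon(\lambda)|^2=1-\Var(e^{\iota\lambda\varepsilon_1})\le 1$ and the tail decay needed to make $\int_\R|u|^{2\gamma}|\phi(u)|^2\,du<\infty$ finite from $\int_\R|\lambda|^{2\gamma}|\phi_\varepsilon(\lambda)|^2\,d\lambda<\infty$ and $\sum_i|a_i|^\gamma<\infty$, we obtain $|\E T_n(h_n)-\int f^2|\le c(1/n+h_n^{2\gamma})$.

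The core is the $L^1$ bound (\ref{r5}) on the Hoeffding remainder. Using the Fourier representation one writes $T_n(h_n)-\E T_n(h_n)$ as an integral against $\widehat K(h_n u)$ of the degenerate-plus-linear $U$-statistic built from $H(X_i)(u)=e^{\iota u X_i}-\phi(u)$, giving the decomposition (\ref{decomp}): a linear part $\frac{1}{n}\sum_i \widetilde Y_i^{(n)}$ and a quadratic (degenerate) part. The linear part is $\frac{2}{n}\sum_i\frac{1}{2\pi}\int \widehat K(h_n u)\,\overline{\phi(u)}\,(e^{\iota u X_i}-\phi(u))\,du$ (up to conjugation/symmetrization), which should be compared with $\frac1n\sum_i Y_i=\frac1n\sum_i 2(f(X_i)-\int f^2)$; here $f(X_i)=\frac{1}{2\pi}\int\overline{\phi(u)}e^{\iota u X_i}\,du$ and $\int f^2=\frac{1}{2\pi}\int|\phi(u)|^2\,du$, so the difference is $\frac{2}{n}\sum_i\frac{1}{2\pi}\int(\widehat K(h_n u)-1)\overline{\phi(u)}(e^{\iota u X_i}-\phi(u))\,du$, whose $L^2$ norm per term is controlled by $\int\int|\widehat K(h_n u)-1||\widehat K(h_n v)-1||\phi(u)||\phi(v)|\,|{\rm Cov}(e^{\iota u X_1},e^{\iota v X_1})|$-type estimates; the diagonal $u=v$ piece gives the $h_n^{\gamma}/\sqrt n$ rate after using $|\widehat K(h_n u)-1|\le c((h_n|u|)^{2}\wedge 1)$, interpolating to exponent $\gamma$, and $\int|u|^{2\gamma}|\phi(u)|^2<\infty$; the off-diagonal (covariance across different frequencies) piece is where short memory enters and is absorbed similarly. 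For the quadratic part we split by the gap $\ell=j-i$: when $\ell$ is large the two blocks of innovations entering $X_i$ and $X_j$ overlap little, and the key lemma is a ``chain rule''/projection estimate bounding $\|\Pi_k(e^{\iota u X_i}e^{\iota v X_j})\|$ in terms of $\prod\sqrt{\Var(e^{\iota\cdot a_m\varepsilon_1})}$ — exactly the quantities in Definition \ref{rmk5}, so that $\sum_\ell(\cdot)<\infty$ under short memory — and the bandwidth-dependent factor $\frac{1}{2\pi}\int|\widehat K(h_n u)|\,|\phi(u)|\,du=O(1/h_n)$ (here absolute integrability of $\widehat K$ is used, giving the factor $\|\widehat K\|_{L^1}$), producing the $1/(nh_n)$ term after the $U$-statistic variance $n^{-2}$.

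Finally, (\ref{r6}) follows from (\ref{r4}) and (\ref{r5}): when $\sqrt n\,h_n\to\infty$, both $1/(nh_n)$ and $h_n^\gamma/\sqrt n$ are $o(1/\sqrt n)$, so $\sqrt n[T_n(h_n)-\E T_n(h_n)]=\frac{1}{\sqrt n}\sum_{i=1}^n Y_i+o_{L^1}(1)$, and since $\{Y_i\}=\{2(f(X_i)-\int f^2)\}$ is a functional of the linear process that is short-range dependent, the CLT for $n^{-1/2}S_n$ with asymptotic variance $\sigma^2=\lim n^{-1}\Var(S_n)$ (invoked exactly as in Theorem \ref{thm1}) gives the stated $N(0,4\sigma^2)$ limit, with $\sigma^2\in(0,\infty)$ because $f$ is bounded and nondegenerate. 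I expect the main obstacle to be the quadratic-part estimate: controlling the degenerate $U$-statistic over pairs $(i,j)$ with the Fourier kernel $\widehat K(h_n u)\widehat K(h_n v)$ requires the projection/chain-rule lemma to interact cleanly with the $L^1$-norm of $\widehat K$ — unlike Theorem \ref{thm1}, where the stronger fourth-moment bound let one keep everything in $L^2$ and use $\int|\widehat K(h_n u)|^2$-type quantities — so verifying that the weaker hypothesis still closes the estimate at the claimed rate $1/(nh_n)$ (rather than a worse power of $h_n$) is the delicate point.
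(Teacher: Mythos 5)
Your overall skeleton (Fourier inversion, the decomposition (\ref{decomp}) into a linear part $N_n$ and a degenerate part $D_n$, the $h_n^{\gamma}/\sqrt{n}$ bound for the linear part, and the transfer to the CLT when $\sqrt{n}h_n\to\infty$) is the paper's, but the step you yourself flag as delicate --- the degenerate part under only (\ref{char2moment}) --- is a genuine gap, and the route you sketch would not close it. Projection/chain-rule bounds on products $\mathcal{P}_{k}\big[H(X_i)(\lambda_1)\big]\,\mathcal{P}_{\ell}\big[H(X_j)(\lambda_2)\big]$ at two frequencies are exactly the Step 3 computation in the proof of Theorem \ref{thm1}, and these are fourth-moment quantities in $e^{\iota\lambda\varepsilon_1}-\phi_{\varepsilon}(\lambda)$; that is precisely why (\ref{char4moment}) is assumed there. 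The paper's fix is different and much simpler: since (\ref{r5}) is only an $L^1$ bound and $\widehat{K}\in L^1$, put the modulus inside the $\lambda$-integral and observe that after inverting $K((X_i-X_j)/h_n)$ both factors carry the \emph{same} frequency, so for each fixed $\lambda$ one has the completion-of-square identity $\sum_{i\neq j}H(X_i)(\lambda)\overline{H(X_j)(\lambda)}=n^2\,|\phi_n(\lambda)-\phi(\lambda)|^2-\sum_{i=1}^n|H(X_i)(\lambda)|^2$. Hence
\begin{align*}
\E|D_n|\;\le\;\frac{c}{n^2}\int_{\R}|\widehat{K}(\lambda h_n)|\Big(n^2\,\E\big|\phi_n(\lambda)-\phi(\lambda)\big|^2+n\,\E\big|H(X_1)(\lambda)\big|^2\Big)\,d\lambda,
\end{align*}
and Lemma \ref{lma2} (proved by projecting the \emph{single} factors $H(X_i)$, so it needs nothing beyond (\ref{char2moment}) and $\sum_i|a_i|^{\gamma}<\infty$) bounds $\E|\phi_n(\lambda)-\phi(\lambda)|^2$ by $\frac{c}{n}\big[m+c'|\lambda|^{\gamma}\prod_{j<m}|\phi_{\varepsilon}(\lambda a_j)|\big]^2$, whose integral against $|\widehat{K}(\lambda h_n)|$ is $O(h_n^{-1})+O(1)$ thanks to $\int_{\R}|\widehat{K}(\lambda)|\,d\lambda<\infty$ and $\int_{\R}|\lambda|^{2\gamma}|\phi_{\varepsilon}(\lambda)|^2\,d\lambda<\infty$. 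This yields $\E|D_n|\le c\big(\frac{1}{nh_n}+\frac{1}{n}\big)$, i.e.\ the $1/(nh_n)$ term of (\ref{r5}); this single-frequency reduction to the mean squared error of the empirical characteristic function is the decisive idea missing from your plan, and without it (or an equivalent) the claimed rate under (\ref{char2moment}) alone is not established.

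A second, smaller error is in your bias step: for a linear process $\E e^{-\iota\lambda(X_i-X_j)/h_n}\neq|\phi(\lambda/h_n)|^2$, and there is no ``$n(n-1)$ versus $n^2$'' normalization correction ($T_n$ is already averaged over the $n(n-1)$ ordered pairs). The $1/n$ in (\ref{r4}) comes from dependence, namely the cross term (\ref{cross}) involving $\sum_{i\neq j}\big|\E[H(X_i)(\lambda)\overline{H(X_j)(\lambda)}]\big|$, which is controlled by the projection estimate of Lemma \ref{lma1} (using short memory and $\int_{\R}(1+|\lambda|^{2\gamma})|\phi_{\varepsilon}(\lambda)|^2\,d\lambda<\infty$); you need that covariance bound, or an equivalent, to obtain (\ref{r4}). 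Your treatment of the linear part (Plancherel together with $|\widehat{K}(\lambda h_n)-\widehat{K}(0)|\le c(|\lambda h_n|^{2\gamma}\wedge 1)$, giving $h_n^{\gamma}/\sqrt{n}$) and the passage to (\ref{r6}) via Wu's lemma once $\sqrt{n}h_n\to\infty$ do match the paper's Steps 4 and 5.
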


\begin{remark}\label{rmk0}
By Definition \ref{rmk5}, Theorems \ref{thm1} and \ref{thm2} are results for short memory linear processes. On the other hand, we work on the well-defined linear process (\ref{lp}) in Theorem \ref{thm1}, Theorem \ref{thm2} and the rest of the paper. If we assume $\varepsilon_1\in L^\gamma(\mathbb{R})$, together with condition $\sum\limits^{\infty}_{i=0}|a_i|^{\gamma}<\infty$ in the theorems, then the linear process (\ref{lp}) is well-defined. However, the moment condition $\varepsilon_1\in L^\gamma(\R)$ is not needed in the proofs of our results. 
\end{remark}

\begin{remark} \label{rmk1} When $\{X_n\}$ is an i.i.d. sequence, the kernel estimator (\ref{kernel}) was studied in Gin\'{e} and Nickl (2008) with assumptions that  the density function $f(x)$ is bounded and the smoothness condition $\int_{\R}|\lambda|^{2\gamma} |\phi(\lambda)|^2\, d\lambda<\infty$ of $f(x)$ for  $\gamma$ restricted to $(0,\frac{1}{2}]$. If $\int_{\R}u^2|K(u)|\, du<\infty$ is replaced by a weaker condition $\int_{\R} |u|^{\beta}|K(u)|\, du<\infty$ for some $0<\beta\le 2$, then the upper bounds in (\ref{r1}) and (\ref{r4}) above should be replaced by a constant multiple of $\frac{1}{n}+h^{\beta\wedge 2\gamma}_n$. 
\end{remark}

\begin{remark} \label{rmk2} The assumption (\ref{char4moment}) in Theorem \ref{thm1} is required to obtain (\ref{r2}) and (\ref{r3}). To just get the inequality (\ref{r1}), we only need to assume 
(\ref{char2moment}).  Using Lemma \ref{lma}, we could easily see that $\E|\varepsilon_1|^{4\gamma}<\infty$ and $\E|\varepsilon_1|^{2\gamma}<\infty$ are sufficient for these two assumptions, respectively. However, these two moment conditions are not necessary. For example, when the innovation $\varepsilon_1$ takes symmetric $\alpha$-stable distributions with $\alpha\in(0,2)$,  $\E|e^{\iota \lambda \varepsilon_1}-\phi_{\varepsilon}(\lambda)|^{4}\leq c_{\frac{\alpha}{4},4} \left(|\lambda|^{\alpha}\wedge 1\right)$, $\E|e^{\iota \lambda \varepsilon_1}-\phi_{\varepsilon}(\lambda)|^{2}\leq c_{\frac{\alpha}{2},2} \left(|\lambda|^{\alpha}\wedge 1\right)$, but $\E|\varepsilon_1|^{\alpha}=\infty$. On the other hand, assumption (\ref{char4moment}) is stronger than assumption (\ref{char2moment}). This explains why the results in Theorem \ref{thm1} are better than the results in Theorem \ref{thm2} if both assumptions (\ref{char4moment}) and (\ref{char2moment}) hold for a same $\gamma$ in $(0,1]$. 
\end{remark}

\begin{remark}\label{rmk3} If the underlying linear process $\{X_n\}$ is i.i.d. or $m$-dependent, then the assumption $\E|e^{\iota \lambda \varepsilon_1}-\phi_{\varepsilon}(\lambda)|^{4}\leq c_{\gamma,4} \left(|\lambda|^{4\gamma}\wedge 1\right)$ for some $\gamma\in(0,1]$ in Theorem \ref{thm1} can be removed by making some minor modifications of its proof, and $\eta_{n,\gamma}$ can be replaced by $0$. Furthermore, for the i.i.d. case, the right hand side of (\ref{r1}) can be replaced by $c_1h^{2\gamma}_n$.
\end{remark}

\begin{remark}\label{rmk4} In Theorems \ref{thm1} and \ref{thm2}, if we further have $\int_{\R}|\lambda|^{4\gamma} |\phi_{\varepsilon}(\lambda)|^2\, d\lambda<\infty$, then, using the fact $|\widehat{K}(\lambda h_n)-\widehat{K}(0)|\leq c_{\beta}|\lambda h_n|^{2\beta}$ for all $\beta\in[0,1]$,  the inequality (\ref{r2}) can be improved to  
\begin{align*} 
\E\Big|T_n(h_n)-\E T_n(h_n)-\frac{1}{n}\sum^n_{i=1}Y_i\Big|^2 \leq c_2 \Big(\frac{1}{n^2h_n}+\frac{\eta_{n,\gamma}}{n^2}+\frac{h^{4\gamma}_n}{n}\Big)
\end{align*}
while (\ref{r5}) to
\begin{align*} 
\E\Big|T_n(h_n)-\E T_n(h_n)-\frac{1}{n}\sum^n_{i=1}Y_i\Big| \leq c_4 \Big(\frac{1}{nh_n}+\frac{h^{2\gamma}_n}{\sqrt{n}}\Big).
\end{align*}
\end{remark}

As consequences of Theorem \ref{thm1} and Theorem \ref{thm2}, we can easily obtain the following results.
\begin{corollary} \label{corollary1}
Under the assumptions in Theorem \ref{thm1}. Let $h_n$ have order $n^{-\frac{2}{4\gamma+1}}$. If $0 < \gamma \le 1/4$,  then 
\begin{align} \label{opt1}
T_n(h_n)-\int_{\R} f^2(x)\,dx=O_P(n^{-\frac{4\gamma}{4\gamma+1}});
\end{align}
if $1/4<\gamma\leq 1$, then
\begin{align} \label{clt1}
\sqrt{n}\, \Big[T_n(h_n)-\int_{\R} f^2(x)\,dx\Big]\overset{\mathcal{L}}{\longrightarrow} N(0,4\sigma^2),
\end{align}
and, if we additional require $K(x)\geq 0$ for all $x$, the estimator  $-\ln (\frac{1}{n}+T_n(h_n))$ of the quadratic R\'{e}nyi entropy  $R(f)=-\ln (\int_{\R} f^2(x)\,dx)$ satisfies 
\begin{align} \label{cltr1}
\sqrt{n}\, \Big[-\ln (\frac{1}{n}+T_n(h_n))-R(f)\Big]\overset{\mathcal{L}}{\longrightarrow} N\left(0,\frac{4\sigma^2}{(\int_{\R} f^2(x)\,dx)^2}\right),
\end{align}
where $\sigma^2=\lim\limits_{n\to\infty} n^{-1}\Var(S_n)$ with $S_n=\sum\limits^n_{i=1}\left(f(X_i)-\int_{\R} f^2(x)\, dx\right)$.
\end{corollary}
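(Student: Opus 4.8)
The plan is to derive Corollary \ref{corollary1} directly from Theorem \ref{thm1} by choosing the stated bandwidth and carefully tracking the orders of the three terms appearing in (\ref{r1}) and (\ref{r2}). First I would write the decomposition
\[
T_n(h_n)-\int_{\R}f^2(x)\,dx=\Big(\E T_n(h_n)-\int_{\R}f^2(x)\,dx\Big)+\Big(T_n(h_n)-\E T_n(h_n)-\tfrac1n\sum_{i=1}^n Y_i\Big)+\tfrac1n\sum_{i=1}^n Y_i,
\]
so that the problem reduces to controlling each of the three pieces. With $h_n\asymp n^{-2/(4\gamma+1)}$, the bias bound (\ref{r1}) gives an error of order $n^{-1}+h_n^{2\gamma}=n^{-1}+n^{-4\gamma/(4\gamma+1)}$; since $4\gamma/(4\gamma+1)<1$ for all $\gamma\in(0,1]$, the bias is $O(n^{-4\gamma/(4\gamma+1)})$. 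For the ``remainder'' middle term, I would check that with this choice of $h_n$ the right-hand side of (\ref{r2}) has each summand of order at most $n^{-1}\cdot n^{-4\gamma/(4\gamma+1)}$: indeed $n^{-2}h_n^{-1}=n^{-2+2/(4\gamma+1)}$, $\eta_{n,\gamma}/n^2\le (\sum_i|a_i|^\gamma)\,n\cdot n^{-2}=O(n^{-1})$ using $\sum_i|a_i|^\gamma<\infty$, and $h_n^{2\gamma}/n=n^{-1-4\gamma/(4\gamma+1)}$; hence by Chebyshev the middle term is $o_P(n^{-4\gamma/(4\gamma+1)})$ and also $o_P(n^{-1/2})$.

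Next I would split into the two ranges of $\gamma$. For $0<\gamma\le 1/4$ the dominant term is the bias, of order $n^{-4\gamma/(4\gamma+1)}$, while $\tfrac1n\sum Y_i=O_P(n^{-1/2})$ by the CLT for $S_n$ (or merely by $\Var(\tfrac1n\sum Y_i)=O(n^{-1})$, which follows from $\sigma^2<\infty$), and since $4\gamma/(4\gamma+1)\le 1/2$ here, the bias dominates, giving (\ref{opt1}). For $1/4<\gamma\le 1$ one has $nh_n\to\infty$ (as $n^{1-2/(4\gamma+1)}\to\infty$ precisely when $4\gamma+1>2$), so (\ref{r3}) applies; combining it with the bias bound, which is now $o(n^{-1/2})$ because $4\gamma/(4\gamma+1)>1/2$, and with the $o_P(n^{-1/2})$ control of the remainder term, Slutsky's theorem yields (\ref{clt1}).

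Finally, for the R\'enyi entropy statement I would apply the delta method to the map $t\mapsto-\ln t$ at $t=\int_{\R}f^2(x)\,dx>0$ (positive since $f$ is a bounded density). The extra shift by $1/n$ is harmless: $\sqrt n\cdot\tfrac1n=n^{-1/2}\to 0$, so $\tfrac1n+T_n(h_n)$ has the same limiting behavior as $T_n(h_n)$ after centering by $\int f^2$ and scaling by $\sqrt n$; the positivity requirement $\tfrac1n+T_n(h_n)>0$ needed to make the logarithm well-defined is guaranteed by $K\ge 0$, which forces $T_n(h_n)\ge 0$. Then the delta method with derivative $-1/\int f^2$ turns the $N(0,4\sigma^2)$ limit into $N\!\big(0,4\sigma^2/(\int_{\R}f^2(x)\,dx)^2\big)$, giving (\ref{cltr1}). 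The only mildly delicate point is bookkeeping the case boundary $\gamma=1/4$ — there bias and stochastic fluctuation are of the same order $n^{-1/2}$, and I would note that (\ref{opt1}) still holds since $O_P(n^{-1/2})+O_P(n^{-1/2})=O_P(n^{-1/2})=O_P(n^{-4\gamma/(4\gamma+1)})$; no genuine obstacle arises beyond this routine case-checking.
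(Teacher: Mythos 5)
Your proposal is correct and takes essentially the route the paper intends (the paper omits the proof as an easy consequence of Theorem \ref{thm1}): plug the bandwidth into the bias bound (\ref{r1}) and the remainder bound (\ref{r2}), use the CLT (\ref{r3}) together with Slutsky for (\ref{clt1}), and finish (\ref{cltr1}) by the delta method, with $K\ge 0$ guaranteeing $\frac1n+T_n(h_n)>0$. One minor arithmetic slip: with $h_n\asymp n^{-2/(4\gamma+1)}$ the first summand in (\ref{r2}) is $n^{-2}h_n^{-1}= n^{-8\gamma/(4\gamma+1)}$, which is larger than $n^{-1-4\gamma/(4\gamma+1)}$, so the remainder term is only $O_P\big(n^{-4\gamma/(4\gamma+1)}\big)$ rather than $o_P$ of that rate (and the $\eta_{n,\gamma}$ bound gives $o_P(n^{-1/2})$ only after noting $\eta_{n,\gamma}=o(n)$ by Ces\`aro, not merely $O(n)$); this is still sufficient for (\ref{opt1}) and immaterial for (\ref{clt1}), since there you correctly invoke (\ref{r3}) directly, which makes the remainder control redundant.
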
 

\begin{corollary} \label{corollary2}
Under the assumptions in Theorem \ref{thm2}. Let $h_n$ have order $n^{-\frac{1}{2\gamma+1}}$. If $0 < \gamma \le 1/2$,  then 
\begin{align} \label{opt2}
T_n(h_n)-\int_{\R} f^2(x)\,dx=O_P(n^{-\frac{2\gamma}{2\gamma+1}});
\end{align}
if $1/2<\gamma\leq 1$, then
\begin{align} \label{clt2}
\sqrt{n}\, \Big[T_n(h_n)-\int_{\R} f^2(x)\,dx\Big]\overset{\mathcal{L}}{\longrightarrow} N(0,4\sigma^2),
\end{align}
and, if we additional require $K(x)\geq 0$ for all $x$,  the estimator  $-\ln (\frac{1}{n}+T_n(h_n))$ of the quadratic R\'{e}nyi entropy  $R(f)=-\ln (\int_{\R} f^2(x)\,dx)$ satisfies 
\begin{align} \label{cltr2}
\sqrt{n}\, \Big[-\ln (\frac{1}{n}+T_n(h_n))-R(f)\Big]\overset{\mathcal{L}}{\longrightarrow} N\left(0,\frac{4\sigma^2}{(\int_{\R} f^2(x)\,dx)^2}\right)
\end{align}
for some $\sigma^2\in(0,\infty)$.
\end{corollary}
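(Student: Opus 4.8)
The plan is to derive all three conclusions of the corollary directly from the estimates \eqref{r4}, \eqref{r5}, \eqref{r6} in Theorem \ref{thm2} by the specified choice $h_n\asymp n^{-1/(2\gamma+1)}$, handling the two regimes for $\gamma$ separately and then applying the delta method for the R\'enyi entropy statement. First I would substitute $h_n\asymp n^{-1/(2\gamma+1)}$ into the bias bound \eqref{r4}: since $h_n^{2\gamma}\asymp n^{-2\gamma/(2\gamma+1)}$ dominates $1/n$ whenever $2\gamma/(2\gamma+1)\le 1$ (always true), the bias is $O(n^{-2\gamma/(2\gamma+1)})$. For the stochastic part, with the same $h_n$ the right side of \eqref{r5} becomes a constant times $n^{-1}h_n^{-1}+n^{-1/2}h_n^{\gamma}\asymp n^{-2\gamma/(2\gamma+1)}+n^{-1/2-\gamma/(2\gamma+1)}$, and one checks that the first term dominates (equivalently $1/(2\gamma+1)\le 1/2+\gamma/(2\gamma+1)$, i.e. $1/2\le 1$), so $\E\big|T_n(h_n)-\E T_n(h_n)-\tfrac1n\sum_i Y_i\big|=O(n^{-2\gamma/(2\gamma+1)})$.

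In the regime $0<\gamma\le 1/2$ I would then combine the bias bound, the $L^1$ bound just obtained, and the trivial $O_P(n^{-1/2})$ size of $\frac1n\sum_i Y_i$ (which has mean zero and, by stationarity with $Y_i$ bounded since $f$ is bounded, variance $O(1/n)$): writing $T_n(h_n)-\int f^2 = [\E T_n-\int f^2] + [T_n-\E T_n-\tfrac1n\sum Y_i] + \tfrac1n\sum Y_i$, the first two pieces are $O(n^{-2\gamma/(2\gamma+1)})$ and the last is $O_P(n^{-1/2})=O_P(n^{-2\gamma/(2\gamma+1)})$ because $2\gamma/(2\gamma+1)\le 1/2$ exactly when $\gamma\le 1/2$. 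This gives \eqref{opt2}. In the regime $1/2<\gamma\le 1$, the condition $\sqrt n\,h_n\to\infty$ required for \eqref{r6} holds since $\sqrt n\,h_n\asymp n^{1/2-1/(2\gamma+1)}=n^{(2\gamma-1)/(2(2\gamma+1))}\to\infty$; moreover now $2\gamma/(2\gamma+1)>1/2$, so $\sqrt n$ times the bias and $\sqrt n$ times the $L^1$ remainder both tend to $0$. Hence $\sqrt n\,[T_n(h_n)-\int f^2] = \sqrt n\,[T_n-\E T_n] + o_P(1)$, and \eqref{r6} gives \eqref{clt2}; positivity $\sigma^2\in(0,\infty)$ is inherited from Theorem \ref{thm2}.

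For the R\'enyi entropy statement \eqref{cltr2}, I would argue as follows. When $K\ge 0$ the estimator $T_n(h_n)$ is nonnegative, so $\frac1n+T_n(h_n)>0$ and $-\ln(\frac1n+T_n(h_n))$ is well defined; moreover $\frac1n+T_n(h_n)\to\int f^2>0$ in probability (using \eqref{clt2} or just consistency, noting $\int f^2>0$ since $f$ is a density). Apply the delta method to the $C^1$ function $g(t)=-\ln t$ at $t_0=\int f^2$, with $g'(t_0)=-1/\int f^2$: since $\sqrt n\,[(\frac1n+T_n(h_n))-\int f^2]=\sqrt n\,[T_n(h_n)-\int f^2]+n^{-1/2}\to N(0,4\sigma^2)$ by \eqref{clt2} (the $n^{-1/2}$ shift is negligible), we get $\sqrt n\,[-\ln(\frac1n+T_n(h_n))-R(f)]\to N\big(0, (g'(t_0))^2\cdot 4\sigma^2\big)=N\big(0, 4\sigma^2/(\int f^2)^2\big)$, which is \eqref{cltr2}.

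The only mild obstacle is the bookkeeping that in the regime $0<\gamma\le1/2$ the term $\frac1n\sum_i Y_i$ is genuinely of the same order (or larger) than the bias/remainder, so one must be careful to claim only $O_P$ (not a CLT) there; and one must verify $\Var(\frac1n\sum_i Y_i)=O(1/n)$, which follows from short memory in the sense of Definition \ref{rmk5} together with boundedness of $f$ (so $Y_i$ is bounded), exactly the ingredients already used to define $\sigma^2$ in Theorem \ref{thm2}. Everything else is direct substitution and the delta method. \qed
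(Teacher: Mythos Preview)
Your proposal is correct and matches what the paper intends: the paper does not give an explicit proof of Corollary~\ref{corollary2}, merely stating that it (together with Corollary~\ref{corollary1}) is obtained ``as consequences of Theorem~\ref{thm1} and Theorem~\ref{thm2}''. Your substitution of $h_n\asymp n^{-1/(2\gamma+1)}$ into \eqref{r4}--\eqref{r6}, the decomposition $T_n-\int f^2=[\E T_n-\int f^2]+[T_n-\E T_n-\tfrac1n\sum Y_i]+\tfrac1n\sum Y_i$, the check that $\sqrt{n}\,h_n\to\infty$ precisely when $\gamma>1/2$, and the delta method for \eqref{cltr2} are exactly the routine verifications the authors have in mind; the only ingredient you invoke beyond Theorem~\ref{thm2} itself, namely $\Var(n^{-1}\sum Y_i)=O(1/n)$, is established in Step~5 of the proof of Theorem~\ref{thm1} (which carries over verbatim to the setting of Theorem~\ref{thm2}).
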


\bigskip

\section{Examples}\label{example}

In this section, we shall give a few examples of linear processes to illustrate our results.

\begin{example}[Linear process with Gaussian white noise innovations] Here the distribution of $\varepsilon_1$ is Gaussian with mean $0$ and variance $\sigma^2$. Note that $\E[e^{\iota\lambda \varepsilon_1}]=\exp(-\frac{\lambda^2}{2\sigma^2})$. So $\int_{\R}\lambda^2 |\phi_{\varepsilon}(\lambda)|^2\, d\lambda<\infty$, $\E|e^{\iota \lambda \varepsilon_1}-\phi_{\varepsilon}(\lambda)|^{4}\leq c_{1,4} \left(\lambda^4\wedge 1\right)$ and $\E|e^{\iota \lambda \varepsilon_1}-\phi_{\varepsilon}(\lambda)|^{2}\leq c_{1,2} \left(\lambda^{2}\wedge 1\right)$. Therefore, Theorems \ref{thm1}, \ref{thm2} and Corollaries \ref{corollary1} and \ref{corollary2} hold for all short memory linear processes with Gaussian white noise innovations.\end{example}

\begin{example}\label{example2}{\bf (Linear process with symmetric $\alpha$-stable innovations)} Here the distribution of $\varepsilon_1$ is the symmetric $\alpha$-stable (S$\alpha$S) with $0<\alpha<2$. Note that $\E[e^{\iota\lambda \varepsilon_1}]=\exp(-c_{\alpha}|\lambda|^\alpha)$ for some positive constant $c_{\alpha}$. So $\int_{\R}\lambda^2 |\phi_{\varepsilon}(\lambda)|^2\, d\lambda<\infty$, $\E|e^{\iota \lambda \varepsilon_1}-\phi_{\varepsilon}(\lambda)|^{4}\leq c_{\frac{\alpha}{4},4} \left(|\lambda|^{\alpha}\wedge 1\right)$ and $\E|e^{\iota \lambda \varepsilon_1}-\phi_{\varepsilon}(\lambda)|^{2}\leq c_{\frac{\alpha}{2},2} \left(|\lambda|^{\alpha}\wedge 1\right)$. Therefore, Theorems \ref{thm1} and \ref{thm2} hold for all short memory linear processes having symmetric $\alpha$-stable innovations ($0<\alpha<2$) with $\gamma=\frac{\alpha}{4}$ and $\gamma=\frac{\alpha}{2}$, respectively.  
The central limit theorem (\ref{clt1}) in Corollary \ref{corollary1} and the central limit theorem (\ref{clt2}) in Corollary \ref{corollary2} hold if $1<\alpha<2$. We should apply Theorem \ref{thm2} and Corollary \ref{corollary2} instead of Theorem \ref{thm1} and Corollary \ref{corollary1}  in order to study all short-memory processes with $\alpha$-stable innovations and coefficients satisfying $\sum_i |a_i|^{\alpha/2}<\infty$. See Step 2 of Section \ref{simulation} for the detailed discussion.  
\end{example}

\begin{example}[Causal $ARMA(p, q)$ process]  An $ARMA(p, q)$ model has the form 
\[
\phi(B)X_t=\theta(B)\varepsilon_t,
\]
where $\varepsilon_t$ are i.i.d random variables with $\E \varepsilon_t=0$ and $\E \varepsilon^2_t=\sigma^2_{\varepsilon}$, $\phi$ and $\theta$ are two polynomials with no common zeros,
\begin{align*}
\phi(z)&=1-\phi_1 z-\phi_2 z^2-\cdots-\phi_p z^p, \;\; \phi_p\ne 0,\\
\theta(z)&=1+\theta_1 z+\theta_2 z^2+\cdots-\theta_q z^q, \;\;\theta_q\ne 0,
\end{align*}
and $B$ is the back shift operator with $B^k X_t=X_{t-k}$.
An $ARMA(p, q)$ process is causal if it can be written as a one-sided linear process $X_t=\sum^\infty\limits_{i=0} a_i\varepsilon_{t-i}$ with $\sum^\infty\limits_{i=0} |a_i|<\infty$. It is causal if and only if $\phi(z)\ne 0$ for $|z|\le 1$. In this case, the coefficients of the linear process can be determined by solving 
\[
a(z)=\sum^\infty\limits_{i=0} a_i z^i=\frac{\phi(z)}{\theta(z)},
\] 
where $|z|\le 1$.  It is easy to see that the coefficients $a_i$ decay exponentially fast to zero. So $\sum\limits^{\infty}_{\ell=0}\sum\limits^{\infty}_{i=\ell} |a_i|^{\gamma}<\infty$ for any $\gamma>0$. This means that we can choose $\gamma$ as small as possible, which will enlarge the choices of the innovations $\varepsilon$. Moreover, the upper bound in (\ref{r2}) of Theorem \ref{thm1} can be improved to a constant multiple of $\frac{1}{n^2h_n}+\frac{h^{2\gamma}_n}{n}$.

\end{example}

\bigskip

\section{An application to $L^2_2$ divergence}\label{application}

Let $f(x)$ and $g(x)$ be the probability densities for independent linear processes $X_n=\sum\limits_{i=0}^\infty a_i \varepsilon_{n-i}$ and $Y_n=\sum\limits_{j=0}^\infty b_j\xi_{n-j}$ defined as in $(\ref{lp})$, respectively. Then the $L_2^2$ divergence between $f$ and $g$ is defined as 
\[
D(f,g):=\int_{\R} (f(x)-g(x))^2\, dx=\int_{\R} f^2(x)\, dx+\int_{\R} g^2(x)\, dx-2\int_{\R} f(x)g(x)\, dx.
\]
The estimator for $D(f,g)$ is 
\[
\widehat{D}(f,g)=T_n(f, h_n)+T_n(g, h_n)-2T_n(f, g, h_n),
\] 
where 
\[
T_n(f, h_n)=\frac{2}{n(n-1)h_n} \sum_{1\le i<j\le n}K\left(\frac{X_i-X_j}{h_n}\right),
\]
\[
T_n(g, h_n)=\frac{2}{n(n-1)h_n} \sum_{1\le i<j\le n}K\left(\frac{Y_i-Y_j}{h_n}\right)
\]
and 
\[
T_n(f, g, h_n)=\frac{1}{n^2h_n} \sum_{1\le i, j\le n}K\left(\frac{X_i-Y_j}{h_n}\right).
\]
Krishnamurthy et. al. (2015) studied the estimation of $L_2^2$ divergence for multivariate independent i.i.d. samples $\{X_i\}_{i=1}^{2n}\sim f(x)$ and $\{Y_i\}_{i=1}^{2n}\sim g(x)$. They used data splitting in the estimation which is necessary to obtain the asymptotic normality [see Gretton et. al. (2012);  Krishnamurthy et. al. (2015)].  However, in our case, the data splitting is not needed. 

Using similar arguments as in the proof of Theorems \ref{thm1}  and \ref{thm2}, one can obtain the same results for $T_n(f, g, h_n)$. Together with Theorems \ref{thm1} and \ref{thm2}, we then have the following results. 
\begin{theorem} \label{thm3} 
Assume that the  independent linear processes $\{X_n\}$ and $\{Y_n\}$ satisfy the same conditions as in Theorem \ref{thm1}. Then there exist positive constants $c_1$ and $c_2$ such that
\begin{align} \label{r1'}
\Big|\E \widehat{D}(f,g)-D(f,g)\Big|\leq c_1 \left(\frac{1}{n}+ h^{2\gamma}_n\right),
\end{align}
\begin{align}  \label{r2'}
\E\Big|\widehat{D}(f,g)-\E \widehat{D}(f,g)-\frac{1}{n}\sum^n_{i=1}W_i\Big|^2 \leq c_2 \Big(\frac{1}{n^2h_n}+\frac{\eta_{n,\gamma}}{n^2}+\frac{h^{2\gamma}_n}{n}\Big)
\end{align}
and, if additionally $n h_n\to \infty$ as $n\to\infty$,
\begin{align} \label{r3'}
\sqrt{n}\, \Big[\widehat{D}(f,g)-\E \widehat{D}(f,g)\Big]\overset{\mathcal{L}}{\longrightarrow} N(0,4\sigma^2),
\end{align}
where 
\[
W_i=2\Big(f(X_i)-\int_{\R} f^2(x)\, dx+g(Y_i)-\int_{\R} g^2(x)\, dx+g(X_{i})+f(Y_{i})-2\int_{\R} f(x)g(x)\, dx\Big),
\] 
$\eta_{n,\gamma}=\sum\limits_{\ell=0}^n\sum\limits_{i=\ell}^\infty |a_i|^\gamma\vee \sum\limits_{\ell=0}^n\sum\limits_{i=\ell}^\infty |b_i|^\gamma$, and $\sigma^2=\lim\limits_{n\to\infty} n^{-1}\Var(S_n)$ with $S_n=\frac{1}{2}\sum\limits^n_{i=1}W_i$.  
\end{theorem}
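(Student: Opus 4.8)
The strategy is to split $\widehat D(f,g)=T_n(f,h_n)+T_n(g,h_n)-2T_n(f,g,h_n)$ and treat the three pieces separately. Theorem~\ref{thm1} applies verbatim to $T_n(f,h_n)$ (with innovations $\varepsilon$) and to $T_n(g,h_n)$ (with innovations $\xi$), since $\{X_n\}$ and $\{Y_n\}$ are assumed to satisfy exactly its hypotheses. So the whole statement reduces to proving analogues of (\ref{r1})--(\ref{r3}) for the cross term $T_n(f,g,h_n)$ and then adding the three expansions: the deterministic parts will give (\ref{r1'}), the $L^2$-remainders will give (\ref{r2'}), and the linear parts will combine --- tracking the factor $-2$ on the cross term --- into $\frac1n\sum_iW_i$, whence (\ref{r3'}) follows from a central limit theorem for $S_n=\frac12\sum_iW_i$.

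For the cross term I would start from the Fourier identity $\frac1{h_n}K\big(\frac{x-y}{h_n}\big)=\frac1{2\pi}\int_\R\widehat K(h_nu)\,e^{-\iota u x}e^{\iota u y}\,du$, which yields
\[
T_n(f,g,h_n)=\frac{1}{2\pi}\int_\R\widehat K(h_nu)\,\overline{\phi_n(u)}\,\phi_{n,g}(u)\,du,\qquad \phi_{n,g}(u)=\frac1n\sum_{i=1}^ne^{\iota u Y_i}.
\]
Writing $\phi_n=\phi+D_n$ and $\phi_{n,g}=\phi_g+D_{n,g}$, with $\phi_g(u)=\E[e^{\iota u Y_1}]$, $D_n(u)=\frac1n\sum_iH(X_i)(u)$ and $D_{n,g}(u)=\frac1n\sum_i(e^{\iota uY_i}-\phi_g(u))$, and expanding the bilinear integrand into four products gives a Hoeffding-type decomposition of $T_n(f,g,h_n)$: a deterministic term $\frac1{2\pi}\int\widehat K(h_nu)\overline{\phi(u)}\phi_g(u)\,du$; two linear terms $\frac1{2\pi}\int\widehat K(h_nu)\overline{D_n(u)}\phi_g(u)\,du$ and $\frac1{2\pi}\int\widehat K(h_nu)\overline{\phi(u)}D_{n,g}(u)\,du$; and a doubly-centered remainder $R_n=\frac1{2\pi}\int\widehat K(h_nu)\overline{D_n(u)}D_{n,g}(u)\,du$. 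For the deterministic term, Plancherel's identity, $\widehat K(0)=1$, the elementary bound $|\widehat K(h_nu)-1|\le c\,|h_nu|^{2\gamma}$ for $\gamma\in(0,1]$ (a consequence of the symmetry of $K$ and $\int u^2|K(u)|\,du<\infty$), and the smoothness hypotheses $\int|\lambda|^{2\gamma}|\phi_\varepsilon(\lambda)|^2\,d\lambda<\infty$, $\int|\lambda|^{2\gamma}|\phi_\xi(\lambda)|^2\,d\lambda<\infty$ give $\big|\,\text{(deterministic term)}-\int_\R fg\,\big|\le c\,h_n^{2\gamma}$; together with (\ref{r1}) for $T_n(f,h_n)$ and $T_n(g,h_n)$ this yields (\ref{r1'}). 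For the two linear terms, replacing $\widehat K(h_nu)$ by $1$ turns them (via Fourier inversion and Plancherel) into $\frac1n\sum_i\big(g(X_i)-\int_\R fg\big)$ and $\frac1n\sum_i\big(f(Y_i)-\int_\R fg\big)$, and the $L^2$-error of this replacement is of order $h_n^\gamma/\sqrt n$ by the same argument as in the proof of Theorem~\ref{thm1}, now applied to the functions $g$, $f$ and the processes $\{X_n\}$, $\{Y_n\}$; this is where $\sum|a_i|^\gamma<\infty$, $\sum|b_i|^\gamma<\infty$ and the $\eta_{n,\gamma}/n^2$ and $h_n^{2\gamma}/n$ terms in (\ref{r2'}) enter.

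The main point --- and the main obstacle --- is the remainder $R_n$. Since $\{X_n\}$ and $\{Y_n\}$ are independent,
\[
\E R_n^2=\frac{1}{4\pi^2}\iint_{\R^2}\widehat K(h_nu)\widehat K(h_nv)\,\E\!\big[\overline{D_n(u)}D_n(v)\big]\,\E\!\big[D_{n,g}(u)\overline{D_{n,g}(v)}\big]\,du\,dv,
\]
and each factor $\E[\overline{D_n(u)}D_n(v)]=\frac1{n^2}\sum_{i,i'}\E[\overline{H(X_i)(u)}H(X_{i'})(v)]$ (and its $\{Y_n\}$-analogue) is precisely the object controlled in the proofs of Theorems~\ref{thm1} and \ref{thm2}: the diagonal $i=i'$ contributes, through the direct estimate $\frac1{n^2}\E[\widetilde\psi(X_1,Y_1)^2]\le\frac1{n^2}\E\big[h_n^{-2}K^2\big(\tfrac{X_1-Y_1}{h_n}\big)\big]\le c\,\|f\|_\infty\|K\|_2^2/(n^2h_n)$ (which uses only $K\in L^2$, a consequence of the kernel assumptions, not $\widehat K\in L^1$), the $1/(n^2h_n)$ term, while the off-diagonal part is bounded using $\mathrm{Var}(e^{\iota\lambda a_k\varepsilon_1})=1-|\phi_\varepsilon(a_k\lambda)|^2$ and the summability $\sum|a_i|^\gamma<\infty$, yielding the $\eta_{n,\gamma}/n^2$ term. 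Hence $\E R_n^2\le c\big(1/(n^2h_n)+\eta_{n,\gamma}/n^2\big)$. Note that, by independence, each factor in $\E R_n^2$ involves only second moments of the $H$'s of a single process, so no fourth-moment hypothesis on the innovations is needed for $R_n$ itself --- assumption (\ref{char4moment}) is still required, but only for the companion one-sample terms $T_n(f,h_n)$ and $T_n(g,h_n)$.

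Collecting everything: (\ref{r1'}) is the sum of the three bias bounds; (\ref{r2'}) follows because $\widehat D-\E\widehat D-\frac1n\sum_iW_i$ is the sum of the two one-sample remainders supplied by Theorem~\ref{thm1} and $-2R_n$, each of order $\frac1{n^2h_n}+\frac{\eta_{n,\gamma}}{n^2}+\frac{h_n^{2\gamma}}{n}$ in $L^2$, once one checks that the linear parts of the three expansions --- $\frac2n\sum_i(f(X_i)-\int_\R f^2)$ from $T_n(f,h_n)$, $\frac2n\sum_i(g(Y_i)-\int_\R g^2)$ from $T_n(g,h_n)$, and $-\frac2n\sum_i(g(X_i)-\int_\R fg)-\frac2n\sum_i(f(Y_i)-\int_\R fg)$ from $-2T_n(f,g,h_n)$ --- add up to $\frac1n\sum_iW_i$; and (\ref{r3'}) follows from $\sqrt n(\widehat D-\E\widehat D)=\frac1{\sqrt n}\sum_iW_i+o_P(1)$ together with the central limit theorem for $\frac1{\sqrt n}\sum_iW_i=\frac2{\sqrt n}S_n$, a normalized partial sum of a short-memory functional of the pair of independent linear processes, whose variance converges to $4\sigma^2$ with $\sigma^2=\lim_n n^{-1}\mathrm{Var}(S_n)$. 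Apart from this bookkeeping, the only genuine difficulty is verifying that the bilinear remainder $R_n$ inherits the Theorem~\ref{thm1} bounds; the independence of $\{X_n\}$ and $\{Y_n\}$ is exactly what reduces the required estimate to two copies of the one-sample analysis, and in fact makes it somewhat easier than the genuinely degenerate $U$-statistic remainder in the one-sample case.
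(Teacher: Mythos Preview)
Your proposal is correct and follows exactly the route the paper itself indicates: the paper's ``proof'' of Theorem~\ref{thm3} consists of a single sentence saying that the same arguments as in Theorems~\ref{thm1} and~\ref{thm2} yield the analogous bounds for the cross term $T_n(f,g,h_n)$, after which one combines the three pieces. Your Fourier/Hoeffding decomposition of $T_n(f,g,h_n)$, the Plancherel bias computation, the replacement of $\widehat K(h_nu)$ by $1$ in the linear pieces, and the projection-type control of the doubly-centered remainder are precisely the ``similar arguments'' the paper has in mind; your additional observation that the independence of $\{X_n\}$ and $\{Y_n\}$ factorizes $\E R_n^2$ and thereby reduces the cross-remainder to second-moment estimates (so that (\ref{char4moment}) is invoked only through the one-sample terms) is a correct refinement the paper does not make explicit. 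One bookkeeping point to double-check when you write it up: the signs you compute for the linear contributions from $-2T_n(f,g,h_n)$ give $-g(X_i)-f(Y_i)+2\int fg$, which disagrees with the signs displayed in the paper's formula for $W_i$; your derivation is the correct one (the influence function of $D(f,g)$ is $2[(f-g)(X_i)-(f-g)(Y_i)]$), so this appears to be a typographical slip in the statement rather than an error in your argument.
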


Let $h_n$ have order $n^{-\frac{2}{4\gamma+1}}$. Then similar results as in Corollary \ref{corollary1} can be obtained for $\widehat{D}(f,g)$.

\begin{theorem} \label{thm4} 
With the same notations as in Theorem \ref{thm3}, assume that the independent linear processes $\{X_n\}$ and $\{Y_n\}$ satisfy the same conditions as in Theorem \ref{thm2}. Then there exists a positive constant $c$ such that 
\begin{align}  
\E\Big|\widehat{D}(f,g)-\E \widehat{D}(f,g)-\frac{1}{n}\sum^n_{i=1}W_i\Big| \leq c \Big(\frac{1}{nh_n}+\frac{h^{\gamma}_n}{\sqrt{n}}\Big)
\end{align}
and (\ref{r1'}), (\ref{r3'}) hold for some $\sigma^2\in(0,\infty)$. 
\end{theorem}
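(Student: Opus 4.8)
The plan is to handle the three pieces of $\widehat D(f,g)=T_n(f,h_n)+T_n(g,h_n)-2T_n(f,g,h_n)$ separately. Since $\{X_n\}$ and $\{Y_n\}$ both satisfy the hypotheses of Theorem \ref{thm2}, the bias bound (\ref{r4}), the $L^1$ linearization (\ref{r5}) and the central limit theorem (\ref{r6}) apply to $T_n(f,h_n)$ with influence function $2(f(X_i)-\int_\R f^2)$ and to $T_n(g,h_n)$ with influence function $2(g(Y_i)-\int_\R g^2)$. Thus it suffices to prove the three analogous statements for the cross term $T_n(f,g,h_n)$: the bias estimate $\big|\E T_n(f,g,h_n)-\int_\R f(x)g(x)\,dx\big|\le c(1/n+h_n^{2\gamma})$; the $L^1$ linearization with remainder $O\big(1/(nh_n)+h_n^{\gamma}/\sqrt n\big)$ and influence function $g(X_i)-\int_\R fg$ from the $X$-sample and $f(Y_i)-\int_\R fg$ from the $Y$-sample; and, when $\sqrt n\,h_n\to\infty$, asymptotic normality of $\sqrt n\,[T_n(f,g,h_n)-\E T_n(f,g,h_n)]$. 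The three conclusions of the theorem then follow by summing (the triangle inequality for (\ref{r1'}) and for the $L^1$ bound, with $W_i$ the corresponding linear combination of the three influence functions) and by Slutsky's lemma for (\ref{r3'}).

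For the cross term I would use the Fourier representation of Section \ref{proof}. With $K(u)=\frac1{2\pi}\int_\R\widehat K(\lambda)e^{-\iota\lambda u}\,d\lambda$ (valid since $\int_\R|\widehat K(\lambda)|\,d\lambda<\infty$) and the change of variables $\lambda\mapsto h_n\lambda$,
\[
T_n(f,g,h_n)=\frac1{2\pi}\int_\R\widehat K(h_n\lambda)\,\overline{\phi_{n,X}(\lambda)}\,\phi_{n,Y}(\lambda)\,d\lambda,
\]
where $\phi_{n,X}(\lambda)=n^{-1}\sum_i e^{\iota\lambda X_i}$ and $\phi_{n,Y}(\lambda)=n^{-1}\sum_j e^{\iota\lambda Y_j}$ are the empirical characteristic functions of the two samples. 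Writing $\phi_{n,X}=\phi_X+n^{-1}\sum_i H_X(X_i)$ and $\phi_{n,Y}=\phi_Y+n^{-1}\sum_j H_Y(Y_j)$, with $H_X(X_i)(\lambda)=e^{\iota\lambda X_i}-\phi_X(\lambda)$ and similarly $H_Y$, and expanding the product yields a Hoeffding--type decomposition into four terms. The deterministic term $\frac1{2\pi}\int\widehat K(h_n\lambda)\overline{\phi_X(\lambda)}\phi_Y(\lambda)\,d\lambda$ equals $\int_\R (K_{h_n}*g)(x)f(x)\,dx$ by Parseval (here $K_{h_n}(u)=h_n^{-1}K(u/h_n)$ and $*$ is convolution), hence converges to $\int_\R fg$ with error $O(h_n^{2\gamma})$, using $|\widehat K(h_n\lambda)-1|\le c\,|h_n\lambda|^{2\gamma}$ (from $\int u^2|K(u)|\,du<\infty$ and $K$ even) and $\int|\lambda|^{2\gamma}|\phi_X(\lambda)\phi_Y(\lambda)|\,d\lambda<\infty$ (Cauchy--Schwarz together with $\int|\lambda|^{2\gamma}|\phi_\varepsilon(\lambda)|^2\,d\lambda<\infty$). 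The $X$-linear term is $\frac1n\sum_i\big[(K_{h_n}*g)(X_i)-\E[(K_{h_n}*g)(X_1)]\big]$, which is approximated by $\frac1n\sum_i(g(X_i)-\int_\R fg)$ with $L^1$ error at most $\frac c{\sqrt n}\|K_{h_n}*g-g\|_2\le\frac{c\,h_n^{\gamma}}{\sqrt n}$, the last step again from the smoothness input for $g$ and short memory of $\{X_n\}$; the $Y$-linear term is symmetric. Finally the bilinear term $\frac1{2\pi}\int\widehat K(h_n\lambda)\,\overline{\big(n^{-1}\sum_i H_X(X_i)(\lambda)\big)}\,\big(n^{-1}\sum_j H_Y(Y_j)(\lambda)\big)\,d\lambda$ has, by independence of $\{X_n\}$ and $\{Y_n\}$,
\[
\text{($L^2$ norm of the bilinear term)}\ \le\ \frac1{2\pi}\int_\R|\widehat K(h_n\lambda)|\,\Big\|\tfrac1n\sum_i H_X(X_i)(\lambda)\Big\|_2\Big\|\tfrac1n\sum_j H_Y(Y_j)(\lambda)\Big\|_2\,d\lambda\ \le\ \frac{c}{nh_n},
\]
using the short-memory $L^2$ bounds on the empirical characteristic function of a linear process from Section \ref{proof} (each factor $\le c\,n^{-1/2}$) and $\int|\widehat K(h_n\lambda)|\,d\lambda=O(1/h_n)$.

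Assembling these estimates, summing the three biases gives (\ref{r1'}); writing $\widehat D(f,g)-\E\widehat D(f,g)-\frac1n\sum_i W_i$ as the sum of the three remainders, each $\le c\,(1/(nh_n)+h_n^{\gamma}/\sqrt n)$, gives the $L^1$ bound by the triangle inequality. For (\ref{r3'}), when $\sqrt n\,h_n\to\infty$ the quantity $1/(nh_n)+h_n^{\gamma}/\sqrt n$ is $o(n^{-1/2})$, so $\sqrt n\,[\widehat D(f,g)-\E\widehat D(f,g)]=n^{-1/2}\sum_i W_i+o_P(1)$. Since $\{X_n\}$ and $\{Y_n\}$ are independent, $W_i$ splits as a bounded function of $X_i$ alone plus a bounded function of $Y_i$ alone (both centered), so $\sum_i W_i$ is a sum of two independent partial sums of bounded additive functionals of short-memory linear processes; each obeys the central limit theorem already invoked for $S_n$ in Theorems \ref{thm1}--\ref{thm2}, and hence so does their sum, giving $N(0,4\sigma^2)$ with $\sigma^2=\lim_n n^{-1}\Var(S_n)$, $S_n=\frac12\sum_i W_i$. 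Finiteness of $\sigma^2$ follows from short memory and the boundedness of $f$ and $g$, and strict positivity from the non-degeneracy of the influence function, exactly as in Gin\'e and Nickl (2008).

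The main obstacle is the bilinear term together with the $L^1$ (rather than $L^2$) sharpening of the linear terms: under only the second-moment characteristic-function assumption (\ref{char2moment}) one cannot use the coarser $L^2$ splitting available under (\ref{char4moment}), and must instead reproduce the finer $L^1$ estimates from the proof of Theorem \ref{thm2}, carefully balancing the Fourier tail $\int|\widehat K(h_n\lambda)|\,d\lambda=O(1/h_n)$ against the smoothness $\int|\lambda|^{2\gamma}|\phi_\varepsilon(\lambda)|^2\,d\lambda<\infty$ and the short-memory summability $\sum_i|a_i|^{\gamma}<\infty$, $\sum_j|b_j|^{\gamma}<\infty$ of the two processes. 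A secondary technical point is to verify $\sigma^2\in(0,\infty)$, especially its strict positivity for the joint influence function $W_i$.
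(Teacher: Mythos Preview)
Your proposal is correct and follows exactly the route the paper indicates (the paper gives no separate proof of Theorem \ref{thm4}; it simply says ``using similar arguments as in the proof of Theorems \ref{thm1} and \ref{thm2}, one can obtain the same results for $T_n(f,g,h_n)$''). You decompose $\widehat D(f,g)$ into its three kernel statistics, invoke Theorem \ref{thm2} for $T_n(f,h_n)$ and $T_n(g,h_n)$, and treat the cross term via the same Fourier/Hoeffding expansion used in Section \ref{proof}, bounding the bilinear piece with Lemma \ref{lma2} and the linear pieces as in Step 4; this is precisely what the paper has in mind.

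One small imprecision worth tightening: the factors $\big\|n^{-1}\sum_i H_X(X_i)(\lambda)\big\|_2$ and $\big\|n^{-1}\sum_j H_Y(Y_j)(\lambda)\big\|_2$ are \emph{not} bounded by $c\,n^{-1/2}$ uniformly in $\lambda$; Lemma \ref{lma2} gives the $\lambda$-dependent bound $c\,n^{-1/2}\big(m+|\lambda|^{\gamma}\prod_{j<m}|\phi_{\varepsilon}(\lambda a_j)|\big)$. Your final estimate $c/(nh_n)$ is nevertheless correct: use $\|A\|_2\|B\|_2\le\tfrac12(\|A\|_2^2+\|B\|_2^2)$ and then integrate against $|\widehat K(h_n\lambda)|$ exactly as in the displayed bound at the end of the proof of Theorem \ref{thm2}. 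Also note that, by independence of the two samples, the bias of $T_n(f,g,h_n)$ is actually $O(h_n^{2\gamma})$ with no $1/n$ contribution (the analogue of the term coming from Lemma \ref{lma1} vanishes), so your stated bound $c(1/n+h_n^{2\gamma})$ is valid but not sharp there.
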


Let $h_n$ have order $n^{-\frac{1}{2\gamma+1}}$. Then similar results as in Corollary \ref{corollary2} can be obtained for $\widehat{D}(f,g)$.
\bigskip

\section{Simulation study and real data analysis}\label{simulation}

In this section we first conduct a simulation study to verify the central limit theorems in Section \ref{main} for the kernel entropy estimator of linear processes with different coefficient sequences and innovations with stable or Gaussian laws.  For this purpose, we shall draw normal quantile plots and histograms, perform hypothesis tests and provide confidence intervals on the means. In the second part of this section, we apply the kernel method to study the divergences between each pair of the annual average flow distributions of four rivers on the earth. 

The simulation study has two steps. 

{\bf Step 1}: Calculate or approximate the true value of the quadratic functional $\int_\mathbb{R} f^2(x) dx$.

By the inverse Fourier transform, the probability density function $f(x)$ of the linear process in (\ref{lp}) and its characteristic function $\phi(t)=\mathbb{E}[e^{\iota tX_1}]$ have the following relationship:
\begin{align}\label{inverseF}
f(x)=\frac{1}{2\pi}\int_{\mathbb{R}}\phi(t)e^{-\iota tx}dt.
\end{align}
Also, we have 
\begin{align*}
\phi(t)=\mathbb{E}[e^{\iota tX_n}]=\prod_{i=0}^\infty\mathbb{E}[e^{\iota t a_i \varepsilon_{n-i}}]. 
\end{align*}
Now let  $a_i=\frac{\Gamma(i+d)}{\Gamma(d)\Gamma(i+1)}$, $i\ge 0$, for any $d<\frac{1}{2}, d\ne 0, -1, -2, \cdots$. Applying Stirling's formula, $\Gamma(x)\sim \sqrt{2\pi}e^{-x+1}(x-1)^{x-1/2}$ as $x\rightarrow \infty$, $a_i\sim i^{d-1}/\Gamma(d)$ as $i\rightarrow \infty$. In particular, under the condition $-1<d<\frac{1}{2}, d\ne 0$,  the FARIMA$(0,d,0)$ processes $X_n=(1-B)^{-d}\varepsilon_n$,  where $B$ is the backshift operator, are causal and they have the form of linear process with $a_i=\frac{\Gamma(i+d)}{\Gamma(d)\Gamma(i+1)}$.  See Bondon and Palma (2007) for the extension of causality of FARIMA$(p,d,q)$ processes to the range of $-1<d<1/2$. We shall perform simulation study for linear processes with Gaussian or Cauchy innovations and some selected values of $d$. 

{\it Case 1}. Suppose the innovations $\{\varepsilon_i\}$ are i.i.d. $N(0,1)$ random variables, then $\phi_\varepsilon(t)=e^{-\frac{t^2}{2}}$ and hence
$\phi(t)=e^{-\frac{t^2}{2}\sum_{i=0}^\infty a_i^2}$.   By the Gauss's theorem [Gauss (1866)] for hypergeometric series,  
$\sum_{i=0}^{\infty} a^2_i=\frac{\Gamma(1-2d)}{\Gamma^2(1-d)}$ for any $d<\frac{1}{2}, d\ne 0, -1, -2, \cdots$. See also Bailey (1935) or its direct calculation in Sang and Sang (2017).    
By the formula (\ref{inverseF}) and the application of MATLAB,
\begin{align*}
f(x)=\frac{1}{\pi}\int_{0}^{\infty}e^{-\frac{t^2}{2}\frac{\Gamma(1-2d)}{\Gamma^2(1-d)}} \cos (tx)dt=\frac{\sqrt{2}}{4}e^{-\frac{\pi x^2}{8}}
\end{align*}
if we take $d=-0.5$.
As a result,
\begin{align*}
\int_\mathbb{R} f^2(x)dx=0.2500\;\;\;\text{if $d=-0.5$}.
\end{align*}

In this way, we also calculate or approximate the value of $\int_\mathbb{R} f^2(x)dx$ for other $d$ values as listed in Table \ref{tab1}. 
\begin{table}[H]
\center
\caption{ The approximate value of $\int_\mathbb{R} f^2(x)dx$ for linear processes with Gaussian innovations} 
\label{tab1}
\bigskip
\begin{tabular}{| l | l | l | l | l | l | l | l | l |}
\hline                                                     
\;\;$d$                                          & $-0.1$  & $-0.3$ &  $-0.5$ &  $-0.7$ &  $-0.9$ &  $-1.5$&  $-2.5$ &  $-3.5$\\
\hline

\;\;$\int_\mathbb{R} f^2(x)dx$ & 0.2801 & 0.2678 & 0.2500 & 0.2300 & 0.2095 &0.1531 &  0.0856 & 0.0462 \\
\hline
\end{tabular}  
\end{table}


{\it Case 2}. If the innovations $\{\varepsilon_i\}$ have i.i.d. symmetric $\alpha$-stable distribution with $0<\alpha<2$, then $\phi_\varepsilon(t)=e^{-|t|^\alpha}$ and 
$\phi(t)=e^{-|t|^\alpha\sum_{i=0}^\infty |a_i|^\alpha}$.
In particular, if $\alpha=1$, we have the standard Cauchy distribution with $\phi_\varepsilon(t)=e^{-|t|}$ and 
$\phi(t)=e^{-|t|\sum_{i=0}^\infty |a_i|}$. In this case, with the help of the software {\it Mathematica}, we approximate $\sum_{i=0}^\infty |a_i|$ by  $\sum_{i=0}^{100,000} |a_i|$ and then calculate $f(x)$ and the value of $\int_\mathbb{R} f^2(x)dx$. The result is in Table \ref{tab2}. 
 
 \begin{table}[H]
\center
\caption{ The approximate value of $\int_\mathbb{R} f^2(x)dx$ for linear processes with Cauchy innovations} 
\label{tab2}
\bigskip
\begin{tabular}{| l | l | l | l | l | l |}
\hline                                                     
\;\;$d$                                          & $-0.1$  & $-0.3$ &  $-0.5$ &  $-0.7$ &  $-0.9$ \\
\hline

\;\;$\int_\mathbb{R} f^2(x)dx$ & 0.0934 & 0.0806 & 0.0796 & 0.0796 & 0.0796 \\
\hline
\end{tabular}  
\end{table}

{\bf Step 2}: For each case, linear processes with Gaussian or Cauchy innovations, and each memory parameter $d$, we produce $m=1,000$ linear processes with $n=4,096$ observations $\{X_i\}_{i=1}^n$ in each of them by applying the MATLAB code in Fa\"{y} et. al. (2009) with some modification.  For each generated linear process, we apply the estimator $T_n(h_n)$ as in (\ref{kernel}) to estimate the true quadratic functional $\int_\mathbb{R} f^2(x) dx$. The estimation is realized in MATLAB. Here we choose the normal kernel, that is, $K(x)=\frac{1}{\sqrt{2\pi}}e^{-\frac{x^2}{2}}$, in the estimator $T_n(h_n)$. The other common kernel functions are also applicable since the main theorems and corollaries only assume  some basic conditions on the kernel functions. 

 For the chosen  $a_i=\frac{\Gamma(i+d)}{\Gamma(d)\Gamma(i+1)}$, $i\ge 0$, and for all results in Section \ref{main},  we require $\gamma>\frac{1}{1-d}$ since $\sum_{i=0}^\infty |a_i|^\gamma<\infty$ and $a_i\sim i^{d-1}/\Gamma(d)$.
 
 {\it Case 1}. Suppose the innovations $\varepsilon_i$, $i\in\mathbb{Z}$, are i.i.d. $N(0,1)$ random variables. Then $\E[e^{\iota\lambda \varepsilon_1}]=\exp(-\frac{\lambda^2}{2\sigma^2})$. So $\int_{\R}\lambda^2 |\phi_{\varepsilon}(\lambda)|^2\, d\lambda<\infty$, $\E|e^{\iota \lambda \varepsilon_1}-\phi_{\varepsilon}(\lambda)|^{4}\leq c_{1,4} \left(\lambda^4\wedge 1\right)$ and $\E|e^{\iota \lambda \varepsilon_1}-\phi_{\varepsilon}(\lambda)|^{2}\leq c_{1,2} \left(\lambda^{2}\wedge 1\right)$. 
 In this case, we should apply Theorem \ref{thm1} and Corollary \ref{corollary1} instead of Theorem \ref{thm2} and Corollary \ref{corollary2}. 
 The convergence rate (\ref{opt1}) in Corollary \ref{corollary1} is faster than the convergence rate (\ref{opt2}) in Corollary \ref{corollary2}. The central limit theorem (\ref{clt1}) in Corollary \ref{corollary1} holds for $1/4<\gamma \le 1$ while the central limit theorem (\ref{clt2}) in Corollary \ref{corollary2} holds only  for $1/2<\gamma \le 1$. See also Remark \ref{rmk2}. To have the central limit theorem (\ref{clt1}), we require $\frac{1}{1-d}\vee \frac{1}{4}<\gamma\le 1$ and the bandwidth  $h_n$ to have the order of $n^{-\frac{2}{4\gamma+1}}$. In this Gaussian case, we recommend to take $\gamma=1$ and $h_n$ to have the order of $n^{-\frac{2}{5}}$ to make (\ref{clt1}) hold for all $d<0$. To apply kernel method in estimation, one should select an optimal bandwidth based on some criteria, for example, to minimize the mean squared error. For the i.i.d. case, Gin\'{e} and Nickl (2008) construct a kernel-based rate adaptive estimator of $\int_\mathbb{R}f^2(x)dx$ by selecting the bandwidth $h_n$ adaptively in the first step. It is interesting to investigate the bandwidth adaptive kernel estimator of  $\int_\mathbb{R}f^2(x)dx$ for linear processes from both theoretical and application viewpoints. However, it seems that the study in this direction is very difficult. We leave it as an open question for future study. For the current simulation study, we take $h_n=n^{-2/5}$ in the Gaussian case. 
 
 {\it Case 2}.  In the $\alpha$-stable case, $\int_{\R}\lambda^2 |\phi_{\varepsilon}(\lambda)|^2\, d\lambda<\infty$, $\E|e^{\iota \lambda \varepsilon_1}-\phi_{\varepsilon}(\lambda)|^{4}\leq c_{\frac{\alpha}{4},4} \left(|\lambda|^{\alpha}\wedge 1\right)$ and $\E|e^{\iota \lambda \varepsilon_1}-\phi_{\varepsilon}(\lambda)|^{2}\leq c_{\frac{\alpha}{2},2} \left(|\lambda|^{\alpha}\wedge 1\right)$. Therefore, Theorem \ref{thm1} (Corollary \ref{corollary1}) and Theorem \ref{thm2} (Corollary \ref{corollary2}) hold for all short memory linear processes having symmetric $\alpha$-stable innovations ($0<\alpha<2$) with $\gamma=\frac{\alpha}{4}$ and $\gamma=\frac{\alpha}{2}$, respectively. Hence we require $\sum_{i=0}^\infty |a_i|^{\alpha/4}<\infty$ in Theorem \ref{thm1} and Corollary \ref{corollary1} and we only require $\sum_{i=0}^\infty |a_i|^{\alpha/2}<\infty$ in Theorem \ref{thm2} and Corollary \ref{corollary2}.  For this reason, we should apply 
Theorem \ref{thm2} and Corollary \ref{corollary2} instead of Theorem \ref{thm1} and Corollary \ref{corollary1}. Since $\gamma=\frac{\alpha}{2}$, Theorem \ref{thm2} holds  for all $\alpha\in (0,2)$ while in Corollary \ref{corollary2}, we have (\ref{opt2}) for $\alpha\in (0,1]$ and the central limit theorem (\ref{clt2}) only for $\alpha\in (1,2)$.
%
Hence if $1<\alpha<2$, we should take $h_n$ to have the order of $n^{-\frac{1}{\alpha+1}}$ to confirm the central limit theorem (\ref{clt2}).  If  $0<\alpha\le 1$, we should not expect to have (\ref{clt2}). But in this case we can take $h_n$ with $\sqrt{n}h_n\rightarrow \infty$ to confirm the central limit theorem (\ref{r6}) in Theorem \ref{thm2}. In the simulation study, we take $h_n=n^{-2/5}$. In Theorem \ref{thm2} and Corollary \ref{corollary2}, the coefficients $a_i$ should have parameter $d$ with $d<1-\frac{1}{\gamma}=1-\frac{2}{\alpha}$.  

Figure \ref{fig:1} and Figure \ref{fig:2} show the histograms with kernel density fits and normal Q-Q plots of $r_n=\sqrt{n}\, \Big[T_n(h_n)-\int_{\R} f^2(x)\,dx\Big]$ from $m=1,000$ simulated linear processes with Gaussian innovations and $d=-0.1,-0.9, -3.5$.  It is clear that the realizations of $r_n$ are distributed as some normal distribution in each case. We also have performed simulation study for $d=-0.3, -0.5,-0.7, -1.5$ and $-2.5$ and obtained similar plots and results.

We then perform hypothesis test $H_0:\mu=0$;  $H_1:\mu\ne 0$ for the mean of the asymptotic distribution of $r_n$ and provide the 95\% confidence interval for $\mu$ for each value of $d$. The result is listed in Table \ref{tab3}. It is clear that $r_n$ is  asymptotically centered  in each case. Over all, the simulation study here confirms the analysis for the Gaussian case based on the main results in Section \ref{main}. 
\begin{figure}[H]
\centering
\begin{tabular}{cc}
\includegraphics[width=0.33\linewidth]{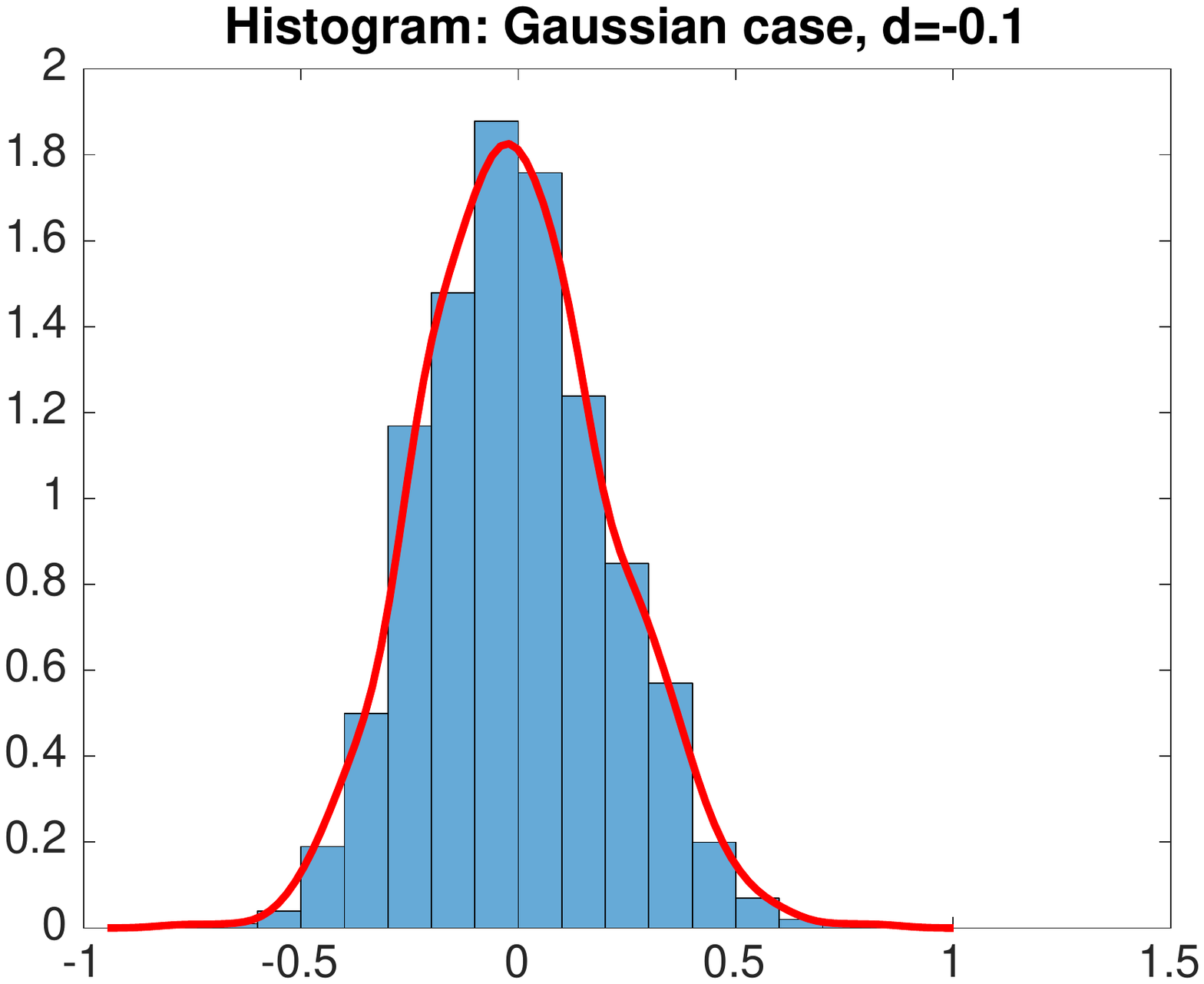}
\includegraphics[width=0.33\linewidth]{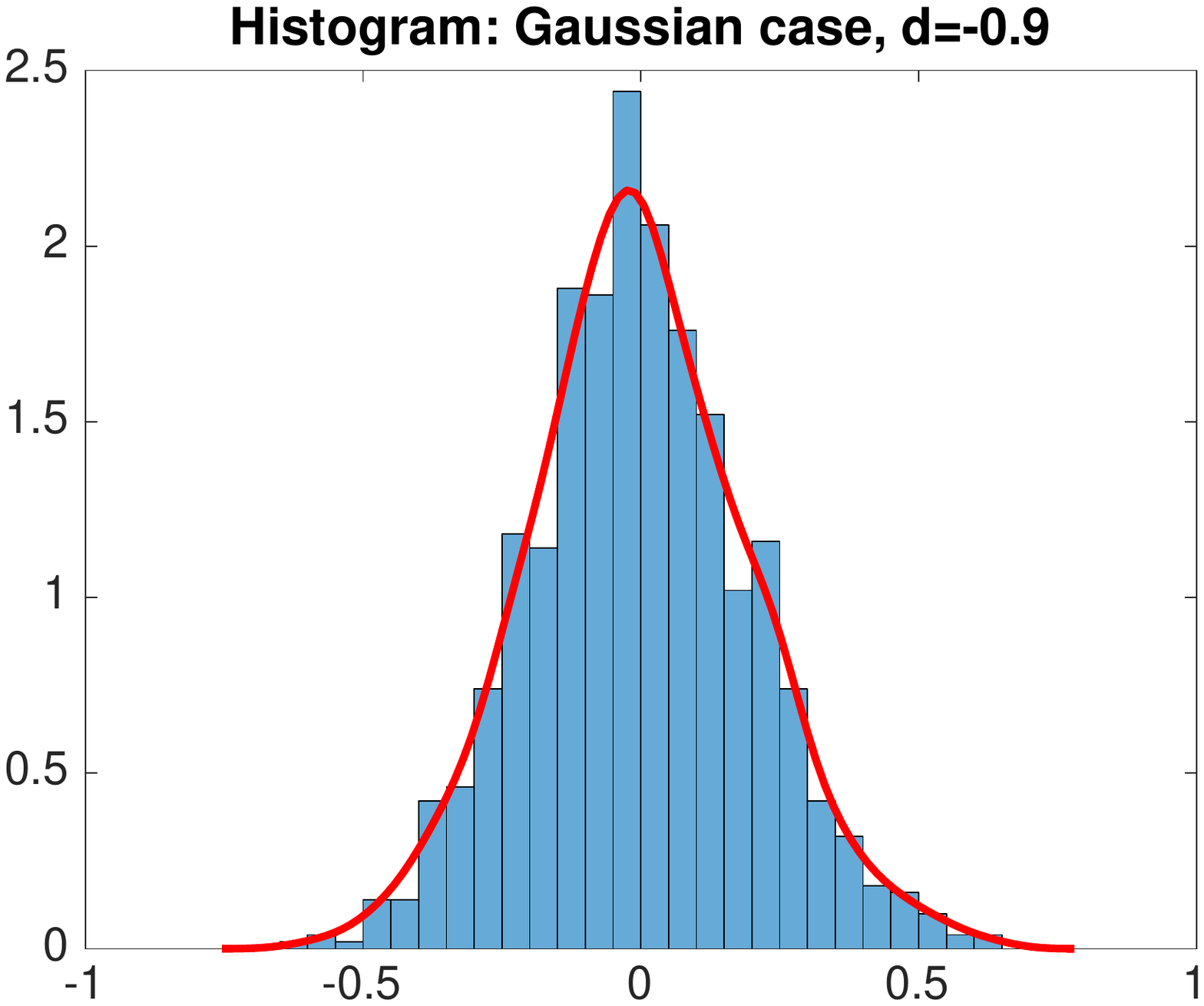}
\includegraphics[width=0.33\linewidth]{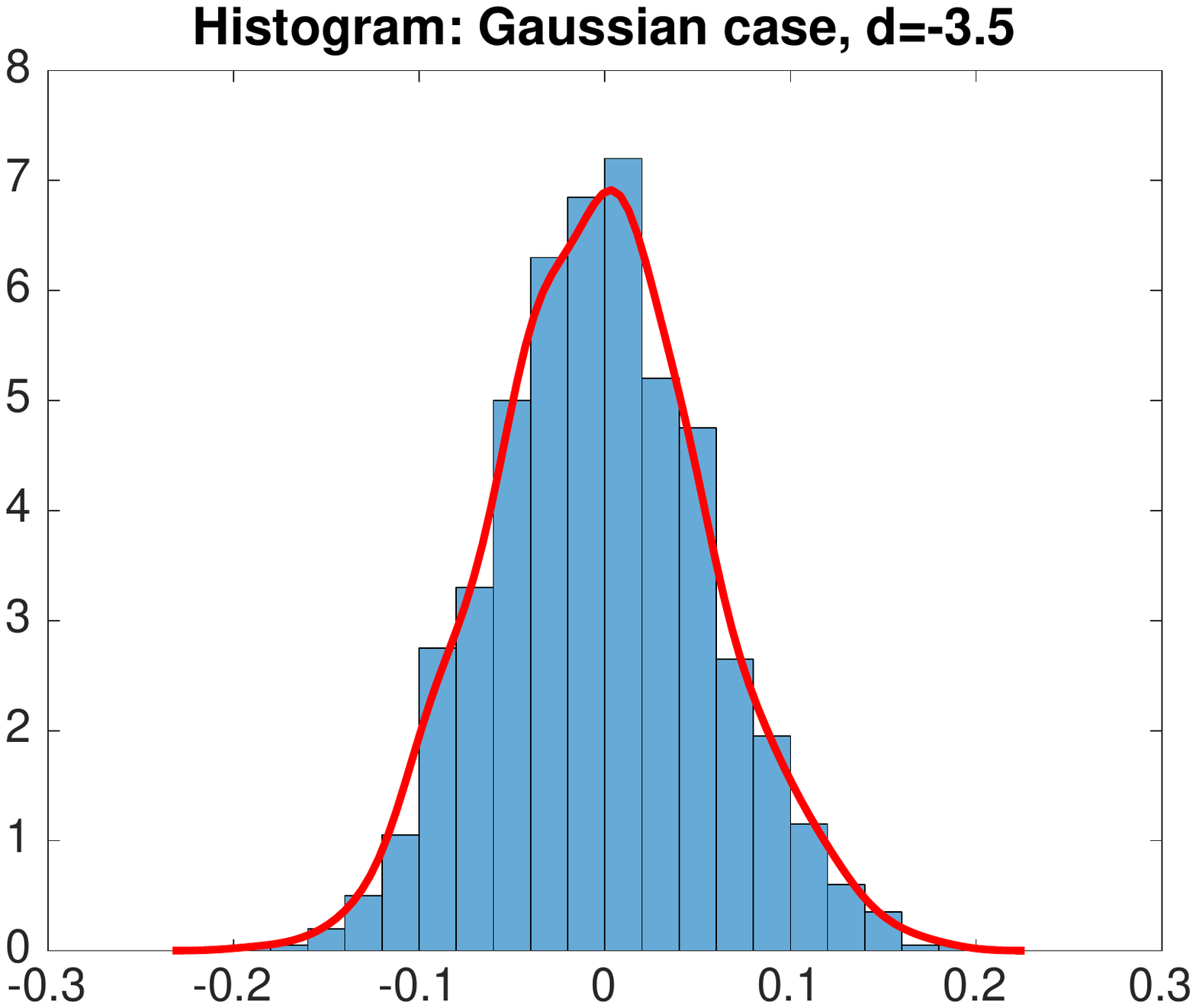}
\end{tabular}
\caption {The histograms with kernel density fits of $r_n$ from $m=1,000$ simulated processes with Gaussian innovations and $d=-0.1, -0.9, -3.5$. \label{fig:1}}
\end{figure}
%
%
\begin{figure}[H]
\centering
\begin{tabular}{cc}
\includegraphics[width=0.33\linewidth]{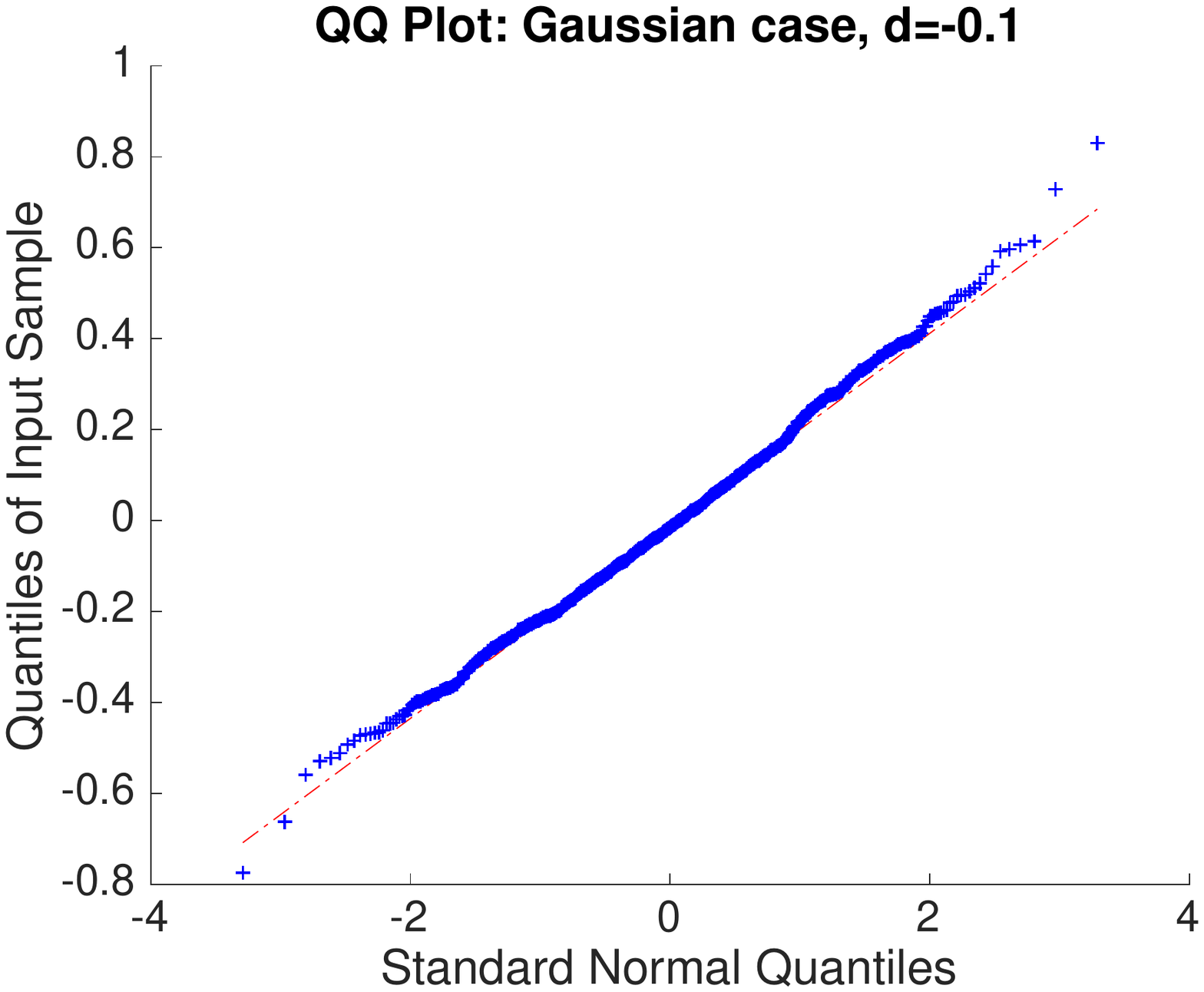}
\includegraphics[width=0.33\linewidth]{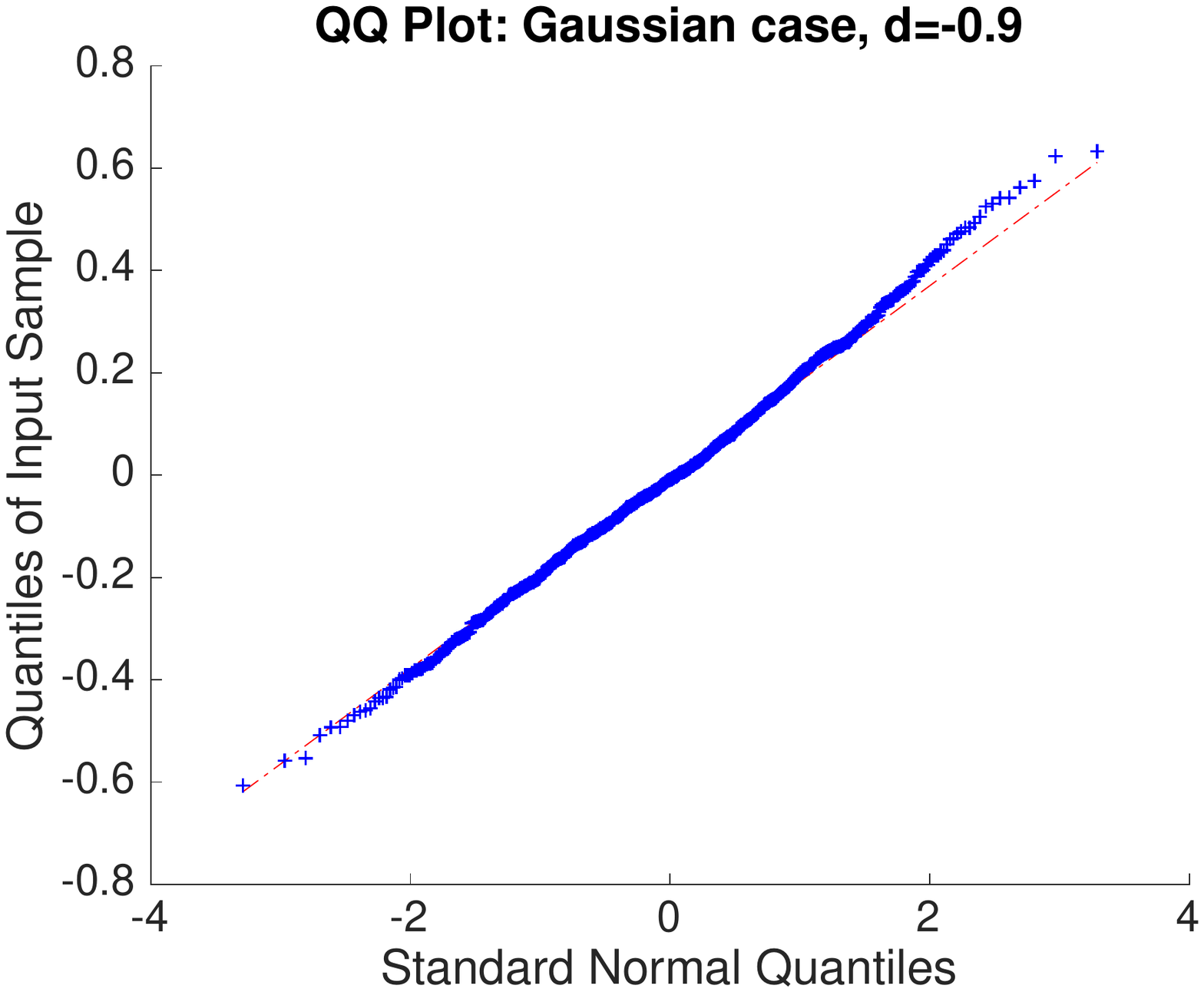}
\includegraphics[width=0.33\linewidth]{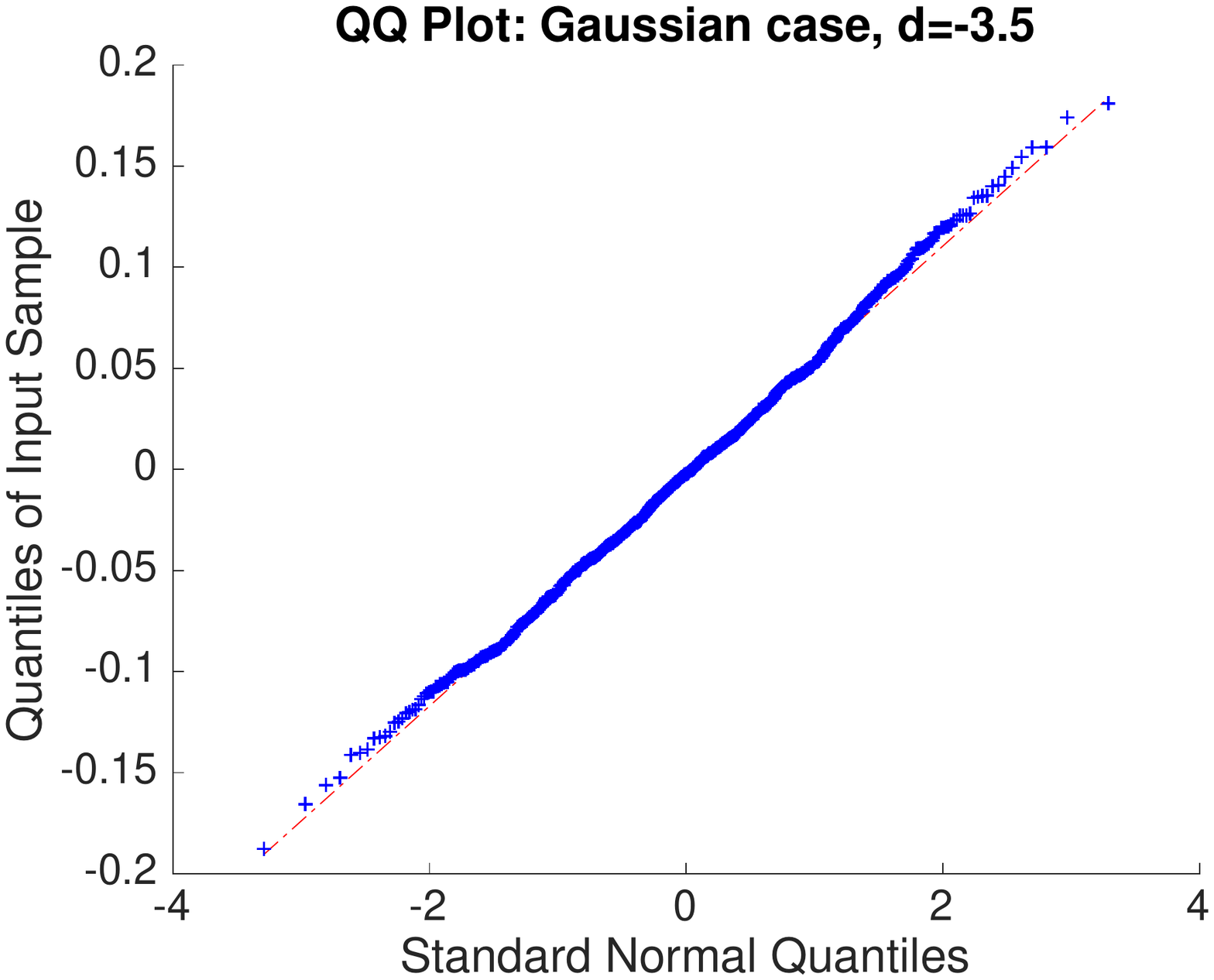}
\end{tabular}
\caption {The normal Q-Q plots of $r_n$ from $m=1,000$ simulated processes with Gaussian innovations and $d=-0.1, -0.9, -3.5$.\label{fig:2}}
\end{figure}
%

 \begin{table}[H]
\center
\caption{ The p-values on testing $H_0: \mu=0; H_1: \mu\ne 0$ for the asymptotic distribution of $r_n$ and the 95\% confidence intervals for $\mu$.} 
\label{tab3}
\bigskip
\begin{tabular}{| l | l | l | l | l |}
\hline                                                     
$d$                                          & $-0.1$  & $-0.3$ &  $-0.5$ &  $-0.7$ \\
\hline

p-value & 0.3996 & 0.7810 & 0.5058 & 0.6676  \\
\hline
CI & (-0.0191,0.0076) & (-0.0159,0.0120) & (-0.0086,0.0174) & (-0.0099, 0.0154)\\
\hline
\hline                                                     
$d$                                   &  $-0.9$   & $-1.5$  & $-2.5$ &  $-3.5 $ \\
\hline

p-value & 0.6705 & 0.8297 & 0.2055  &  0.2211    \\
\hline
CI & (-0.0148, 0.0095) & (-0.0064,0.0080) & (-0.0021, 0.0098)  & (-0.0058, 0.0013)    \\
\hline
\end{tabular}  
\end{table}
Figure \ref{fig:3} and Figure \ref{fig:4} show the histograms with kernel density fits and normal Q-Q plots of $r_n=\sqrt{n}\, \Big[T_n(h_n)-\int_{\R} f^2(x)\,dx\Big]$ from $m=1,000$ simulated processes with Cauchy innovations and $d=-0.1,-0.5, -0.9$.  It is clear that the realizations of $r_n$ do not follow normal distribution in the case $d=-0.1$ or $d=-0.5$. We also have performed simulation study for $d=-0.3, -0.7$ and obtained similar plots and results. For $d=-0.9$, the histogram and normal Q-Q plot indicate that the distribution of $r_n$ is very close to some normal distribution with mean $0$. This is because the central limit theorem (\ref{clt2}) in Corollary \ref{corollary2} holds for $\alpha=1+\epsilon$ with $\epsilon$ an arbitrary small positive number, $d<1-\frac{2}{\alpha}$, which is close to the case $\alpha=1$, $d=-0.9$ here.  In the case $0<\alpha\le 1$, we have the central limit theorem (\ref{r6}) although  (\ref{clt2}) does not hold any more. 
\begin{figure}[H]
\centering
\begin{tabular}{cc}
\includegraphics[width=0.33\linewidth]{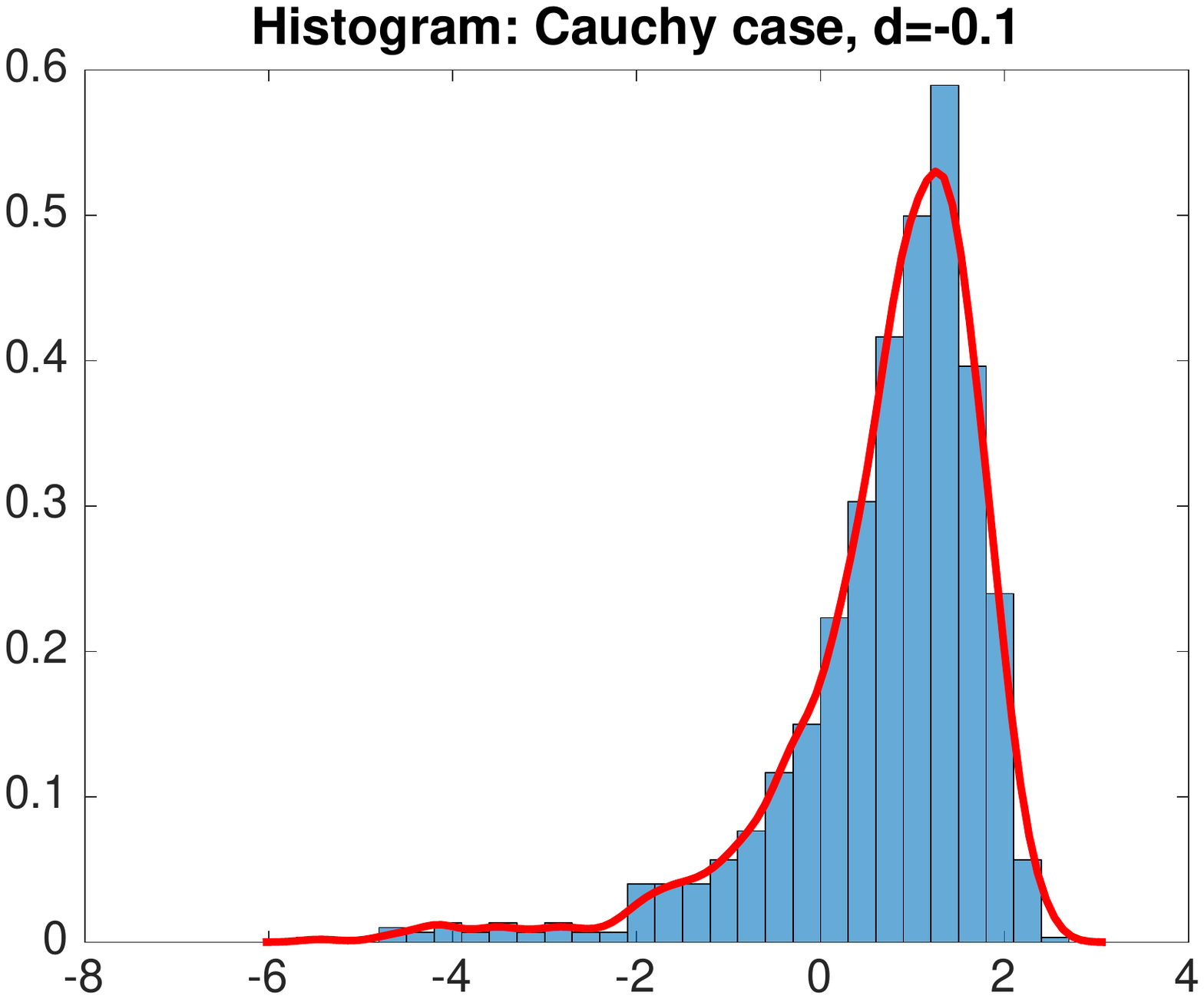}
\includegraphics[width=0.33\linewidth]{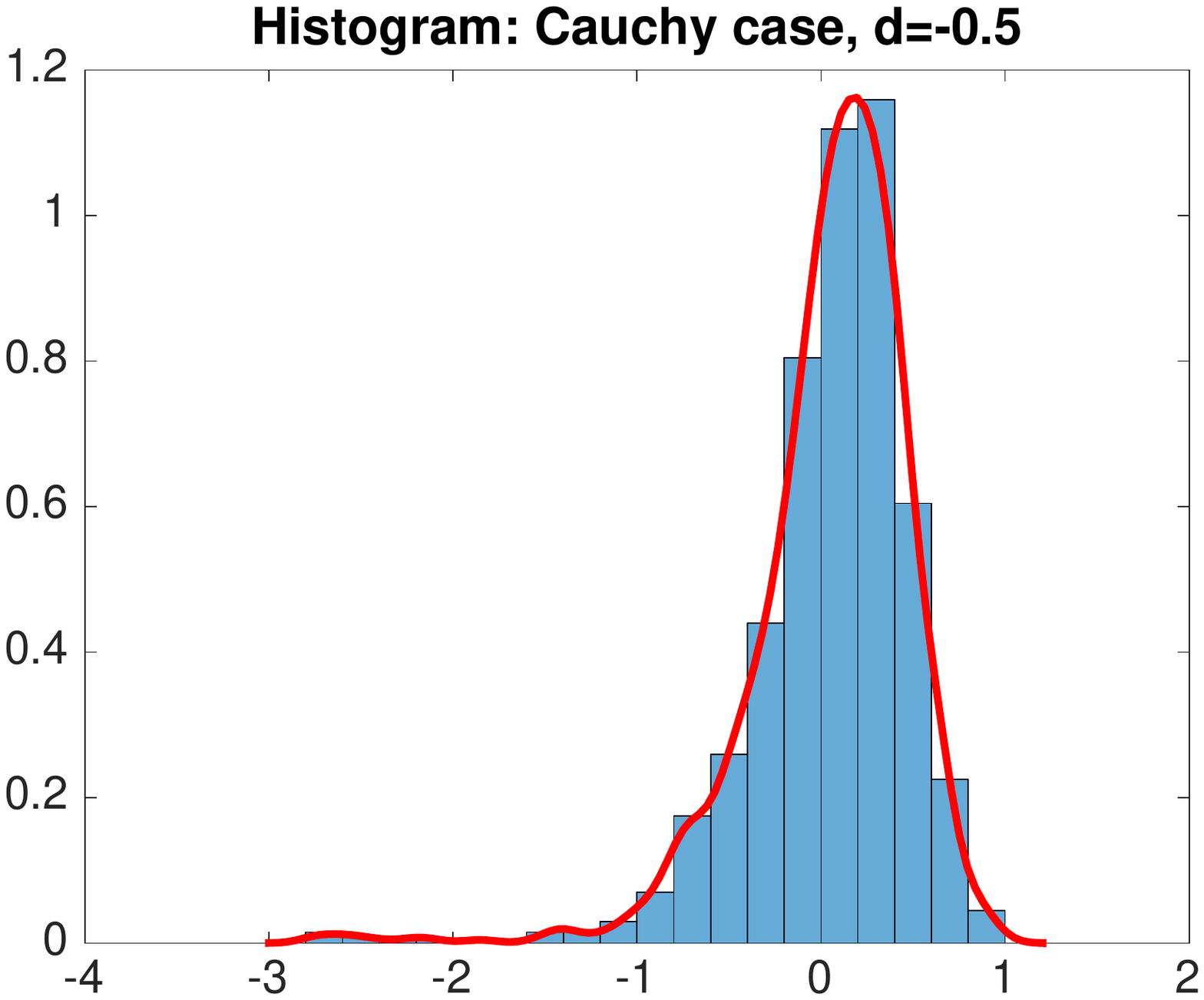}
\includegraphics[width=0.33\linewidth]{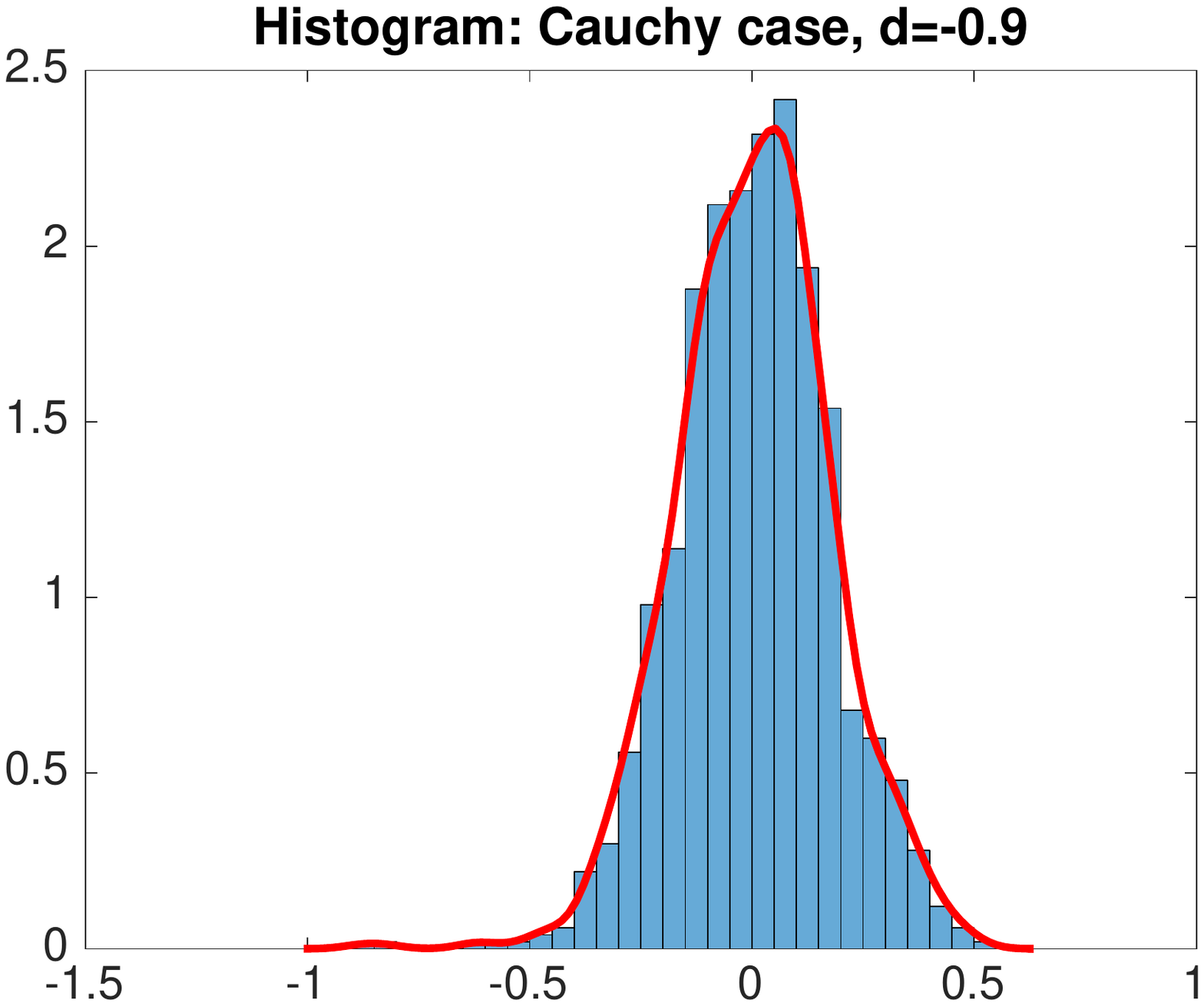}
\end{tabular}
\caption {The histograms with kernel density fits of $r_n$ from $m=1,000$ simulated processes with Cauchy innovations and $d=-0.1, -0.5, -0.9$.\label{fig:3}}
\end{figure}
\begin{figure}[H]
\centering
\begin{tabular}{cc}
\includegraphics[width=0.33\linewidth]{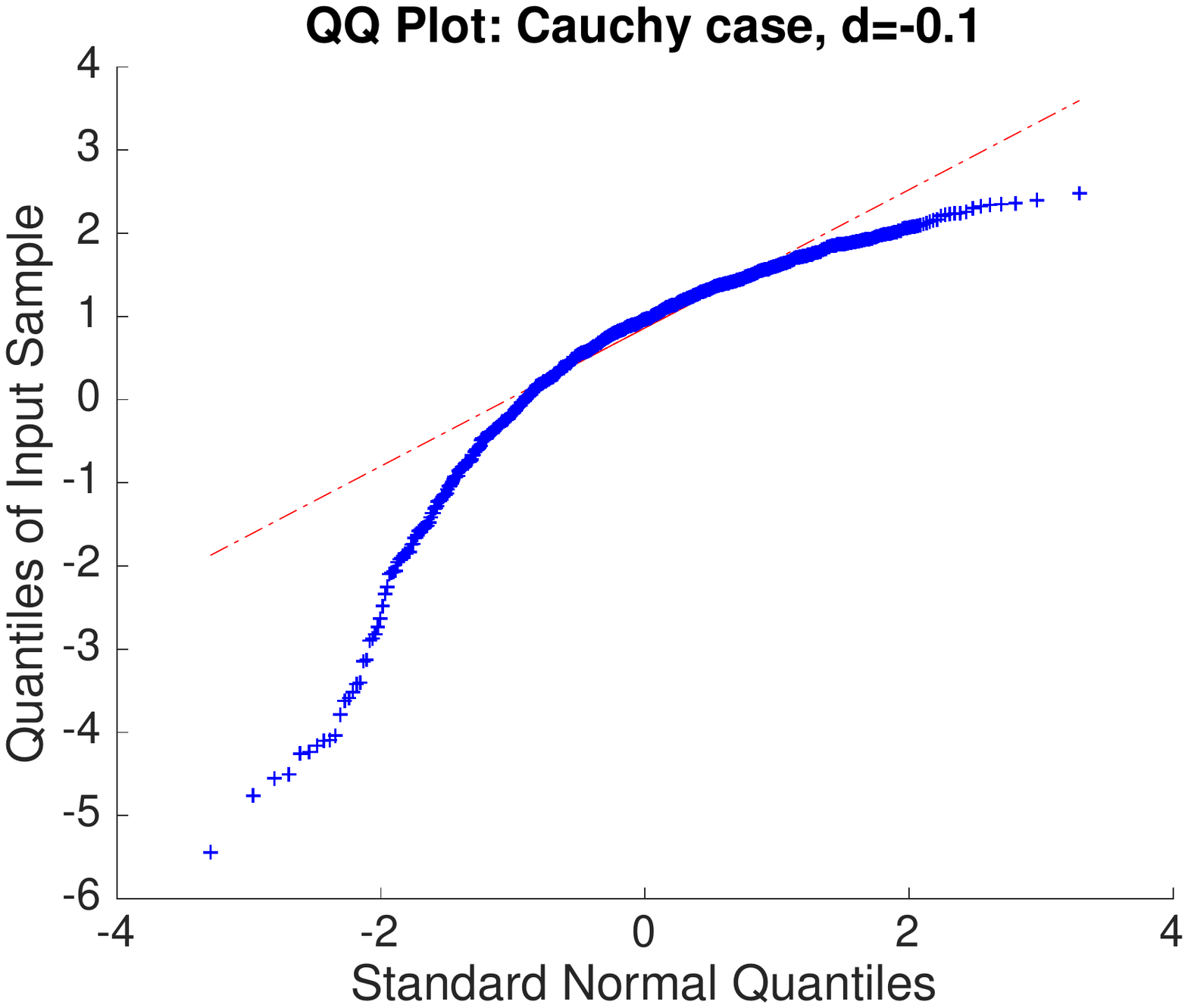}
\includegraphics[width=0.33\linewidth]{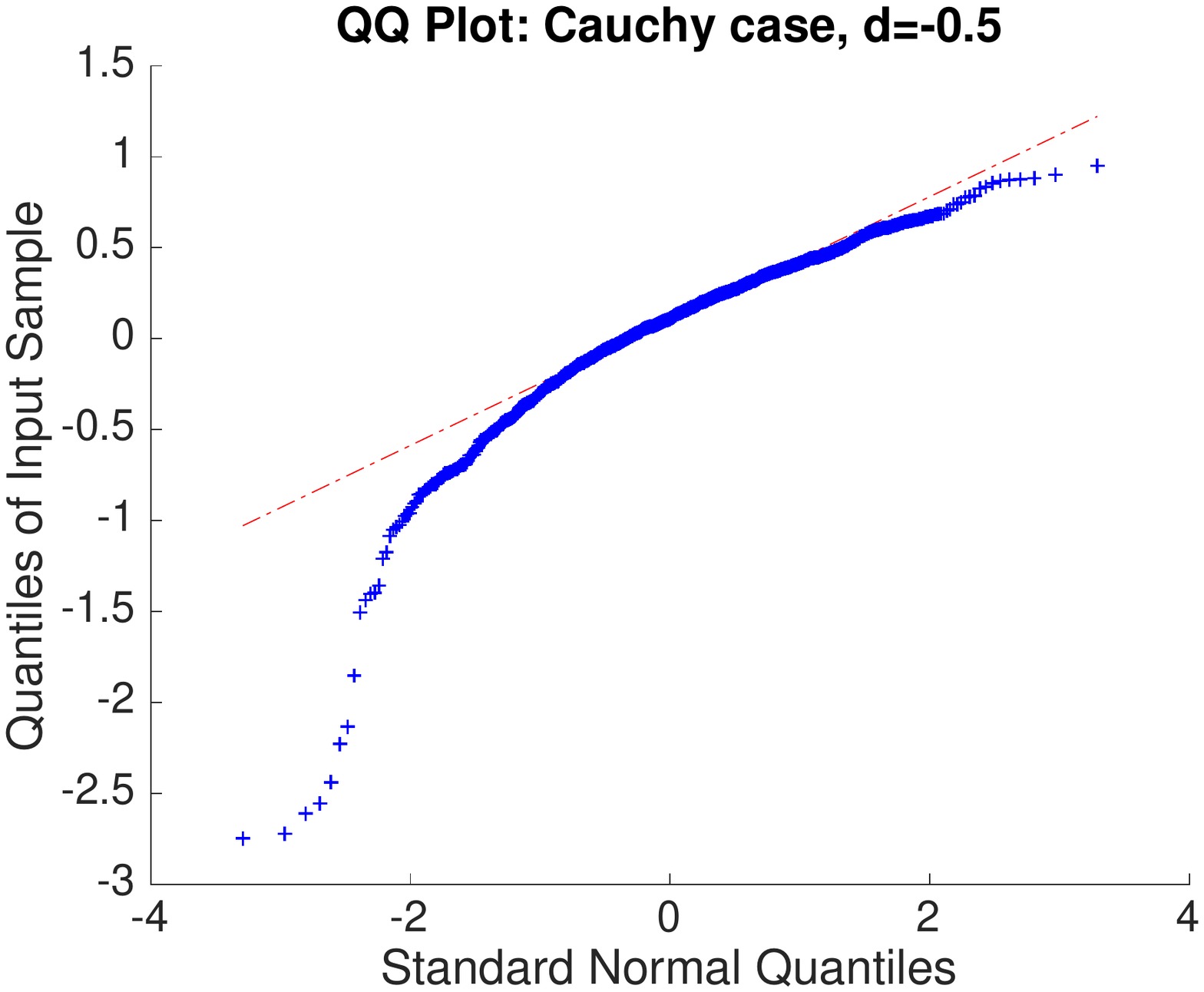}
\includegraphics[width=0.33\linewidth]{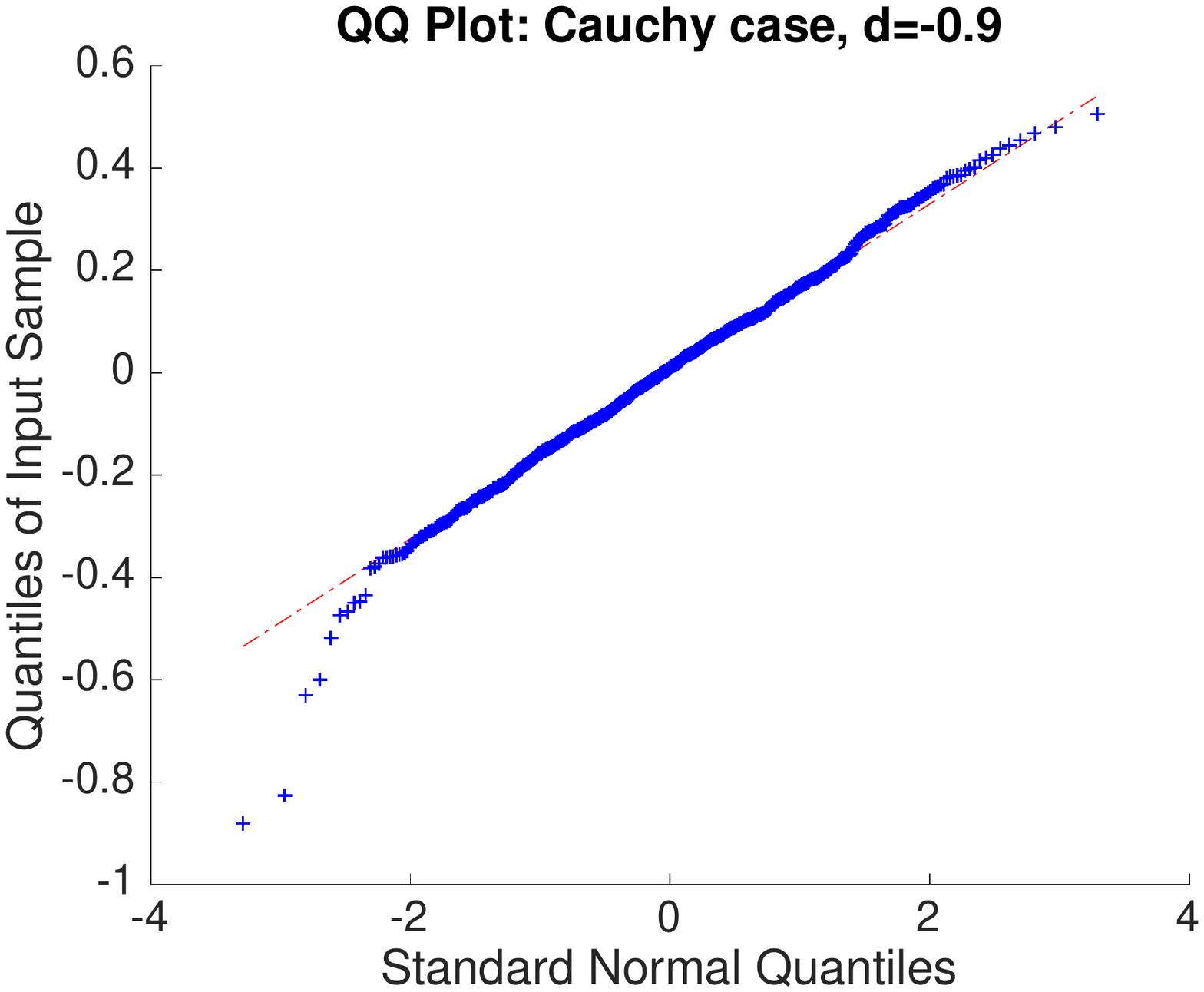}
\end{tabular}
\caption {The normal Q-Q plots of $r_n$ from $m=1,000$ simulated processes with Cauchy innovations and $d=-0.1, -0.5, -0.9$.\label{fig:4}}
\end{figure}

Figure \ref{fig:5} and Figure \ref{fig:6} show the histograms with kernel density fits and normal Q-Q plots of $\sqrt{n}T_n(h_n)$ from $m=1,000$ simulated processes with stable innovations ($\alpha=0.5$) and $d=-1.8,-2.5, -3.5$.  It is clear that the values of $\sqrt{n}T_n(h_n)$ do not follow normal distribution in the case $d=-1.8$ while they do follow normal distribution in the case $d=-3.5$. For $d=-2.5$, the histogram and normal Q-Q plot indicate that the distribution of $\sqrt{n}T_n(h_n)$ is not Gaussian but is close to some normal distribution. This confirms the analysis that  $\sqrt{n}T_n(h_n)$ follows a normal distribution when $d<1-\frac{2}{\alpha}$ even if $0<\alpha\le 1$.
\begin{figure}[H]
\centering
\begin{tabular}{cc}
\includegraphics[width=0.33\linewidth]{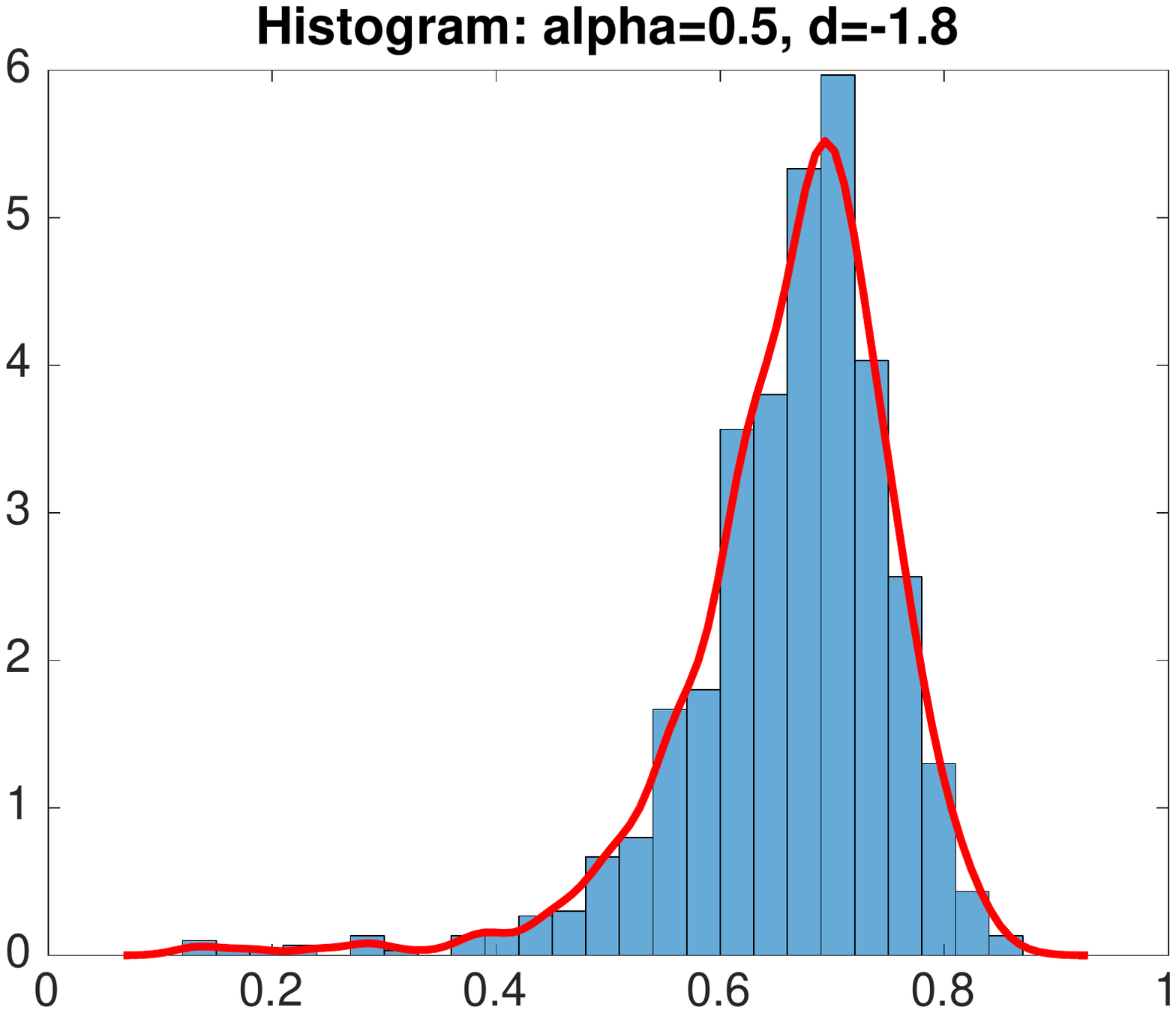}
\includegraphics[width=0.33\linewidth]{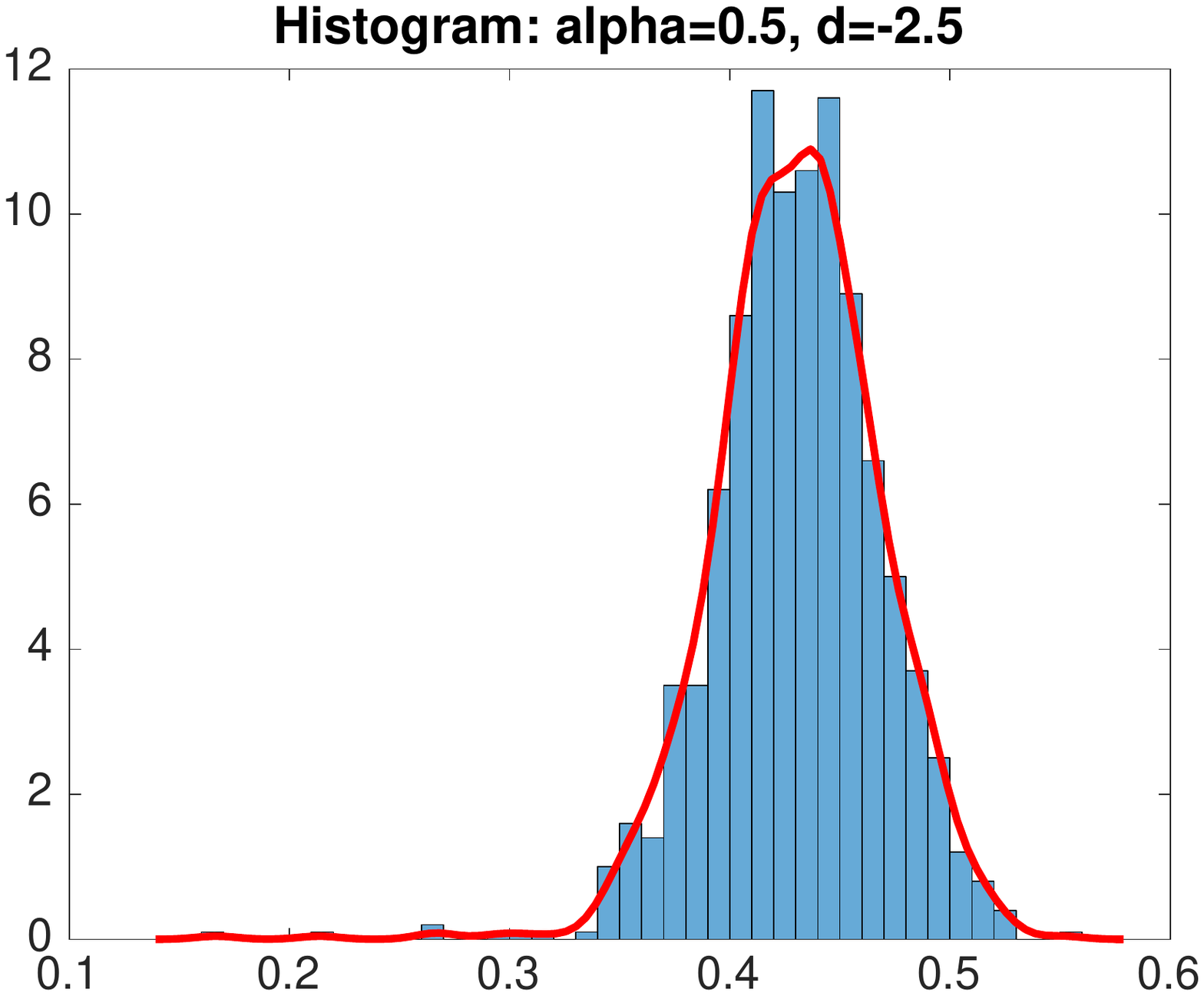}
\includegraphics[width=0.33\linewidth]{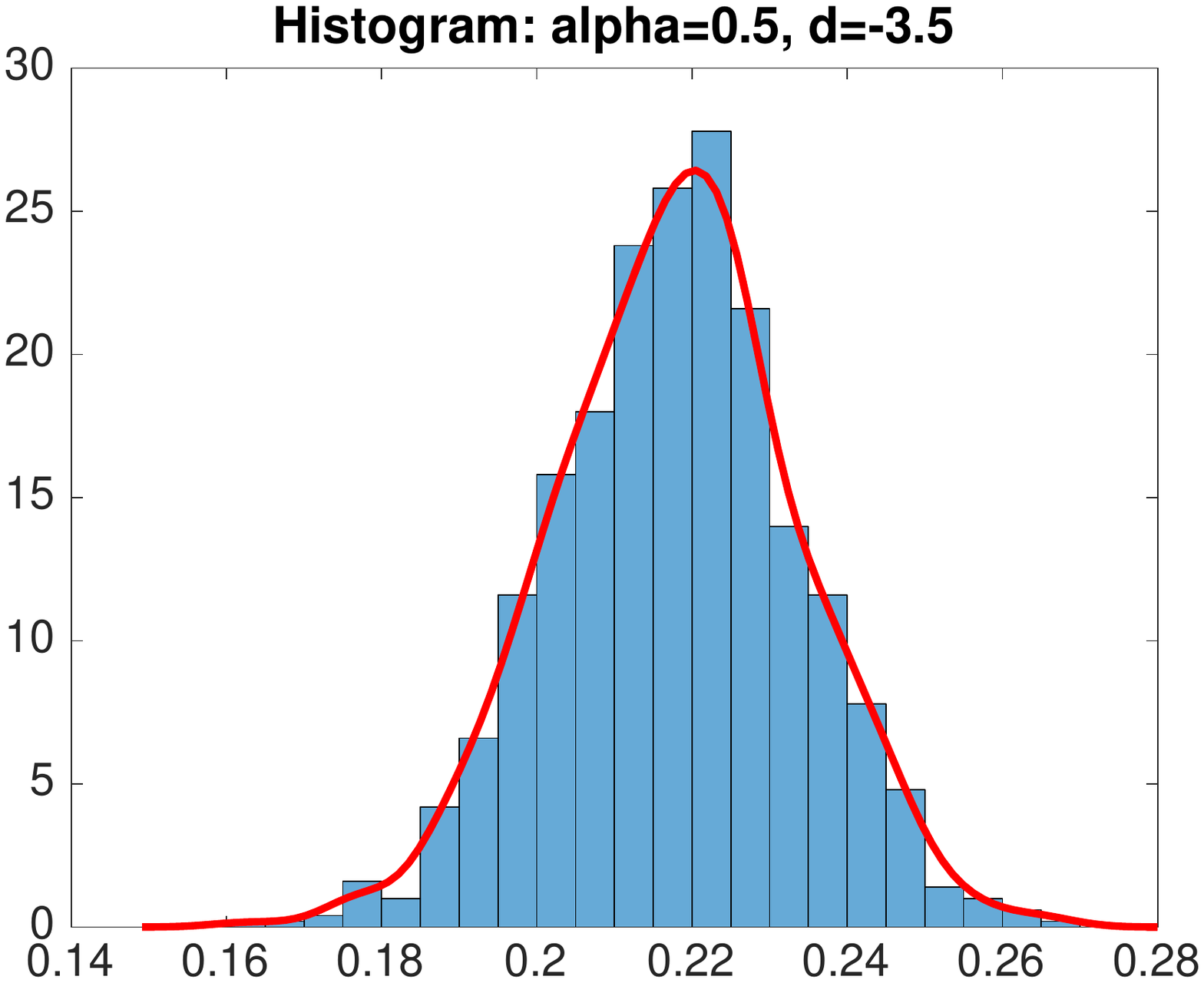}
\end{tabular}
\caption {The histograms with kernel density fits of $\sqrt{n}T_n(h_n)$ from $m=1,000$ simulated processes with stable innovations ($\alpha=0.5$)  and $d=-1.8, -2.5, -3.5$.\label{fig:5}}
\end{figure}
\begin{figure}[H]
\centering
\begin{tabular}{cc}
\includegraphics[width=0.33\linewidth]{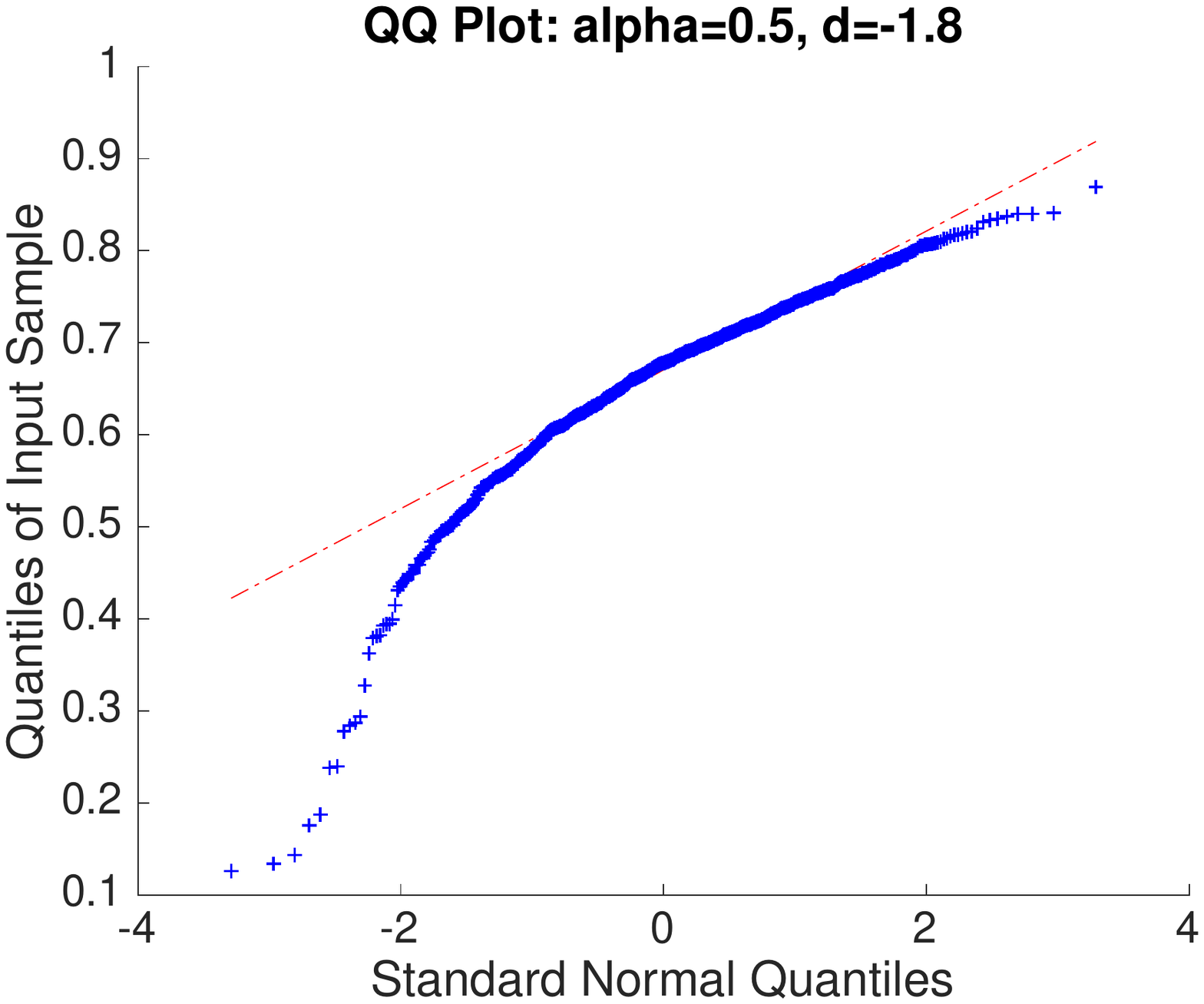}
\includegraphics[width=0.33\linewidth]{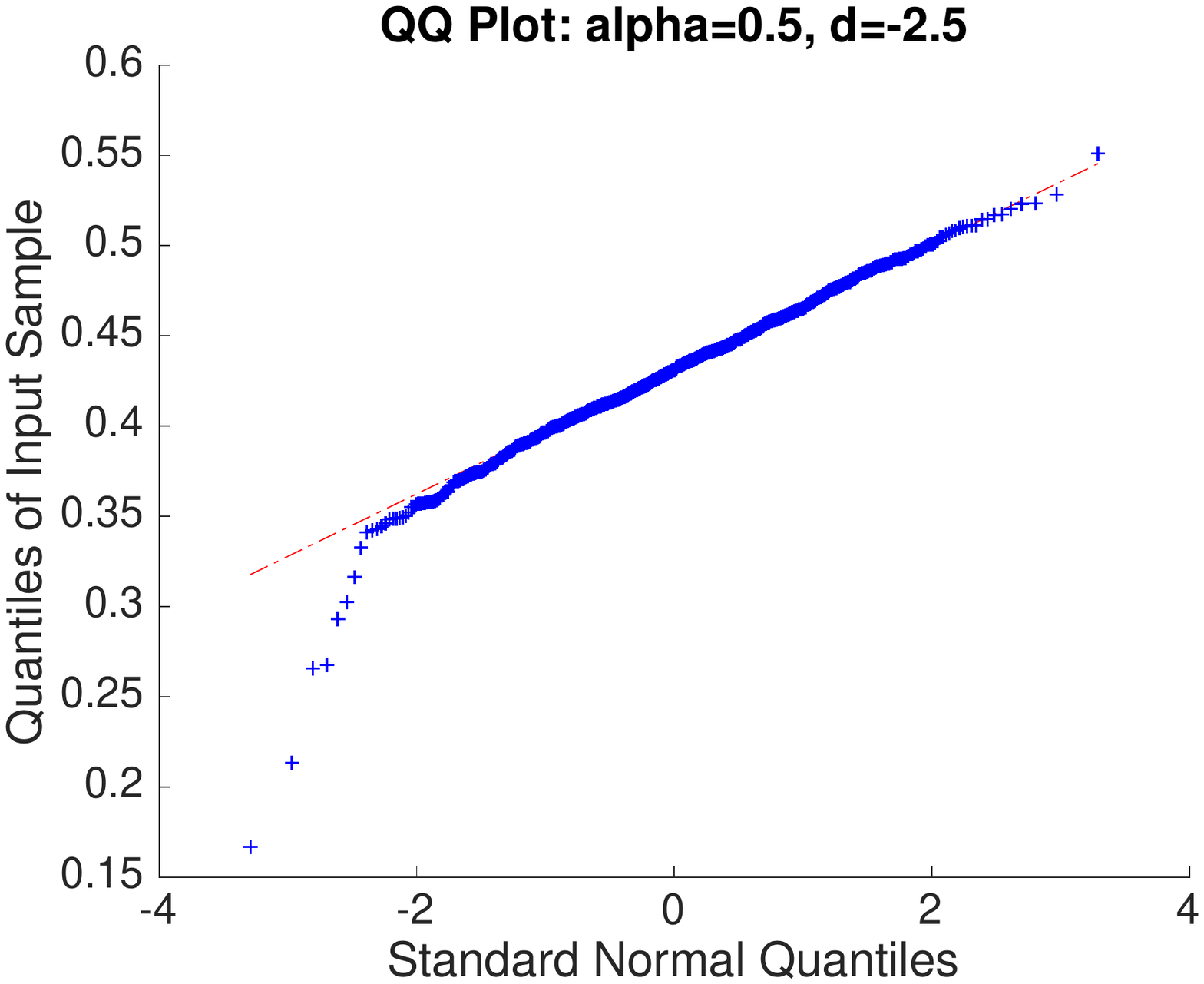}
\includegraphics[width=0.33\linewidth]{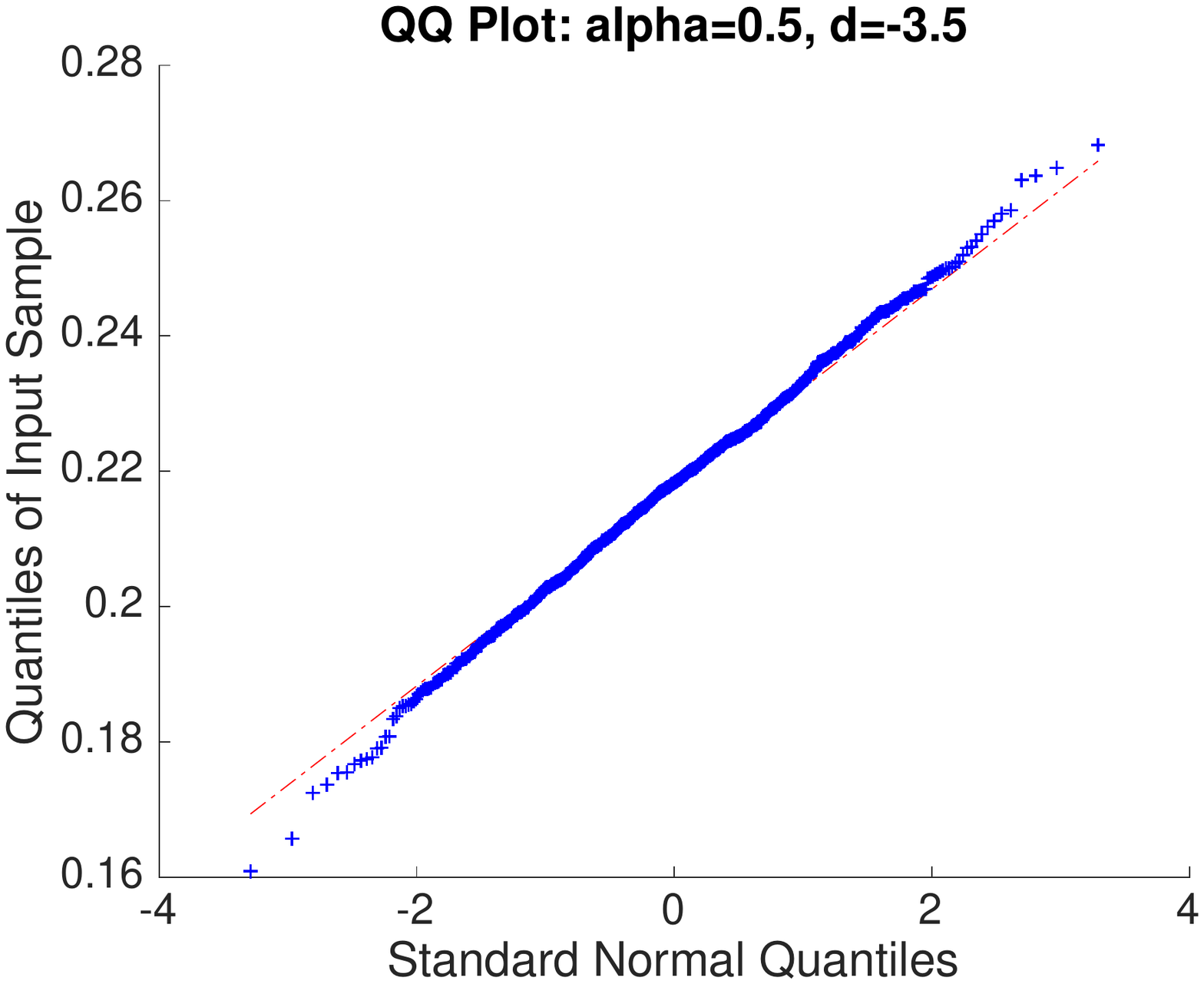}
\end{tabular}
\caption {The normal Q-Q plots of $\sqrt{n}T_n(h_n)$ from $m=1,000$ simulated processes with stable innovations ($\alpha=0.5$)  and $d=-1.8, -2.5, -3.5$. \label{fig:6}}
\end{figure}

For the purpose of illustration, we apply the $L^2_2$ kernel divergence estimator $\widehat{D}(f,g)$ to study the difference of annual average river flows among four rivers on the earth.  The data sets consist of annual average river flows (1,000 cubic meters per second) from  Danube River at Orshava, Romania, Gota River near Sjotop-Vannersburg, Sweden,  Mississippi River near St. Louis, Missouri, USA and Rhine River near Basle, Switzerland for 96 years between 1861 and 1956. The river flow data of these rivers are well studied in ecology and statistics, e.g., Hipel and McLeod (1994). 
The original data sets cover different time periods.  We choose part of each data set from year 1861 to year 1956 in order to compare the flows in the same time period.  
The Kwiatkowski, Phillips, Schmidt, and Shin (KPSS) test, the time series plots in Figure \ref{fig:7} and the autocorrelation plots in Figure \ref{fig:8} show the stationarity and short memory property of the river flow level of each river. 

We apply the estimator to calculate the divergence between the flow distributions of every two rivers. As in the simulation study, we take the standard normal density function as the kernel function. The right part of Figure \ref{fig:7} gives the kernel density estimations of river flows for each river.  Since they  approximately follow some normal distributions, we let the bandwidth to be $h_n=96^{-0.4} \approx 0.161$. All the divergences are presented in Table \ref{tab:divergence}.  According to the density curves in Figure \ref{fig:7}, it is not a surprise that the biggest $L^2_2$ divergence is between the Gota River and the Rhine River. The density functions of these two rivers do not have much overlap although they are very close in location. On the other hand, the density functions of  Gota River and  Rhine River, in particular the density function of the Gota River,  have much bigger values in their supports than those of the other two rivers. The squared difference $(f(x)-g(x))^2$, not the relative location between the supports of $f(x)$ and $g(x)$, makes contribution to the value of the $L^2_2$ divergence $D(f,g)$. Mississippi River and Danube River have very similar density functions in terms of supports and values. Mississippi River has the smallest maximum density and it has a little bit overlap with Gota River or  Rhine River. There is almost no overlap between the Danube River and Gota River or  Rhine River. This explains the order of the divergences in Table \ref{tab:divergence}.

\begin{figure}[H]
\centering
\begin{tabular}{cc}
\includegraphics[width=0.45\linewidth]{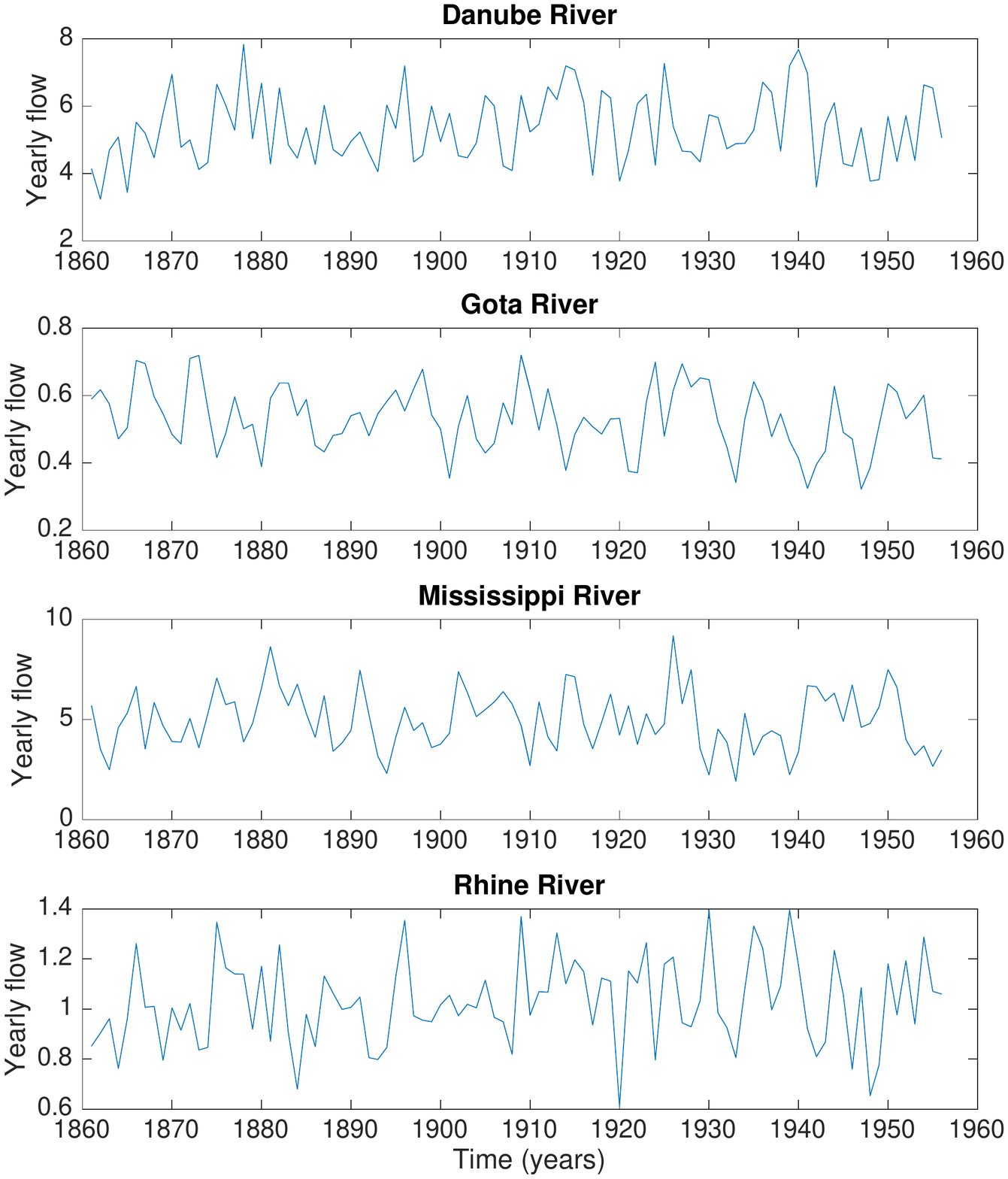}
\includegraphics[width=0.45\linewidth]{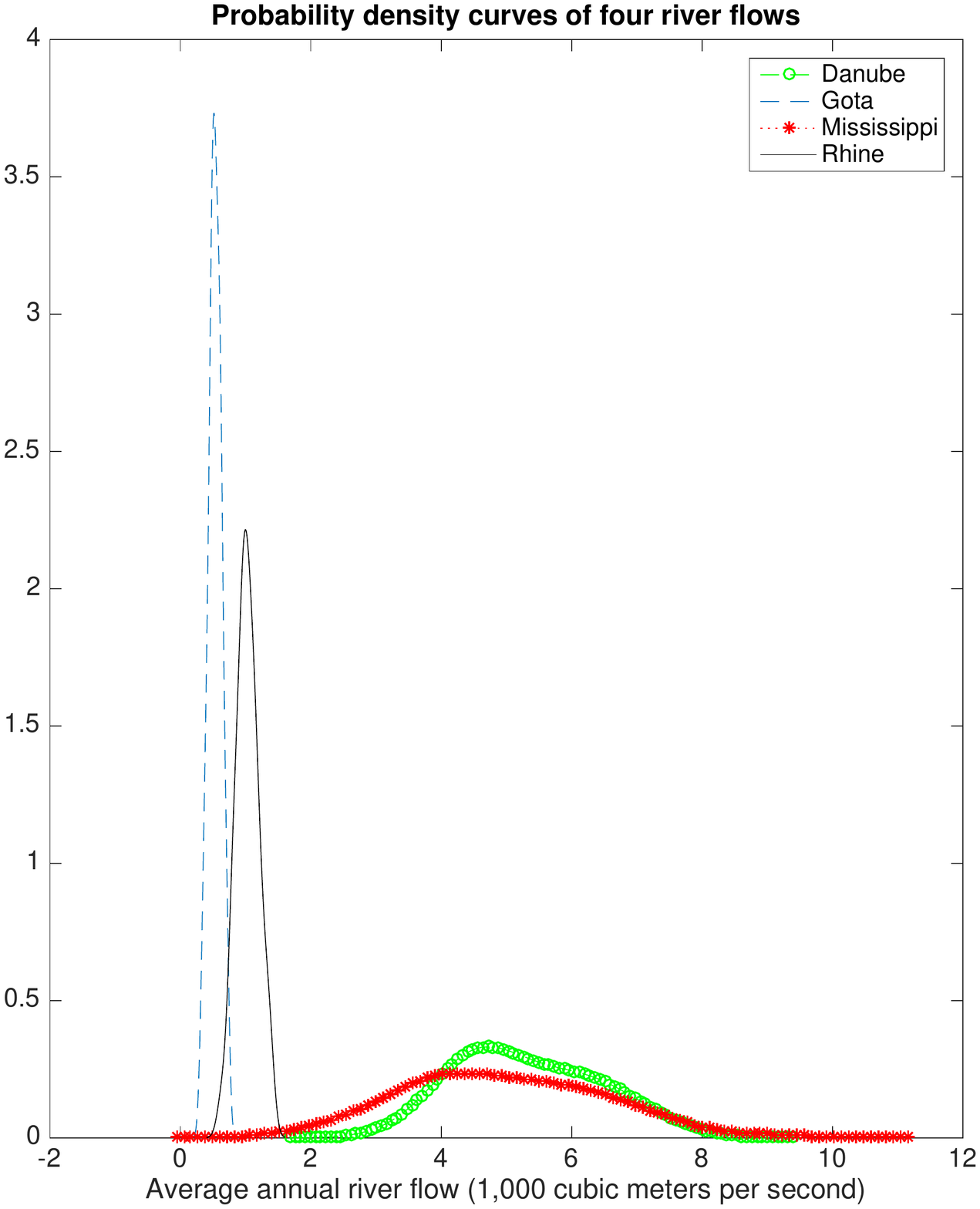}
\end{tabular}
\caption {The time series plots and probability density functions of average annual river flow. \label{fig:7}}
\end{figure}

\begin{figure}[H]
\centering
\begin{tabular}{cc}
\includegraphics[width=0.55\linewidth]{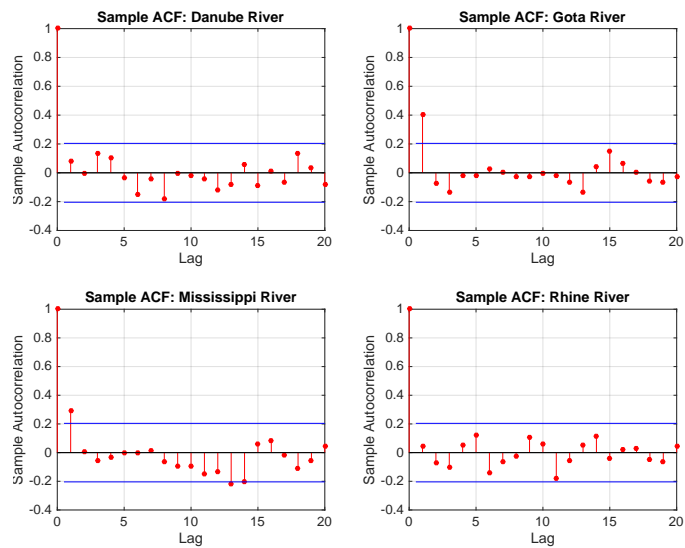}
\end{tabular}
\caption {The sample autocorrelation functions of average annual river flow. \label{fig:8}}
\end{figure}

 \begin{table}[H]
\center
\caption{River flow divergences} 
\label{tab:divergence}
\bigskip
\begin{tabular}{| l | l |}
\hline                                                     
\;\;\;\;\;\;\;\;\;\;\;\;\;\;\;\;\;\;The rivers                                         &Divergence   \\
\hline

\;\;Gota  River,  \;\;\; Rhine River &\;\;\; 2.7797  \\
\hline
\;\;Gota River, \;\;\; Danube River &\;\;\; 2.1421  \\
\hline
\;\;Gota River, \;\;\; Mississippi River &\;\;\; 2.0725 \\
\hline
\;\;Rhine River, \;\;\;Danube River &\;\;\; 1.6129  \\
\hline
\;\;Rhine River, \;\;\;Mississippi River &\;\;\; 1.5433  \\
\hline
\;\;Danube River, Mississippi River &\;\;\; 0.0348 \\
\hline
\end{tabular}  
\end{table}

\section{Extensions to multivariate linear processes}\label{extension}

In this section, we denote norm functions in various linear spaces by $|\cdot|$. For $x\in\R^d$, $|x|:=\sqrt{x\, x^T}$ where $x^T$ is the transposition of $x$. For a $d\times d$ deterministic real matrix $a$, $|a|:=\max\limits_{x\in\R^d, |x|=1} |a\, x^T|$. In the sequel, we give extensions to the following multivariate linear process
\begin{align}\label{mlp}
X_n=\sum\limits_{i=0}^\infty a_i\, \varepsilon_{n-i},
\end{align}
where the innovations $\varepsilon_i$ are i.i.d. $d\times 1$ random vectors in some probability space $(\Omega, \mathcal{F}, \P)$ and $a_i$ are $d\times d$ deterministic real matrices such that $\sum\limits_{i=0}^\infty a_i\, \varepsilon_{n-i}$ converges in distribution. It is easy to see that the multivariate linear process $X_n$ in (\ref{mlp}) exists and is well defined when $\varepsilon_i$ are i.i.d. $d\times 1$ random vectors in $L^p(\R^d)$ for some $p>0$ and $a_i$ are $d\times d$ deterministic real matrices satisfying $\sum\limits^{\infty}_{i=0}|a_i|^{2\wedge p}<\infty$.  Let $f$ be the probability density function of $X_n$. To estimate the quadratic functional of $f$, we would use the estimator
\[ 
T_n(h_n)=\frac{1}{n(n-1)\det(h_n)} \sum_{1\le i\neq j\le n}K\left(h^{-1}_n(X_i-X_j)\right),
\]
where the kernel $K$ is a symmetric and bounded function with $\int_{\R^d} K(u)\, du = 1$ and $\int_{\R^d} |u|^2|K(u)|\, du<\infty$. The bandwidth sequence $h_n$ are diagonal matrices $diag[h_{n1}, h_{n2}, \cdots, h_{nd}]$ satisfying $0<h_{ni}\to 0$ for all $i=1,2,\cdots, d$.  

We require the following assumptions on $a_i$ and $\varepsilon_i$.
\begin{assumption} \label{ass1}
$\det(a_0)\neq 0$, $\det(a_m)\neq 0$ for some $m\geq 1$ and $a_i=d\times d\; \text{zero matrix}$ for all $i>m$, and  $\int_{\R^d}|\lambda|^{2\gamma} |\phi_{\varepsilon}(\lambda)|^2\, d\lambda<\infty$ for some $\gamma\in(0,1]$.
\end{assumption}

\begin{assumption} \label{ass2}
$\det(a_0)\neq 0$, $\det(a_p)\neq 0$ and $\det(a_q)\neq 0$ with $p=\min\{i\in\N: \det(a_i)\neq 0\}$ and $q=\min\{i\in\N: \det(a_i)\neq 0\; \text{and}\; i>p\}<\infty$, $\sum\limits^{\infty}_{i=0} |a_i|^{\gamma}<\infty$, $\E|e^{\iota \lambda \varepsilon_1}-\phi_{\varepsilon}(\lambda)|^{4}\leq c_{\gamma,4} \left(|\lambda|^{4\gamma}\wedge 1\right)$ and $\int_{\R^d}|\lambda|^{2\gamma} |\phi_{\varepsilon}(\lambda)|^2\, d\lambda<\infty$ for some $\gamma\in(0,1]$, $\exists$ two indices $i_r$ s.t. $\det(a_{i_r+r}-a_{i_r})\neq 0$ for each $r\in\{1,2,\cdots, q\}$.
\end{assumption}

\begin{assumption} \label{ass3}
$\det(a_0)\neq 0$, $\det(a_p)\neq 0$ with $p=\min\{i\in\N: \det(a_i)\neq 0\}<\infty$, $\exists$ indice $i_r$ such that $\det(a_{i_r}-a_{i_r+r})\neq 0$ for each $r\in\{1,\cdots, p\}$, $\sum\limits^{\infty}_{i=0} |a_i|^{\gamma}<\infty$, $\E|e^{\iota \lambda \varepsilon_1}-\phi_{\varepsilon}(\lambda)|^{2}\leq c_{\gamma,2} \left(|\lambda|^{2\gamma}\wedge 1\right)$ and $\int_{\R^d}|\lambda|^{2\gamma} |\phi_{\varepsilon}(\lambda)|^2\, d\lambda<\infty$ for some $\gamma\in(0,1]$.
\end{assumption}

Using similar arguments as in the proof of Theorems \ref{thm1} and \ref{thm2} with some proper modifications, we can get the following results.

\begin{theorem} \label{thm5}  Under the assumption \ref{ass1} or \ref{ass2}, we further assume that $f$ is bounded. Then there exist positive constants $c_1$ and $c_2$ such that
\begin{align} \label{r71}
\Big|\E T_n(h_n)-\int_{\R^d} f^2(x)\, dx \Big|\leq c_1\left(\frac{1}{n}+\det(h_n)^{2\gamma}\right),
\end{align}
\begin{align}  \label{r72}
\E\Big|T_n(h_n)-\E T_n(h_n)-\frac{1}{n}\sum^n_{i=1}Y_i\Big|^2 \leq c_2 \Big(\frac{1}{n^2\det(h_n)}+\frac{\eta_{n,\gamma}}{n^2}+\frac{\det(h_n)^{2\gamma}}{n}\Big)
\end{align}
and, if additionally $n\det(h_n)\to \infty$ as $n\to\infty$,
\begin{align} \label{r73}
\sqrt{n}\, \Big[T_n(h_n)-\E T_n(h_n)\Big]\overset{\mathcal{L}}{\longrightarrow} N(0,4\sigma^2),
\end{align}
where $Y_i=2\big(f(X_i)-\int_{\R^d} f^2(x)\, dx\big)$, $\eta_{n,\gamma}=\sum\limits^{n}_{\ell=0}\sum\limits^{\infty}_{i=\ell} |a_i|^{\gamma}$
and $\sigma^2=\lim\limits_{n\to\infty} n^{-1}\Var(S_n)$ with 
\[
S_n=\sum\limits^n_{i=1}\left(f(X_i)-\int_{\R^d} f^2(x)\, dx\right).
\]
\end{theorem}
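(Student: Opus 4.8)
\noindent The plan is to follow, essentially verbatim, the Fourier-transform and projection argument used to prove Theorems \ref{thm1} and \ref{thm2}, and to check that Assumptions \ref{ass1} and \ref{ass2} provide, for the matrix coefficients $a_i$, exactly the structure that the scalar hypotheses supply in the one-dimensional case. First I would pass to the $d$-dimensional Fourier representation
\[
\frac{1}{\det(h_n)}\,K\big(h_n^{-1}(X_i-X_j)\big)=\frac{1}{(2\pi)^d}\int_{\R^d}\widehat{K}(h_n\lambda)\,e^{-\iota\lambda(X_i-X_j)}\,d\lambda ,
\]
valid because $h_n$ is diagonal, and then expand $e^{\iota\lambda X_j}\,\overline{e^{\iota\lambda X_i}}=\big(H(X_j)(\lambda)+\phi(\lambda)\big)\big(\overline{H(X_i)(\lambda)}+\overline{\phi(\lambda)}\big)$, where $H(X_i)(\lambda)=e^{\iota\lambda X_i}-\phi(\lambda)$ and $\phi$ is the product of the innovation characteristic functions $\phi_\varepsilon(\cdot\, a_i)$. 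This produces the Hoeffding-type decomposition (\ref{decomp}),
\[
T_n(h_n)=\frac{1}{(2\pi)^d}\int_{\R^d}\widehat{K}(h_n\lambda)\,|\phi(\lambda)|^2\,d\lambda+L_n+R_n ,
\]
with $L_n$ collecting the terms linear in $H$ and $R_n$ the degenerate second-order part built from $H(X_j)(\lambda)\,\overline{H(X_i)(\lambda)}$.

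For the bias (\ref{r71}) I would split $\E T_n(h_n)-\int_{\R^d} f^2(x)\,dx$ into the smoothing error $(2\pi)^{-d}\int_{\R^d}(\widehat{K}(h_n\lambda)-1)\,|\phi(\lambda)|^2\,d\lambda$ and the dependence correction $\frac{1}{n(n-1)}\sum_{i\ne j}(2\pi)^{-d}\int_{\R^d}\widehat{K}(h_n\lambda)\,\E\big[H(X_j)(\lambda)\,\overline{H(X_i)(\lambda)}\big]\,d\lambda$. The symmetry of $K$ and $\int_{\R^d}|u|^2|K(u)|\,du<\infty$ give $|\widehat{K}(h_n\lambda)-1|\le c\,|h_n\lambda|^{2\gamma}$, while $\int_{\R^d}|\lambda|^{2\gamma}|\phi(\lambda)|^2\,d\lambda<\infty$ follows by extracting two factors $\phi_\varepsilon(\lambda\, a_0)\,\phi_\varepsilon(\lambda\, a_m)$ (respectively $\phi_\varepsilon(\lambda\, a_p)\,\phi_\varepsilon(\lambda\, a_q)$ under Assumption \ref{ass2}), using the invertibility of $a_0,a_m$ (resp. $a_p,a_q$), a change of variables, the Cauchy--Schwarz inequality and $\int_{\R^d}|\lambda|^{2\gamma}|\phi_\varepsilon(\lambda)|^2\,d\lambda<\infty$; this bounds the smoothing error by $O(\det(h_n)^{2\gamma})$. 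The dependence correction is $O(1/n)$: under Assumption \ref{ass1} the vectors $X_i,X_j$ are independent whenever $|i-j|>m$, so only $O(n)$ ordered pairs contribute; under Assumption \ref{ass2} the covariances $|\E[H(X_i)(\lambda)\,\overline{H(X_j)(\lambda)}]|$ are dominated by tail sums $\sum_{k\ge|i-j|}\sqrt{\Var(e^{\iota\lambda a_k\varepsilon_1})}$, whose short-memory summability (Definition \ref{rmk5}, guaranteed by $\sum_i|a_i|^\gamma<\infty$ together with (\ref{char4moment})) again yields $O(1/n)$ after division by $n(n-1)$.

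For the stochastic expansion (\ref{r72}), summing out one index collapses $L_n$ to $\frac1n\sum_{i=1}^n(2\pi)^{-d}\int_{\R^d}\widehat{K}(h_n\lambda)\,2\,\mathrm{Re}\big[\phi(\lambda)\,\overline{H(X_i)(\lambda)}\big]\,d\lambda$; replacing $\widehat{K}(h_n\lambda)$ by $\widehat{K}(0)=1$ and using the inversion $f(x)=(2\pi)^{-d}\int_{\R^d}\phi(\lambda)e^{-\iota\lambda x}\,d\lambda$ identifies the limit as $\frac1n\sum_i Y_i$ with $Y_i=2\big(f(X_i)-\int_{\R^d} f^2(x)\,dx\big)$, the replacement being a stationary short-memory average of mean-zero terms of $L^2$-size $O(\det(h_n)^{2\gamma})$, which contributes the $\det(h_n)^{2\gamma}/n$ term. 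The remaining work is to bound $\E|R_n|^2$: its ``Plancherel/diagonal'' part is of order $n^{-2}\det(h_n)^{-2}\int_{\R^d}K^2(h_n^{-1}x)\,dx=n^{-2}\det(h_n)^{-1}\int_{\R^d}K^2(u)\,du\asymp \tfrac{1}{n^2\det(h_n)}$ (the variance of a degenerate order-two $U$-statistic), while the shifted cross-covariances in $\E|R_n|^2$ assemble into $\eta_{n,\gamma}/n^2$; here the non-degeneracy conditions $\det(a_{i_r+r}-a_{i_r})\ne 0$ are precisely what keeps finite the $d$-dimensional integrals $\int_{\R^d}\prod|\phi_\varepsilon(\cdot)|\,d\lambda$ appearing in those covariances, just as their scalar analogues do in the proof of Theorem \ref{thm1}. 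Finally, for the central limit theorem (\ref{r73}), when $n\det(h_n)\to\infty$ each term of (\ref{r72}) multiplied by $n$ tends to $0$ (using $\eta_{n,\gamma}=o(n)$, which follows by averaging from $\sum_{i\ge\ell}|a_i|^\gamma\to 0$), so that $\sqrt{n}\,[T_n(h_n)-\E T_n(h_n)]=\frac1{\sqrt n}\sum_i Y_i+o_P(1)=\frac{2}{\sqrt n}S_n+o_P(1)$; a central limit theorem for the bounded short-memory functional $f(X_i)-\E f(X_i)$ of the linear process (for Assumption \ref{ass1} through its $m$-dependent structure, in general through the same Fourier/projection scheme) then delivers $N(0,4\sigma^2)$.

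I expect the real obstacle to be the $L^2$ control of the degenerate term $R_n$: reproducing, with $d\times d$ matrix coefficients and $d$-dimensional Fourier integrals, the chain of estimates that occupies most of Section \ref{proof} in the scalar case, and in particular verifying that the invertibility built into Assumptions \ref{ass1}--\ref{ass2} (of $a_0,a_m$ or $a_p,a_q$, and of the lag differences $a_{i_r+r}-a_{i_r}$) suffices to force the integrability of every product of characteristic functions that arises and to produce the shift-summable covariance bounds controlled by $\eta_{n,\gamma}$. The bias estimate, the identification of $L_n$ with $\frac1n\sum_iY_i$, and the final central limit theorem should then be routine multivariate transcriptions of the arguments behind Theorems \ref{thm1} and \ref{thm2}.
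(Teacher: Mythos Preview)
Your proposal is correct and follows exactly the route the paper intends: the paper does not give a separate proof of Theorem \ref{thm5} but simply states that it follows by ``similar arguments as in the proof of Theorems \ref{thm1} and \ref{thm2} with some proper modifications,'' and your outline spells out precisely those modifications --- the $d$-dimensional Fourier inversion, the Hoeffding decomposition (\ref{decomp}), the projection estimates of Lemmas \ref{lma1}--\ref{lma2}, and the role of the invertibility hypotheses on $a_0,a_m$ (or $a_p,a_q$) and on the lag differences $a_{i_r+r}-a_{i_r}$ in making the resulting $\R^d$-integrals of products of $|\phi_\varepsilon(\cdot)|$ finite via changes of variables. Your identification of the $L^2$ control of the degenerate piece $R_n$ as the only nontrivial step is accurate, and the way you propose to handle it mirrors Step 3 of the proof of Theorem \ref{thm1}.
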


\bigskip

\begin{theorem} \label{thm6} 
Under the assumption \ref{ass3}, we further assume that $f$ is bounded and $\int_{\R^d} |\widehat{K}(\lambda)|\, d\lambda<\infty$. Then there exist positive constants $c_3$ and $c_4$ such that
\begin{align} \label{r74}
\Big|\E T_n(h_n)-\int_{\R^d} f^2(x)\, dx \Big|\leq c_3\left(\frac{1}{n}+\det(h_n)^{2\gamma}\right),
\end{align}
\begin{align}  \label{r75}
\E\Big|T_n(h_n)-\E T_n(h_n)-\frac{1}{n}\sum^n_{i=1}Y_i\Big| \leq c_4 \Big(\frac{1}{n\det(h_n)}+\frac{\det(h_n)^{\gamma}}{\sqrt{n}}\Big)
\end{align}
and, if additionally $\sqrt{n}\det(h_n)\to \infty$ as $n\to\infty$,
\begin{align} \label{r76}
\sqrt{n}\, \Big[T_n(h_n)-\E T_n(h_n)\Big]\overset{\mathcal{L}}{\longrightarrow} N(0,4\sigma^2)
\end{align}
for some $\sigma^2\in(0,\infty)$.

\end{theorem}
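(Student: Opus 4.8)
The plan is to run the proof of Theorem~\ref{thm2} in the multivariate setting, replacing the one-dimensional Fourier inversion by its $\R^d$ counterpart and carrying the matrix bandwidth $h_n$ through the computation. Since $\int_{\R^d}|\widehat K(\lambda)|\,d\lambda<\infty$, Fourier inversion applied to $K$ together with the change of variables $\mu=h_n^{-1}\lambda$ gives
\[
\frac{1}{\det(h_n)}\,K\big(h_n^{-1}(X_i-X_j)\big)=\frac{1}{(2\pi)^d}\int_{\R^d}\widehat K(h_n\mu)\,e^{-\iota\,\mu\cdot(X_i-X_j)}\,d\mu ,
\]
which detaches the kernel from the increment $X_i-X_j$. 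Writing $e^{\iota\,\mu\cdot X_i}=H(X_i)(\mu)+\phi(\mu)$ and expanding $e^{-\iota\,\mu\cdot X_i}e^{\iota\,\mu\cdot X_j}=\overline{H(X_i)(\mu)+\phi(\mu)}\,\big(H(X_j)(\mu)+\phi(\mu)\big)$, summation over $1\le i\neq j\le n$ produces the $d$-dimensional analogue of the Hoeffding-type decomposition (\ref{decomp}): a deterministic part (the $|\phi(\mu)|^2$ term, absorbed into the bias), a linear H\'{a}jek part (the $\overline{H(X_i)(\mu)}\,\phi(\mu)$ and $\overline{\phi(\mu)}\,H(X_j)(\mu)$ terms), and a degenerate part (the $\overline{H(X_i)(\mu)}\,H(X_j)(\mu)$ term, recentered). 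As in the univariate case, the degenerate part is not a centered $U$-statistic because the $H(X_i)$ are dependent, so the projection and chain-rule bookkeeping of the one-dimensional proof is reused to treat it.

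For the bias (\ref{r74}) I would use Parseval's identity $\int_{\R^d}f^2(x)\,dx=(2\pi)^{-d}\int_{\R^d}|\phi(\mu)|^2\,d\mu$ and split $\E T_n(h_n)-\int_{\R^d}f^2$ into (i) a smoothing error $(2\pi)^{-d}\int[\widehat K(h_n\mu)-1]|\phi(\mu)|^2\,d\mu$, bounded by $c\,\det(h_n)^{2\gamma}$ from $|\widehat K(h_n\mu)-\widehat K(0)|\le c\,|h_n\mu|^{2\gamma}$ and $\int_{\R^d}|\lambda|^{2\gamma}|\phi(\lambda)|^2\,d\lambda<\infty$; and (ii) a dependence error, which for a pair with $|i-j|=r$ equals $(2\pi)^{-d}\int\widehat K(h_n\mu)\big[\prod_\ell\phi_\varepsilon\big((a_\ell-a_{\ell+r})^{T}\mu\big)-|\phi(\mu)|^2\big]\,d\mu$ and whose aggregate $\tfrac{2}{n(n-1)}\sum_{r\ge1}(n-r)\,[\,\cdots]$ is $O(1/n)$ by the short-memory property (Definition~\ref{rmk5}) and $\sum_i|a_i|^{\gamma}<\infty$. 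The smoothness estimate $\int_{\R^d}|\lambda|^{2\gamma}|\phi(\lambda)|^2\,d\lambda<\infty$ used in (i) follows from the innovation hypothesis $\int_{\R^d}|\lambda|^{2\gamma}|\phi_\varepsilon(\lambda)|^2\,d\lambda<\infty$ via the change of variables $\nu=a_0^{T}\lambda$, admissible because $\det(a_0)\neq0$, together with $|\phi(\lambda)|\le|\phi_\varepsilon(a_0^{T}\lambda)|$.

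For the linear approximation (\ref{r75}), the H\'{a}jek part equals $\tfrac{2}{n}\sum_{i=1}^n\big[(K_{h_n}\ast f)(X_i)-\int(K_{h_n}\ast f)(x)f(x)\,dx\big]$ with $K_{h_n}(y)=\det(h_n)^{-1}K(h_n^{-1}y)$, because $(2\pi)^{-d}\int\widehat K(h_n\mu)\phi(\mu)e^{-\iota\,\mu\cdot x}\,d\mu=(K_{h_n}\ast f)(x)$ and the relevant $\mu$-integral is real by the evenness of $\widehat K$; since $\int K=1$, $h_n\to0$ and $f\in L^1\cap L^\infty\subset L^2$, this equals $\tfrac{1}{n}\sum_iY_i$ up to $\tfrac{2}{n}\sum_i\{g_n(X_i)-\E g_n(X_i)\}$ with $g_n=K_{h_n}\ast f-f$, whose $L^1$ norm is $\lesssim n^{-1/2}\det(h_n)^{\gamma}$ by the short-memory control of the variance of partial sums of bounded functionals, $\Var\big(\sum_i g_n(X_i)\big)\lesssim n\,\det(h_n)^{2\gamma}$, which in turn rests on $\|g_n\|_2^2=(2\pi)^{-d}\int|\widehat K(h_n\mu)-1|^2|\phi(\mu)|^2\,d\mu\lesssim\det(h_n)^{2\gamma}$. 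For the degenerate part I would use the identity $\sum_{i\neq j}\overline{H(X_i)(\mu)}H(X_j)(\mu)=\big|\sum_iH(X_i)(\mu)\big|^2-\sum_i|H(X_i)(\mu)|^2$ and integrate against $(2\pi)^{-d}\widehat K(h_n\mu)$: after recentering, the $\sum_i|H(X_i)(\mu)|^2$ term contributes $O\big(\tfrac{1}{n(n-1)}\cdot n\int|\widehat K(h_n\mu)|\,d\mu\big)=O\big(1/(n\det(h_n))\big)$, and $|\sum_iH(X_i)(\mu)|^2$ contributes $O\big(\tfrac{1}{n(n-1)}\int|\widehat K(h_n\mu)|\,\Var(\sum_ie^{\iota\,\mu\cdot X_i})\,d\mu\big)$, which is again $O\big(1/(n\det(h_n))\big)$ once one shows $\int|\widehat K(h_n\mu)|\,\Var(\sum_{i=1}^ne^{\iota\,\mu\cdot X_i})\,d\mu\lesssim n\,\det(h_n)^{-1}$; this is the point where the short-memory summability of $\sum_{r\ge1}|\operatorname{Cov}(e^{\iota\,\mu\cdot X_0},e^{\iota\,\mu\cdot X_r})|$, controlled via Definition~\ref{rmk5} and $\E|e^{\iota\lambda\varepsilon_1}-\phi_\varepsilon(\lambda)|^{2}\le c_{\gamma,2}(|\lambda|^{2\gamma}\wedge1)$, together with the non-singularity of $a_0$, $a_p$ and of the differences $a_{i_r}-a_{i_r+r}$, enters. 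Combining the three contributions gives the bound $c_4\big(1/(n\det(h_n))+n^{-1/2}\det(h_n)^{\gamma}\big)$ of (\ref{r75}).

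Finally, for the central limit theorem (\ref{r76}): when $\sqrt n\,\det(h_n)\to\infty$ the right-hand side of (\ref{r75}) is $o(n^{-1/2})$, so $\sqrt n\,[T_n(h_n)-\E T_n(h_n)]$ has the same limit in law as $n^{-1/2}\sum_{i=1}^nY_i=2\,n^{-1/2}\sum_{i=1}^n\big(f(X_i)-\int_{\R^d}f^2\big)$; since $Y_i$ is a bounded --- hence square-integrable --- functional of the short-memory multivariate linear process $\{X_n\}$, the central limit theorem for such functionals yields $n^{-1/2}\sum_{i=1}^nY_i\overset{\mathcal{L}}{\longrightarrow}N(0,4\sigma^2)$ with $\sigma^2=\lim_{n\to\infty}n^{-1}\Var(S_n)\in(0,\infty)$, the positivity coming from the non-degeneracy of $\{X_n\}$. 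The main obstacle, as above, is the degenerate-part estimate: matching the i.i.d.\ rate in the dependent setting requires establishing $\int|\widehat K(h_n\mu)|\,\Var(\sum_{i=1}^ne^{\iota\,\mu\cdot X_i})\,d\mu\lesssim n\,\det(h_n)^{-1}$, i.e.\ that the covariance sum is summable in $r$ and stays integrable against $|\widehat K(h_n\cdot)|$ up to the factor $\det(h_n)^{-1}$ --- this is exactly where the one-dimensional ``$a_i\neq0$'' conditions must be upgraded to the determinant conditions of Assumption~\ref{ass3} and where the chain-rule and change-of-variables bookkeeping is most delicate.
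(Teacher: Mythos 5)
Your proposal is correct and follows essentially the same route as the paper, which proves Theorem \ref{thm6} by rerunning the argument for Theorem \ref{thm2}: Fourier inversion to detach the kernel, the decomposition into bias, H\'{a}jek part ($N_n$) and degenerate part ($D_n$), the bound on $\E|D_n|$ via the identity $\sum_{i\neq j}\overline{H(X_i)}H(X_j)=|n(\phi_n-\phi)|^2-\sum_i|H(X_i)|^2$ together with the projection estimate of Lemma \ref{lma2}, the $K_{h_n}\ast f-f$ treatment of $N_n-\overline{N}_n$, and the CLT for $n^{-1/2}\sum_i Y_i$ via Wu's projection criterion. You also correctly identify the only genuinely multivariate point, namely that the invertibility conditions $\det(a_0)\neq0$, $\det(a_p)\neq0$, $\det(a_{i_r}-a_{i_r+r})\neq0$ of Assumption \ref{ass3} are what make the changes of variables and the integrability of the characteristic-function bounds go through.
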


\bigskip

\begin{remark}
In the i.i.d case, that is, $\det(a_0)\neq 0$ and $a_i=d\times d\; \text{zero matrix}$ for all $i\geq 1$, and  $\int_{\R^d}|\lambda|^{2\gamma} |\phi_{\varepsilon}(\lambda)|^2\, d\lambda<\infty$ for some $\gamma\in(0,1]$, then results in Theorem \ref{thm5} still hold with the right hand side of (\ref{r71}) replaced by a constant multiple of $\det(h_n)^{2\gamma}$ and $\eta_{n,\gamma}$ in (\ref{r72}) by $0$, respectively.
\end{remark}

\bigskip

\section{Proofs}\label{proof}

In this section, the proofs of our main results, Theorems \ref{thm1} and \ref{thm2}, are given. First we provide two useful lemmas based on the projection method. 

For each $i\in\Z$, let  $\mathcal{F}_i$ be the $\sigma$-field generated by $\{\varepsilon_k:\, k\le i\}$. Given an integrable complex-valued random variable $Z$, we define the following projection operator $\mathcal{P}_i$ as 
\begin{align} \label{proj}
\mathcal{P}_i Z=\E[Z|\mathcal{F}_i]-\E[Z|\mathcal{F}_{i-1}]
\end{align}
for each $i\in\Z$. It is easy to see that 
\begin{align}
\E[\mathcal{P}_i Z\, \mathcal{P}_j W]=0\label{projprod}
\end{align} 
for any two integrable complex-valued random variables $Z$ and $W$ if $i\neq j$.  

The estimation in the following lemma is useful in deriving the order of the bias of the kernel estimator (\ref{kernel}).
\begin{lemma} \label{lma1} Suppose $\sum\limits^{\infty}_{i=0}|a_i|^{\gamma}<\infty$ and $\E|e^{\iota \lambda \varepsilon_1}-\phi_{\varepsilon}(\lambda)|^{2}\leq c_{\gamma,2} \left(|\lambda|^{2\gamma}\wedge 1\right)$ for some $\gamma \in (0,1]$. Then there exists a positive constant $c$ such that
\begin{align*} \label{lminq}
&\sum_{1\leq i\neq j\leq n} \left| \E\left[H(X_i)(\lambda)\overline{H(X_j)(\lambda)}\right]\right| \nonumber\\
&\qquad\qquad \leq c\, n\, |\phi_{\varepsilon}(\lambda a_{0})|\Big[ |\lambda|^{2\gamma}|\phi_{\varepsilon}(\lambda a_{0})|+|\lambda|^{\gamma}|\phi_{\varepsilon}(\lambda a_p)| + 
\sum^p_{r=1}\prod^{\infty}_{\ell=1} |\phi_{\varepsilon}(\lambda (a_{\ell}-a_{\ell+r}))|\Big],
\end{align*}
where $p=\min\{i\in\N:\, a_i\neq 0\}<\infty$. Moreover, the right hand side of the above inequality is equal to $0$ if $a_i=0$ for all $i\in\N$.
\end{lemma}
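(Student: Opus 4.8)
I would start from the definition $H(X_i)(\lambda)=e^{\iota\lambda X_i}-\phi(\lambda)$ and compute the cross-covariance $\E[H(X_i)(\lambda)\overline{H(X_j)(\lambda)}]$ for $i\neq j$ directly in terms of the characteristic function of the innovations. Writing $X_i=\sum_{\ell\ge 0}a_\ell\varepsilon_{i-\ell}$ and $X_j=\sum_{\ell\ge 0}a_\ell\varepsilon_{j-\ell}$ with, say, $i<j$, the difference $X_i-X_j$ (which is what enters after the $\phi(\lambda)$ terms cancel) is a linear combination of the i.i.d. innovations $\{\varepsilon_k\}$; collecting coefficients, innovations with index $k\le i$ that appear in \emph{both} sums contribute a factor $\phi_\varepsilon(\lambda(a_{i-k}-a_{j-k}))$, while innovations appearing in only one sum contribute $\phi_\varepsilon(\lambda a_m)$ or $\overline{\phi_\varepsilon(\lambda a_m)}$ factors. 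The key quantity becomes a product over $\ell\ge 0$ of terms of the form $|\phi_\varepsilon(\lambda(a_\ell-a_{\ell+r}))|$ where $r=j-i\ge 1$, together with leftover single-sum factors; so $|\E[H(X_i)\overline{H(X_j)}]|\le |\phi(\lambda)|^2 \cdot(\text{something})$, but more usefully it factors so that $|\phi_\varepsilon(\lambda a_0)|$ can be pulled out as a prefactor from each sum.

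The main work is then to bound the double sum $\sum_{1\le i\neq j\le n}$ by grouping terms according to $r=|i-j|$. First I would isolate the contribution from the smallest nonzero coefficient: since $p=\min\{i\in\N: a_i\ne 0\}$, for $r<p$ the pair of sums $X_i$ and $X_j$ share the index $\varepsilon_{i}$ only through mismatched coefficients $a_0$ vs.\ $a_r=0$, which produces a clean factor like $|\phi_\varepsilon(\lambda a_0)|\cdot|\phi_\varepsilon(\lambda a_0)|$ or $|\phi_\varepsilon(\lambda a_0)|\cdot|\phi_\varepsilon(\lambda a_p)|$ from the boundary, and the remaining product $\prod_{\ell\ge 1}|\phi_\varepsilon(\lambda(a_\ell-a_{\ell+r}))|$ over the overlapping range. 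For $r\ge p$ I would use the hypothesis $\E|e^{\iota\lambda\varepsilon_1}-\phi_\varepsilon(\lambda)|^2\le c_{\gamma,2}(|\lambda|^{2\gamma}\wedge 1)$, which via $1-|\phi_\varepsilon(\lambda)|^2\le c_{\gamma,2}|\lambda|^{2\gamma}$ and the elementary bound $|\phi_\varepsilon(\mu)|\le \exp(-c(1-|\phi_\varepsilon(\mu)|^2))$-type inequality lets me convert a product $\prod_\ell|\phi_\varepsilon(\lambda a_\ell)|$ into something controlled by $\exp(-c\sum_\ell(|\lambda a_\ell|^{2\gamma}\wedge 1))$; combined with $\sum_\ell|a_\ell|^\gamma<\infty$ this product is bounded, and summing the geometric-type decay in $r$ against the leftover $|\lambda|^\gamma$ factors from the mismatched boundary term gives the $|\lambda|^\gamma|\phi_\varepsilon(\lambda a_p)|$ term. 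The terms where one sum reaches down to an index the other does not reach produce, after using $|e^{\iota u}-1|\le |u|^\gamma\wedge 2$ on the coefficient differences, the $|\lambda|^{2\gamma}|\phi_\varepsilon(\lambda a_0)|$ contribution.

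The step I expect to be the genuine obstacle is the bookkeeping of the overlap structure: keeping track of exactly which innovation indices contribute mismatched-coefficient factors $\phi_\varepsilon(\lambda(a_\ell-a_{\ell+r}))$, which contribute pure $\phi_\varepsilon(\lambda a_m)$ factors, and making sure that after factoring out one copy of $|\phi_\varepsilon(\lambda a_0)|$ the residual products over $\ell$ are simultaneously (a) summable in $r$ and (b) each individually bounded by the three displayed terms. In particular, showing that $\sum_{r\ge 1}\prod_{\ell\ge 1}|\phi_\varepsilon(\lambda(a_\ell-a_{\ell+r}))|$ is dominated by $\sum_{r=1}^p\prod_{\ell=1}^\infty|\phi_\varepsilon(\lambda(a_\ell-a_{\ell+r}))|$ plus decaying tails requires using $\sum_\ell|a_\ell-a_{\ell+r}|^\gamma\le 2\sum_\ell|a_\ell|^\gamma$ and the fact that for $r\ge p$ at least one factor stays uniformly away from $1$; this is where the case split on $r$ versus $p$ and the choice of the auxiliary indices does the real work. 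Once these products are controlled the final summation in $i,j$ contributes the factor $n$ and the lemma follows; the degenerate case $a_i=0$ for all $i\in\N$ makes $X_i=a_0\varepsilon_i$ i.i.d., so the cross-covariance vanishes identically, giving the last assertion.
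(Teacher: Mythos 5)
There is a genuine gap at the heart of your plan: you never supply a mechanism that exploits the cancellation between $\E\,e^{\iota\lambda(X_i-X_j)}$ and $|\phi(\lambda)|^2$, and without it the bound cannot come out of order $n$. Writing $r=j-i$, the exact covariance is
\[
\E\big[H(X_i)(\lambda)\overline{H(X_j)(\lambda)}\big]=\prod_{\ell\ge 0}\phi_{\varepsilon}\big(\lambda(a_\ell-a_{\ell+r})\big)\prod_{\ell=0}^{r-1}\phi_{\varepsilon}(-\lambda a_\ell)\;-\;\prod_{\ell\ge 0}|\phi_{\varepsilon}(\lambda a_\ell)|^2 ,
\]
and your proposal bounds the two products separately (``$\le|\phi(\lambda)|^2\cdot(\text{something})$'', products of $|\phi_\varepsilon(\lambda(a_\ell-a_{\ell+r}))|$, etc.). That triangle-inequality route is doomed: the second product is $|\phi(\lambda)|^2$, independent of $r$, so summing over the $\sim n^2$ pairs gives an $n^2$ bound, not the $c\,n(\cdots)$ of the lemma. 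The same problem defeats your claim that $\sum_{r\ge1}\prod_{\ell\ge1}|\phi_{\varepsilon}(\lambda(a_\ell-a_{\ell+r}))|$ is dominated by its first $p$ terms plus ``decaying tails'': for large $r$ one has $a_{\ell+r}\approx 0$, so each product is close to the fixed positive number $\prod_{\ell\ge1}|\phi_{\varepsilon}(\lambda a_\ell)|$ and the sum over $r$ diverges linearly. There is also no ``geometric-type decay in $r$'' available under the sole hypothesis $\sum_i|a_i|^{\gamma}<\infty$; the only usable decay in $r$ is the factor $|a_r|^{\gamma}$ coming from $\sqrt{1-|\phi_{\varepsilon}(\lambda a_r)|^2}\le c|\lambda|^{\gamma}|a_r|^{\gamma}$, and that factor appears only after the cancellation has been made explicit. (Your check of the degenerate case $a_i=0$ for $i\in\N$ is fine, but it is exactly the case where the cancellation is total, which should have been the warning sign.)

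The paper's proof supplies the missing mechanism through the projection (martingale-difference) decomposition: $H(X_i)=\sum_{k\le i}\mathcal{P}_k H(X_i)$ with $\E[\mathcal{P}_kZ\,\mathcal{P}_\ell W]=0$ for $k\neq\ell$, so the covariance becomes $\sum_{k\le i}\E[\mathcal{P}_kH(X_i)\,\mathcal{P}_k\overline{H(X_j)}]$, and each summand factors (by independence of the innovations) into a product of characteristic-function factors times the single-index covariance $\E[(e^{\iota\lambda a_{i-k}\varepsilon_0}-\phi_{\varepsilon}(\lambda a_{i-k}))(e^{-\iota\lambda a_{j-k}\varepsilon_0}-\phi_{\varepsilon}(-\lambda a_{j-k}))]$, which by Cauchy--Schwarz and the hypothesis is at most $c|\lambda|^{2\gamma}|a_{i-k}|^{\gamma}|a_{j-k}|^{\gamma}$. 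This small factor is what makes the triple sum (split into $k<i$; $k=i$ with $j\ge i+2$, further split at $j-i\le p$ versus $j-i>p$; and $k=i$, $j=i+1$) summable and of order $n$, producing exactly the three displayed terms. You could equally well telescope the difference of the two infinite products factor by factor and bound each increment by $|\mathrm{Cov}(e^{\iota\lambda a_\ell\varepsilon},e^{\iota\lambda a_{\ell+r}\varepsilon})|\le c|\lambda|^{2\gamma}|a_\ell|^{\gamma}|a_{\ell+r}|^{\gamma}$ — but that telescoping is precisely the projection argument in disguise, and it is the step your write-up leaves out.
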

\begin{proof}  It suffices to show the case when $a_i\neq 0$ for some $i\in\N$. Recall the definition of projection operator $\mathcal{P}_k$ in (\ref{proj}). Applying telescoping, (\ref{projprod}) and the triangle inequality, we have 
\begin{align*}
\sum_{1\leq i\neq j\leq n} \left|\E\left[H(X_i)(\lambda)\overline{H(X_j)(\lambda)}\right]\right|
&\leq 2\sum_{1\leq i<j\leq n}\sum^i_{k=-\infty}\left|\E\left[\mathcal{P}_{k}H(X_i)(\lambda)\mathcal{P}_k\overline{H(X_j)(\lambda)}\right]\right| \\
&=2\sum_{1\leq i<j\leq n}\sum^i_{k=-\infty}\left|\E\left[\mathcal{P}_{0}H(X_{i-k})(\lambda)\mathcal{P}_0\overline{H(X_{j-k})(\lambda)}\right]\right|.
\end{align*}
Note that 
\begin{align*}
\E\left[\mathcal{P}_{0}H(X_{i-k})(\lambda)\mathcal{P}_0\overline{H(X_{j-k})(\lambda)}\right]
&=\E\Big[e^{\iota\sum\limits^{\infty}_{\ell=1}\lambda (a_{\ell+i-k}-a_{\ell+j-k}) \varepsilon_{-\ell}} \Big] \Big[\prod^{i-k-1}_{\ell=0}|\phi_{\varepsilon}(\lambda a_{\ell})|^2\prod^{j-k-1}_{\ell=i-k} \phi_{\varepsilon}(-\lambda a_{\ell}) \Big] \\
&\quad\times\E\left[(e^{\iota  \lambda a_{i-k} \varepsilon_0}-\phi_{\varepsilon}(\lambda a_{i-k}))(e^{-\iota  \lambda a_{j-k} \varepsilon_0}-\phi_{\varepsilon}(-\lambda a_{j-k}))\right].
\end{align*}
Next, we decompose the triple sum $\sum^{n-1}_{i=1}\sum^n_{j=i+1}\sum^{i}_{k=-\infty}$ into the sum with $k<i$, the sum with $k=i$ and $i+2\le j\le n$ and the sum with $k=i$ and $j=i+1$. By Cauchy-Schwartz inequality, we have 
\begin{align*}
&\sum_{1\leq i\neq j\leq n} \left| \E\left[H(X_i)(\lambda)\overline{H(X_j)(\lambda)}\right] \right|\\
&\leq 2\sum^{n-1}_{i=1}\sum^n_{j=i+1}\sum^{i-1}_{k=-\infty} |\phi_{\varepsilon}(\lambda a_{0})|^2 \sqrt{1-|\phi_{\varepsilon}(\lambda a_{i-k})|^2}\sqrt{1-|\phi_{\varepsilon}(\lambda a_{j-k})|^2}\\
&\quad+ 2\sum^{n-1}_{i=1}\sum^n_{j=i+2}\prod^{j-i-1}_{\ell=0} |\phi_{\varepsilon}(\lambda a_{\ell})| \prod^{\infty}_{\ell=1} |\phi_{\varepsilon}(\lambda (a_{\ell}-a_{\ell+j-i}))| \sqrt{1-|\phi_{\varepsilon}(\lambda a_0)|^2} \sqrt{1-|\phi_{\varepsilon}(\lambda a_{j-i})|^2}\\
&\quad\qquad+ 2\sum^{n-1}_{i=1} \Big(|\phi_{\varepsilon}(\lambda a_0)|\prod^{\infty}_{\ell=1} |\phi_{\varepsilon}(\lambda (a_{\ell}-a_{\ell+1}))|\Big).
\end{align*}
Note that the second term on the right hand side of the above inequality is less than
\begin{align*}
&2\sum^{n-1}_{i=1}\sum^n_{j=i+1+p}|\phi_{\varepsilon}(\lambda a_{0})| |\phi_{\varepsilon}(\lambda a_{p})| \sqrt{1-|\phi_{\varepsilon}(\lambda a_{j-i})|^2}\\
&\qquad\qquad+2\sum^{n-1}_{i=1}\sum^{i+p}_{j=i+2}|\phi_{\varepsilon}(\lambda a_{0})| \prod^{\infty}_{\ell=1} |\phi_{\varepsilon}(\lambda (a_{\ell}-a_{\ell+j-i}))| \sqrt{1-|\phi_{\varepsilon}(\lambda a_{j-i})|^2}.
\end{align*}

Therefore, using the conditions of the lemma,
\begin{align*}
\sum_{1\leq i\neq j\leq n} \left| \E\left[H(X_i)(\lambda)\overline{H(X_j)(\lambda)}\right] \right|
&\leq c_1|\lambda|^{2\gamma}\sum^{n-1}_{i=1}\sum^n_{j=i+1}\sum^{i-1}_{k=-\infty} |\phi_{\varepsilon}(\lambda a_0)|^2 |a_{i-k}|^{\gamma}|a_{j-k}|^{\gamma}\\
&\qquad+ c_1|\lambda|^{\gamma}\sum^{n-1}_{i=1}\sum^n_{j=i+1+p}|\phi_{\varepsilon}(\lambda a_{0})||\phi_{\varepsilon}(\lambda a_{p})| |a_{j-i}|^{\gamma}\\
&\qquad\qquad+ c_1\sum^{n-1}_{i=1} |\phi_{\varepsilon}(\lambda a_0)|\sum^p_{r=1}\prod^{\infty}_{\ell=1} |\phi_{\varepsilon}(\lambda (a_{\ell}-a_{\ell+r}))|.
\end{align*}
This gives the desired result.
\end{proof}
\begin{remark}
Note that in Lemma \ref{lma1}, if the linear process $\{X_n\}$ is $m$-dependent, then $a_m\ne 0, m\ge p$ and $\sum\limits_{r=1}^p\prod\limits_{\ell=1}^\infty |\phi_\varepsilon(\lambda(a_\ell-a_{\ell+r}))|\le p|\phi_\varepsilon(\lambda a_m)|$. If $\lim\limits_{i\rightarrow \infty}a_i=0$, $a_0\ne 0$ and $a_i\neq 0$ for infinite many $i\in \N$, then for each $r\in \{1,2,\cdots, p\}$,  $q_r=\min\{i\in \N: a_i-a_{i+r}\ne 0\}<\infty$ and $\sum\limits_{r=1}^p\prod\limits_{\ell=1}^\infty |\phi_\varepsilon(\lambda(a_\ell-a_{\ell+r}))|\le \sum\limits_{r=1}^p |\phi_\varepsilon(\lambda(a_{q_r}-a_{q_r+r}))|$.
\end{remark}

The next lemma is needed in the proof of Theorem \ref{thm2}. 
\begin{lemma}  \label{lma2} 
For any $m\in\N$ and $\gamma \in (0,1]$, if $\E|e^{\iota \lambda \varepsilon_1}-\phi_{\varepsilon}(\lambda)|^{2}\leq c_{\gamma,2} \left(|\lambda|^{2\gamma}\wedge 1\right)$, then there exists a positive constant $c$ such that
\[
\E\Big[\big|\phi_{n}(\lambda)-\phi(\lambda)\big|^2\Big]\leq \frac{c}{n}\Big[ m+2\big(\sum_{i=m}^{\infty}|a_i|^{\gamma}\big)|\lambda|^{\gamma}\prod_{j=0}^{m-1}|\phi_{\varepsilon}(\lambda a_j)|\Big]^2.
\]
\end{lemma}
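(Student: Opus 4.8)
The plan is to write $\phi_n(\lambda)-\phi(\lambda)=\frac1n\sum_{i=1}^nH(X_i)(\lambda)$, a centered average of the stationary sequence $\{H(X_i)(\lambda)\}_i$, and to bound its second moment through the autocovariances, which in turn are controlled by the projection machinery of the proof of Lemma \ref{lma1}. First I would expand the square and use stationarity together with $|\E[H(X_i)(\lambda)\overline{H(X_j)(\lambda)}]|=|\gamma(|i-j|)|$, where $\gamma(\ell):=\E[H(X_\ell)(\lambda)\overline{H(X_0)(\lambda)}]$, to obtain
\[
\E\big|\phi_n(\lambda)-\phi(\lambda)\big|^2\le\frac1n\Big(\gamma(0)+2\sum_{\ell\ge1}|\gamma(\ell)|\Big),
\]
so that it remains to control the right-hand side.

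Next I would telescope $H(X_s)(\lambda)=\sum_{k\le s}\mathcal{P}_ke^{\iota\lambda X_s}$ (as in Lemma \ref{lma1}) and use orthogonality (\ref{projprod}) and the Cauchy--Schwarz inequality to get $\gamma(0)=\sum_{t\ge0}d_t^2$ and $|\gamma(\ell)|\le\sum_{t\ge0}d_td_{t+\ell}$ for $\ell\ge1$, where $d_t:=\|\mathcal{P}_0e^{\iota\lambda X_t}\|_2$; here stationarity is used to shift every projection index back to $0$, and only indices $k\le0$ survive against $H(X_0)$. Summing the resulting double series then yields $\gamma(0)+2\sum_{\ell\ge1}|\gamma(\ell)|\le\big(\sum_{t\ge0}d_t\big)^2$. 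From the explicit form of $\mathcal{P}_0e^{\iota\lambda X_t}$ computed as in Lemma \ref{lma1} — a modulus-one $\mathcal{F}_{-1}$-measurable exponential times the deterministic scalar $\prod_{j=0}^{t-1}\phi_\varepsilon(\lambda a_j)$ times $e^{\iota\lambda a_t\varepsilon_0}-\phi_\varepsilon(\lambda a_t)$, the last factor being independent of the first — together with $\E|e^{\iota\mu\varepsilon_1}-\phi_\varepsilon(\mu)|^2=1-|\phi_\varepsilon(\mu)|^2$, one gets for every $t\ge0$ (empty product $=1$)
\[
d_t=\Big(\prod_{j=0}^{t-1}|\phi_\varepsilon(\lambda a_j)|\Big)\sqrt{1-|\phi_\varepsilon(\lambda a_t)|^2}.
\]

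Finally I would split $\sum_{t\ge0}d_t=\sum_{t=0}^{m-1}d_t+\sum_{t\ge m}d_t$: each of the first $m$ terms is $\le1$ since every factor is $\le1$, so that part is $\le m$; for $t\ge m$ I keep only the factors $j\le m-1$ in the product and bound the remaining factor by the hypothesis, $\sqrt{1-|\phi_\varepsilon(\lambda a_t)|^2}\le\sqrt{c_{\gamma,2}}(|\lambda a_t|^\gamma\wedge1)\le\sqrt{c_{\gamma,2}}|\lambda|^\gamma|a_t|^\gamma$, whence $\sum_{t\ge m}d_t\le\sqrt{c_{\gamma,2}}|\lambda|^\gamma\big(\prod_{j=0}^{m-1}|\phi_\varepsilon(\lambda a_j)|\big)\sum_{i\ge m}|a_i|^\gamma$. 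Squaring the resulting bound on $\sum_{t\ge0}d_t$ and absorbing $\sqrt{c_{\gamma,2}}$ into the generic constant $c$ (using $(m+a)^2\le\kappa^2(m+a/\kappa)^2$ for $\kappa\ge1$ to recover the constant $2$ written in the statement) gives the assertion, the case $\sum_{i\ge m}|a_i|^\gamma=\infty$ being vacuous. The whole argument is bookkeeping on top of Lemma \ref{lma1}; the one place needing care is the passage $\gamma(0)+2\sum_{\ell\ge1}|\gamma(\ell)|\le(\sum_t d_t)^2$, i.e. arranging the projection decompositions of the two shifted copies $H(X_\ell)$ and $H(X_0)$ so that only $k\le0$ contributes and the double sum over $\ell$ and $t$ reassembles into a perfect square, while tracking the product factors $\prod_{j<t}|\phi_\varepsilon(\lambda a_j)|$ carefully enough that the split at $t=m$ produces exactly the stated expression.
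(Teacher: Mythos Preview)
Your proposal is correct and follows essentially the same route as the paper. Both arguments reduce to the inequality $\E|\phi_n(\lambda)-\phi(\lambda)|^2\le \frac{c}{n}\big(\sum_{t\ge0}\|\mathcal{P}_0H(X_t)\|_2\big)^2$, compute $\|\mathcal{P}_0H(X_t)\|_2=\prod_{j=0}^{t-1}|\phi_\varepsilon(\lambda a_j)|\,\sqrt{1-|\phi_\varepsilon(\lambda a_t)|^2}$ exactly as you do, and then split the sum at $t=m$; the only difference is that you make the passage to $(\sum_t d_t)^2$ explicit via autocovariances and the identity $\gamma(0)+2\sum_{\ell\ge1}|\gamma(\ell)|\le(\sum_t d_t)^2$, whereas the paper simply states $\sum_{i=0}^\infty\|\mathcal{P}_0H(X_i)\|_2\le m+c_\gamma(\sum_{i\ge m}|a_i|^\gamma)|\lambda|^\gamma\prod_{j=0}^{m-1}|\phi_\varepsilon(\lambda a_j)|$ and ends the proof, leaving that step implicit.
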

\begin{proof} 
Obviously, $\E H(X_1)=0$, $\E |H(X_1)|^2 <\infty$ and
\[
n(\phi_{n}(\lambda)-\phi(\lambda))=\sum\limits_{i=1}^{n}\left[e^{\iota \lambda  X_i}-\mathbb{E}\, e^{\iota \lambda  X_i}\right]=\sum\limits_{i=1}^{n} H(X_i).
\]

Note that
\[
\mathcal{P}_{0}H(X_i)=\E[H(X_i)|\mathcal{F}_0]-\E[H(X_i)|\mathcal{F}_{-1}]
\]
for each $i\in\N$. Then,
\begin{align*}
\mathcal{P}_{0}H(X_i)
&=\E[e^{\iota \lambda  X_i}|\mathcal{F}_0]-\E[e^{\iota \lambda  X_i}|
\mathcal{F}_{-1}]\nonumber \\
&=e^{\iota \sum\limits_{j=i}^{\infty} \lambda a_j \varepsilon_{i-j}}\E\Big[e^{\iota \sum\limits_{j=0}^{i-1} \lambda a_j \varepsilon_{i-j}}\Big]-e^{\iota \sum\limits_{j=i+1}^{\infty} \lambda a_j \varepsilon_{i-j}}\E\Big[e^{\iota \sum\limits_{j=0}^{i} \lambda a_j  \varepsilon_{i-j}}\Big]\\
&=e^{\iota \sum\limits_{j=i+1}^{\infty} \lambda a_j  \varepsilon_{i-j}}\E\Big[e^{\iota \sum\limits_{j=0}^{i-1} \lambda a_j \varepsilon_{i-j}}\Big]\Big[e^{\iota  \lambda a_i  \varepsilon_{0}}-\E\, e^{\iota  \lambda   a_i \varepsilon_{0}}\Big]\\
&:=\mbox{I} *\mbox{II}* \mbox{III},
\end{align*}
where $\mbox{I}=e^{\iota\sum\limits_{j=i+1}^{\infty}\lambda  a_j  \varepsilon_{i-j}}$, $\mbox{II}=\E\Big[e^{\iota\sum\limits_{j=0}^{i-1} \lambda a_j  \varepsilon_{i-j}}\Big]=\prod\limits_{j=0}^{i-1}\phi_{\varepsilon}(\lambda a_j)$ and 
\[
\mbox{III}=e^{\iota \lambda  a_i  \varepsilon_{0}}-\E[e^{\iota   \lambda a_i \varepsilon_{0}}]=e^{\iota   \lambda a_i \varepsilon_{0}}-\phi_{\varepsilon}(\lambda a_i).
\]

Since $\mbox{I}$ and $\mbox{III}$ are independent,
\begin{align*}
\|\mathcal{P}_{0}H(X_i)\|^2_2
&=\E\left[\mathcal{P}_{0}H(X_i) \overline{\mathcal{P}_{0}H(X_i)}\right]\\
&=[ \mbox{II}* \overline{\mbox{II}}]\E[ \mbox{I}* \bar{\mbox{I}}] \E[ \mbox{III}* \overline{\mbox{III}}]\\
&=\prod_{j=0}^{i-1}|\phi_{\varepsilon}(\lambda a_j)|^2[1-|\phi_{\varepsilon}(\lambda a_i)|^2]
\end{align*}
for $i\geq 1$ and $\|\mathcal{P}_{0}H(X_0)\|^2_2=1-|\phi_{\varepsilon}(\lambda a_0)|^2$. 
Consequently,
\begin{align*}
\sum_{i=0}^{\infty}\|\mathcal{P}_{0}H(X_i)\|_2
&=\sqrt{1-|\phi_{\varepsilon}(\lambda a_0)|^2}+\sum_{i=1}^{\infty} \prod_{j=0}^{i-1}|\phi_{\varepsilon}(\lambda a_j)| \sqrt{1-|\phi_{\varepsilon}(\lambda a_i)|^2}\\
&\le m+\sum_{i=m}^{\infty} \prod_{j=0}^{i-1}|\phi_{\varepsilon}(\lambda a_j)| \sqrt{1-|\phi_{\varepsilon}(\lambda a_i)|^2}\\
&\le m+\prod_{j=0}^{m-1}|\phi_{\varepsilon}(\lambda a_j)|\sum_{i=m}^{\infty}  \sqrt{1-|\phi_{\varepsilon}(\lambda a_i)|^2}\\
&\le m+c_{\gamma}\Big(\sum_{i=m}^{\infty}|a_i|^{\gamma}\Big) |\lambda|^{\gamma} \prod_{j=0}^{m-1}|\phi_{\varepsilon}(\lambda a_j)|.
\end{align*}
This completes the proof.
\end{proof}

\bigskip

Now we give the proof of Theorem \ref{thm1}.

\medskip

\noindent
\begin{proof}[Proof of Theorem \ref{thm1}] The proof will be done in several steps.

\medskip \noindent
\textbf{Step 1.}
We give the estimation for
\[
\Big|\E[T_n(h_n)]-\int_{\R} f^2(x)\, dx \Big|.
\] 

By Fourier inverse transform, $T_n(h_n)$ can be written as 
\begin{align}\label{FT}
T_n(h_n)
=\frac{1}{\pi n(n-1)}\sum_{1\leq i< j\leq n} \int_{\R}\widehat{K}(\lambda h_n)e^{-\iota \lambda (X_i-X_j)}d\lambda. 
\end{align}
Together with the Plancherel theorem,  we have  
\begin{align}
&\E[T_n(h_n)]-\int_{\R} f^2(x)\, dx\notag\\
&=\frac{1}{2\pi n(n-1)}\sum_{1\leq i\neq j\leq n} \int_{\R}\widehat{K}(\lambda h_n) \, \E\left[ (e^{\iota \lambda X_i}-\phi(\lambda))(e^{-\iota \lambda  X_j}-\phi(-\lambda))\right]\, d\lambda \label{cross}\\
&\qquad+ \frac{1}{2\pi}\int_{\R}\left(\widehat{K}(\lambda h_n)-\widehat{K}(0)\right)|\phi(\lambda)|^2\, d\lambda.\notag
\end{align}

Using Lemma \ref{lma1} and the inequality $|\widehat{K}(\lambda h_n)-\widehat{K}(0)|\leq c_{\beta} |\lambda h_n|^{2\beta}$ for any $\beta\in[0,1]$,
\begin{align*}
\Big| \E[T_n(h_n)]-\int_{\R} f^2(x)\, dx \Big| 
&\leq \frac{c_1}{n} \int_{\R}(1+|\lambda|^{2\gamma})| \phi_{\varepsilon}(\lambda)|^2 \, d\lambda+c_1  h^{2\gamma}_n\int_{\R}|\lambda|^{2\gamma}|\phi(\lambda)|^{2}\, d\lambda\\
&\leq c_2\left(\frac{1}{n}+h^{2\gamma}_n\right).
\end{align*}
In the case that $\{X_i\}_{i=1}^n$ are i.i.d. random variables, the term (\ref{cross}) equals zero and hence
$\Big| \E[T_n(h_n)]-\int_{\R} f^2(x)\, dx \Big|  \leq c_2h^{2\gamma}_n$. This is consistent with the result using convolution method as in Gin\'{e} and Nickl (2008).
\medskip

\medskip \noindent
\textbf{Step 2.} We give the decomposition for 
\[
T_n(h_n)-\E T_n(h_n).
\] 
Using (\ref{FT}),  we can obtain the following decomposition
\begin{align} \label{decomp}
T_n(h_n)-\E T_n(h_n)=2N_n+D_n-\E [D_n],
\end{align}
where 
\begin{align*}
N_n&=\frac{1}{2\pi n}\sum^n_{i=1}\int_{\R} \widehat{K}(\lambda h_n) \big(e^{\iota \lambda X_i}-\phi(\lambda) \big) \phi(-\lambda) \, d\lambda
\end{align*}
and
\begin{align*}
D_n&=\frac{1}{2\pi n(n-1)}\sum_{1\leq i\neq j\leq n}\int_{\R} \widehat{K}(\lambda h_n)\big(e^{\iota \lambda  X_i}-\phi(\lambda) \big)\big(e^{-\iota \lambda  X_j}-\phi(-\lambda) \big) \, d\lambda.
\end{align*}

\medskip \noindent
\textbf{Step 3.} Here we estimate $\E|D_n|^2$.  

We first assume that $\{X_n\}$ is $m$-dependent, that is, $a_{0}\neq 0$, $a_{m}\neq 0$ and $a_i=0$ for all $i>m$. In this case,
\begin{align} \label{d0}
\E |D_n|^2
&\leq \frac{1}{n^2(n-1)^2}\E\Big|\sum_{j-i>4m}\int_{\R} \widehat{K}(\lambda h_n)\big(e^{\iota \lambda  X_i}-\phi(\lambda)\big)\big(e^{-\iota \lambda  X_j}-\phi(-\lambda)\big) \, d\lambda\Big|^2 \nonumber \\  
&\qquad+\frac{1}{n^2(n-1)^2}\E \Big|\sum_{0<j-i\leq 4m}\int_{\R} \widehat{K}(\lambda h_n)\big(e^{\iota \lambda X_i}- \phi(\lambda)\big)\big(e^{-\iota \lambda  X_j}-\phi(-\lambda)\big) \, d\lambda\Big|^2.  
\end{align}

For the first expectation on the right hand side of (\ref{d0}),
\begin{align} \label{d1}
&\E \Big|\sum_{j-i>4m}\int_{\R} \widehat{K}(\lambda h_n)\big(e^{\iota \lambda  X_i}-\phi(\lambda)\big)\big(e^{-\iota \lambda X_j}-\phi(-\lambda)\big) \, d\lambda\Big|^2 \nonumber \\
&\leq \int_{\R^2} |\widehat{K}(\lambda_1 h_n)| |\widehat{K}(\lambda_2 h_n)|  \sum_{|i_1-i_2|\leq m} \Big|\E \big[(e^{\iota \lambda_1  X_{i_1}}-\phi(\lambda_1))(e^{-\iota \lambda_2  X_{i_2}}-\phi(-\lambda_2))\big] \Big| \nonumber \\
&\qquad\times  \sum_{|j_1-j_2|\leq m} \Big|\E \big[(e^{-\iota \lambda_1  X_{j_1}}-\phi(-\lambda_1))(e^{\iota \lambda_2  X_{j_2}}-\phi(\lambda_2))\big]\Big|\, d\lambda_1\, d\lambda_2 \nonumber \\
&\leq\int_{\R^2} |\widehat{K}(\lambda_1 h_n)||\widehat{K}(\lambda_2 h_n)|\, \Big[\sum_{|i_1-i_2|\leq m}\big[|\E e^{\iota \lambda_1  X_{i_1}-\iota \lambda_2 X_{i_2}}|+|\phi(\lambda_1)\phi(-\lambda_2)|\big]\Big]^2\, d\lambda_1\, d\lambda_2 \nonumber \\
&\leq c_3\, n^2\int_{\R^2} |\widehat{K}(\lambda_1h_n)||\widehat{K}(\lambda_2 h_n)|\Big[ |\phi(\lambda_1-\lambda_2)|^2+|\phi_{\varepsilon}(\lambda_1 a_0)\phi_{\varepsilon}(\lambda_2 a_m)|^2+|\phi(\lambda_1)\phi(\lambda_2)|^2\Big]\, d\lambda_1d\lambda_2 \nonumber \\
&\leq c_4\, n^2\int_{\R^2} |\widehat{K}(\lambda_1h_n)|^2 |\phi(\lambda_1-\lambda_2)|^2\, d\lambda_1d\lambda_2+c_4\, n^2\int_{\R^2}  |\widehat{K}(\lambda_2 h_n)|^2 |\phi(\lambda_1-\lambda_2)|^2\, d\lambda_1d\lambda_2 \nonumber \\ 
&\qquad+c_4\, n^2\int_{\R^2}  |\phi_{\varepsilon}(\lambda_1 a_0)\phi_{\varepsilon}(\lambda_2 a_m)|^2\, d\lambda_1d\lambda_2+c_4\, n^2\int_{\R^2}  |\phi(\lambda_1)\phi(\lambda_2)|^2\, d\lambda_1d\lambda_2 \nonumber \\
&\leq c_5\, \frac{n^2}{h_n},
\end{align}
where in the last inequality we used the Plancherel theorem.

Set $K_n(x)=\frac{1}{h_n}K(\frac{x}{h_n})$. Then
\begin{align*}
&\E \Big|\sum_{0<j-i\leq 4m}\int_{\R} \widehat{K}(\lambda h_n)\big(e^{\iota \lambda  X_i}- \phi(\lambda)\big)\big(e^{-\iota \lambda  X_j}-\phi(-\lambda)\big) \, d\lambda \Big|^2 \nonumber \\
&\leq c_6\, n^2\, \sum^{4m}_{k=1} \E \Big|\int_{\R} \widehat{K}(\lambda h_n)\big(e^{\iota \lambda  X_1}- \phi(\lambda)\big)\big(e^{-\iota \lambda  X_{1+k}}-\phi(-\lambda)\big) \, d\lambda\Big|^2 \nonumber \\
&\leq c_7\, n^2\, \sum^{4m}_{k=1}\E K^2_n(X_1-X_{1+k})+ c_7\,n^2\,\E\Big|\int_{\R} \widehat{K}(\lambda h_n)\, e^{\iota \lambda  X_1}\, \phi(-\lambda)\, d\lambda\Big|^2 \nonumber \\
&\qquad+ c_7\, n^2\,\sum^{4m}_{k=1}\E\Big|\int_{\R} \widehat{K}(\lambda h_n)\, e^{-\iota \lambda  X_{1+k}}\phi(\lambda) \, d\lambda \Big|^2+c_7\,n^2\,\Big| \int_{\R}\widehat{K}(\lambda h_n)|\phi(\lambda)|^2 d\lambda \Big|^2.
\end{align*}
Hence
\begin{align}\label{d2}
&\E \Big|\sum_{0<j-i\leq 4m}\int_{\R} \widehat{K}(\lambda h_n)\big(e^{\iota \lambda  X_i}- \phi(\lambda)\big)\big(e^{-\iota \lambda  X_j}-\phi(-\lambda)\big) \, d\lambda \Big|^2 \nonumber \\
&\leq c_8\, n^2\int_{\R} |\widehat{K^2_n}(\lambda)| |\phi_{\varepsilon}(\lambda a_0)| |\phi_{\varepsilon}(\lambda a_m)|\, d\lambda+ c_8\, n^2 \int_{\R} |\widehat{K}(\lambda h_n)|^2\, d\lambda \int_{\R}|\phi(\lambda)|^2 d\lambda \nonumber\\
&\qquad\qquad+c_8\, n^2 \Big(\int_{\R} |\phi(\lambda)|^2\, d\lambda\Big)^2  \nonumber \\
&\leq c_9\, \frac{n^2}{h_n},
\end{align}
where in the last inequality we used the fact that $|\widehat{K^2_n}(\lambda)|\leq \frac{1}{h_n}\int_{\R} K^2(x)\, dx$ for all $\lambda\in\R$.

Combining (\ref{d0}), (\ref{d1}) and (\ref{d2}) gives
\begin{align*}
\E|D_n|^2 \leq \frac{c_{10}}{n^2 h_n}. 
\end{align*}

In the sequel, we can assume that $a_0\ne 0$ and $a_i\neq 0$ for infinite many $i\in \N$. Let $p_1=\min\{i\in\N: a_i\neq 0\}$ and $p_2=\min\{i\in \N: a_i\neq 0, i>p_1\}$. Note that
\begin{align}  \label{L1}
&\|D_n\|_2\leq I_0+I_1+\cdots +I_{p_1}+I_{p_1+1},
\end{align}
where 
\begin{align*}
I_p=\frac{1}{2\pi n(n-1)} \Big\|\int_{\R} \widehat{K}(\lambda h_n)\,  \sum_{1\leq i\neq j\leq n} \sum_{\substack{ k\leq i,\, \ell\leq j\\ |k-i|+|\ell-j|= p}}\big[\mathcal{P}_{k}H(X_i)(\lambda)\mathcal{P}_{\ell}H(X_j)(-\lambda)\big] \, d\lambda\Big\|_2,
\end{align*}
for $0\le p\le p_1$ and 
\begin{align*}
I_{p_1+1}&=\frac{1}{2\pi n(n-1)}  \Big\| \int_{\R} \widehat{K}(\lambda h_n)\, \sum_{1\leq i\neq j\leq n} \sum_{\substack{ k\leq i,\, \ell\leq j\\ |k-i|+|\ell-j|\geq p_1+1}}\big[\mathcal{P}_{k}H(X_i)(\lambda)\mathcal{P}_{\ell}H(X_j)(-\lambda)\big]\, d\lambda \Big\|_2.
\end{align*}

We first estimate $I_0$. Note that 
\begin{align*}
I_{0}=\frac{1}{2\pi n(n-1)} \Big\|\int_{\R} \widehat{K}(\lambda h_n)\,  \sum_{1\leq i\neq j\leq n} \big[\mathcal{P}_{i}H(X_i)(\lambda)\mathcal{P}_{j}H(X_j)(-\lambda)\big] \, d\lambda\Big\|_2,
\end{align*}
and 
\begin{align*}
I^2_0
&\leq \frac{c_{11}}{n^4} \Big\|\int_{\R} \widehat{K}(\lambda h_n)\,  \sum_{j-i=1} \big[\mathcal{P}_{i}H(X_i)(\lambda)\mathcal{P}_{j}H(X_j)(-\lambda)\big] \, d\lambda\Big\|^2_2\\
&\qquad+\frac{c_{11}}{n^4} \Big\|\int_{\R} \widehat{K}(\lambda h_n)\,  \sum_{j-i=2} \big[\mathcal{P}_{i}H(X_i)(\lambda)\mathcal{P}_{j}H(X_j)(-\lambda)\big] \, d\lambda\Big\|^2_2\\
&\qquad+\cdots+ \frac{c_{11}}{n^4} \Big\|\int_{\R} \widehat{K}(\lambda h_n)\,  \sum_{j-i=p_2} \big[\mathcal{P}_{i}H(X_i)(\lambda)\mathcal{P}_{j}H(X_j)(-\lambda)\big] \, d\lambda\Big\|^2_2\\
&\qquad+\frac{c_{11}}{n^4} \Big\|\int_{\R} \widehat{K}(\lambda h_n)\,  \sum_{j-i\geq p_2+1} \big[\mathcal{P}_{i}H(X_i)(\lambda)\mathcal{P}_{j}H(X_j)(-\lambda)\big] \, d\lambda\Big\|^2_2\\
&:=I_{0,1}+I_{0,2}+\cdots+ I_{0, p_2}+ I_{0, p_2+1}.
\end{align*}

Recall properties of the projection operator $\mathcal{P}_k$ defined in (\ref{proj}). It is easy to see that, for $1\le q\le p_2$, 
\begin{align*}
I_{0,q} &\leq  \frac{c_{12}}{n^3} \int_{\R^2} |\widehat{K}(\lambda_1 h_n)||\widehat{K}(\lambda_2 h_n)|\, \Big|\prod^{\infty}_{k=1} \E [e^{\iota(\lambda_2-\lambda_1)  (a_{k+q}-a_k) \varepsilon_{j-k-q}}] \Big|\, d\lambda_1\, d\lambda_2\\
&= \frac{c_{12}}{n^3} \int_{\R^2} |\widehat{K}(\lambda_1 h_n)||\widehat{K}(\lambda_2 h_n)|\prod^{\infty}_{k=1}|\phi_{\varepsilon}((\lambda_2-\lambda_1)(a_{k+q}-a_k))|\, d\lambda_1\, d\lambda_2.
\end{align*}
Since $\lim\limits_{i\to\infty} a_i=0$ and $a_i\neq 0$ for infinite many $i\in\N$, there exist infinite many indices $k$ such that $a_{k+q}\neq a_k$. Doing some simple calculations then gives
\begin{align*}
I_{0,q}
&\leq  \frac{c_{13}}{n^3h_n}, 
\end{align*}
for $1\le q\le p_2$. To estimate the last term in the upper bound of $I_0^2$, observe that
\begin{align*}
I_{0,p_2+1}
&=\frac{c_{11}}{n^4}\int_{\R^2} \widehat{K}(\lambda_1 h_n)\widehat{K}(-\lambda_2 h_n)\\
&\qquad \sum_{\substack{ j-i_1\geq p_2+1,\; j-i_2\geq p_2+1 }} \E \left[\mathcal{P}_{i_1} H(X_{i_1})(\lambda_1)\mathcal{P}_{i_2} H(X_{i_2})(-\lambda_2)\mathcal{P}_j H(X_j)(-\lambda_1)\mathcal{P}_j H(X_j)(\lambda_2)\right]\, d\lambda_1\, d\lambda_2\\
&=\frac{c_{11}}{n^4}\int_{\R^2} \widehat{K}(\lambda_1 h_n)\widehat{K}(-\lambda_2 h_n) I_1(\lambda_1,\lambda_2)\, d\lambda_1\, d\lambda_2+ \frac{c_{11}}{n^4}\int_{\R^2} \widehat{K}(\lambda_1 h_n)\widehat{K}(\lambda_2 h_n) I_2(\lambda_1,\lambda_2)\, d\lambda_1\, d\lambda_2\\
&\qquad\qquad\qquad\qquad+ \frac{c_{11}}{n^4}\int_{\R^2} \widehat{K}(\lambda_1 h_n)\widehat{K}(-\lambda_2 h_n) I_3(\lambda_1,\lambda_2)\, d\lambda_1\, d\lambda_2,
\end{align*}
where 
\begin{align*}
I_1(\lambda_1,\lambda_2)&=\sum_{\substack{j-i_1\geq p_2+1,\; j-i_2\geq p_2+1,\; i_2>i_1}} \E \left[\mathcal{P}_{i_1} H(X_{i_1})(\lambda_1)\mathcal{P}_{i_2} H(X_{i_2})(-\lambda_2)\mathcal{P}_j H(X_j)(-\lambda_1)\mathcal{P}_j H(X_j)(\lambda_2)\right],  \\
I_2(\lambda_1,\lambda_2)&=\sum_{\substack{j-i_1\geq p_2+1,\; j-i_2\geq p_2+1,\; i_1<i_2}} \E \left[\mathcal{P}_{i_1} H(X_{i_1})(\lambda_1)\mathcal{P}_{i_2} H(X_{i_2})(-\lambda_2)\mathcal{P}_j H(X_j)(-\lambda_1)\mathcal{P}_j H(X_j)(\lambda_2)\right],   \\
I_3(\lambda_1,\lambda_2)&=\sum_{\substack{ j-i_1\geq p_2+1,\; j-i_2\geq p_2+1,\; i_2=i_1}} \E \left[\mathcal{P}_{i_1} H(X_{i_1})(\lambda_1)\mathcal{P}_{i_2} H(X_{i_2})(-\lambda_2)\mathcal{P}_j H(X_j)(-\lambda_1)\mathcal{P}_j H(X_j)(\lambda_2)\right].
\end{align*}

Note that 
\begin{align*}
\left|I_1(\lambda_1,\lambda_2)\right|
&\leq \sum_{\substack{ j-i_1\geq p_2+1,\; j-i_2\geq p_2+1,\; i_2>i_1}}  \left|\phi_{\varepsilon}(a_{p_1}(\lambda_1-\lambda_2))\right|\left|\phi_{\varepsilon}(a_{p_2}(\lambda_1-\lambda_2))\right|\\
&\qquad\qquad\times \left| \E \left[ (e^{-\iota \lambda_2 a_0 \varepsilon_{i_2}}-\phi_{\varepsilon}(- \lambda_2 a_0)) (e^{-\iota  (\lambda_1-\lambda_2) a_{j-i_2} \varepsilon_{i_2}} -\phi_{\varepsilon}(-(\lambda_1-\lambda_2) a_{j-i_2}) \right] \right|\\
&\leq  \sum_{\substack{ j-i_1\geq p_2+1,\; j-i_2\geq p_2+1,\; i_2>i_1}} \left|\phi_{\varepsilon}((\lambda_1-\lambda_2) a_{p_1})\right|\left|\phi_{\varepsilon}((\lambda_1-\lambda_2) a_{p_2})\right|\\
&\qquad\qquad\times\left\| e^{-\iota  (\lambda_1-\lambda_2) a_{j-i_2} \varepsilon_{i_2}} -\phi_{\varepsilon}(-(\lambda_1-\lambda_2) a_{j-i_2})\right\|_2 \\
&\leq c_{16}\, \sum_{\substack{j-i_1\geq p_2+1,\; j-i_2\geq p_2+1,\; i_2>i_1}}  \left|\phi_{\varepsilon}((\lambda_1-\lambda_2)a_{p_1} )\right|\left|\phi_{\varepsilon}((\lambda_1-\lambda_2) a_{p_2})\right|\, |\lambda_1-\lambda_2|^{\gamma} |a_{j-i_2}|^{\gamma}\\
&\leq c_{17}\, n^2\, |\lambda_1-\lambda_2|^{\gamma}  \left|\phi_{\varepsilon}((\lambda_1-\lambda_2)a_{p_1})\right|\left|\phi_{\varepsilon}((\lambda_1-\lambda_2)a_{p_2})\right|.
\end{align*}

Similarly,
\begin{align*}
\left|I_2(\lambda_1,\lambda_2)\right|
&\leq c_{18}\, n^2\, |\lambda_1-\lambda_2|^{\gamma}  |\phi_{\varepsilon}((\lambda_1-\lambda_2)a_{p_1})| |\phi_{\varepsilon} ((\lambda_1-\lambda_2)a_{p_2}) |.
\end{align*}
Finally,
\begin{align*}
\left| I_3(\lambda_1,\lambda_2)\right|
&\leq \sum_{j-i\geq p_2+1} \Big| \E \left[\mathcal{P}_{i} H(X_{i})(\lambda_1)\mathcal{P}_{i} H(X_{i})(-\lambda_2)\mathcal{P}_j H(X_j)(-\lambda_1)\mathcal{P}_j H(X_j)(\lambda_2)\right] \Big|\\
&\leq c_{19}\, n^2 |\phi_{\varepsilon}((\lambda_1-\lambda_2)a_{p_1})||\phi_{\varepsilon} ((\lambda_1-\lambda_2)a_{p_2})|.
\end{align*}
So
\begin{align*}
I_{0,p_2+1}
&\leq \frac{c_{20}}{n^2} \int_{\R^2} |\widehat{K}(\lambda_1 h_n)| |\widehat{K}(\lambda_2 h_n)|(1+|\lambda_1-\lambda_2|^{\gamma})  |\phi_{\varepsilon}((\lambda_1-\lambda_2) a_{p_1})| |\phi_{\varepsilon} ((\lambda_1-\lambda_2) a_{p_2})| \, d\lambda_1\, d\lambda_2\\
&\leq \frac{c_{21}}{n^2h_n}.
\end{align*}
Therefore, 
\begin{align} \label{I1}
I^2_0\leq I_{0,1}+I_{0,2}+\cdots +I_{0, p_2}+ I_{0,p_2+1}\leq \frac{c_{22}}{n^2 h_n}.
\end{align}

Using similar arguments, for $1\le p\le p_1$, we can obtain 
\begin{align} \label{I2}
I^2_p\leq \frac{c_{23}}{ n^2 h_n}.
\end{align}

We now estimate $I_{p_1+1}$.  Note that 
\begin{align*}
I^2_{p_1+1}=\frac{1}{(2\pi)^2 n^2(n-1)^2} \int_{\R^2} \widehat{K}(\lambda_1 h_n)\widehat{K}(-\lambda_2 h_n)\, J_n(\lambda_1,\lambda_2)\, d\lambda_1\, d\lambda_2,
\end{align*}
where 
\begin{align*}
J_n(\lambda_1,\lambda_2)
&=\sum_{\substack{ i_1\neq j_1,\; i_2\neq j_2\\ k_1\leq i_1,\; k_2\leq j_1,\; k_3\leq i_2,\; k_4\leq j_2\\ |k_1-i_1|+|k_2-j_1|\geq p_1+1\\ |k_3-i_2|+|k_4-j_2|\geq p_1+1}}\E \Big[\mathcal{P}_{k_1} H(X_{i_1})(\lambda_1) \mathcal{P}_{k_2} H(X_{j_1})(-\lambda_1) \mathcal{P}_{k_3} H(X_{i_2})(-\lambda_2)\mathcal{P}_{k_4} H(X_{j_2})(\lambda_2)\Big].
\end{align*}
Recall properties of the projection operator $\mathcal{P}_k$ defined in (\ref{proj}). By Cauchy-Schartz inequality and the condition (\ref{char4moment}), we can obtain
\begin{align}\label{cova}
|J_n(\lambda_1,\lambda_2)|
&\leq c_{24}\; |\lambda_1|^{2\gamma}|\lambda_2|^{2\gamma} \Big(|\phi_{\varepsilon}(\lambda_1 a_0)|^2+|\phi_{\varepsilon}(\lambda_1 a_{p_1})|^2\Big) \, \Big(|\phi_{\varepsilon}(\lambda_2 a_0)|^2+|\phi_{\varepsilon}(\lambda_2 a_{p_1})|^2\Big) \nonumber \\  
&\qquad \times \sum_{\substack{i_1\neq j_1,\; i_2\neq j_2\\ k_1\leq i_1,\; k_2\leq  j_1,\; k_3\leq i_2\; k_4\leq j_2\\
|k_1-i_1|+|k_2-j_1|\geq p_1+1,\, |k_3-i_2|+|k_4-j_2|\geq p_1+1\\
\text{at least two}\, k\, \text{indices are identical,}\\
\text{identical}\, k\, \text{indices are greater than the others}}} |a_{i_1-k_1}|^{\gamma}|a_{j_1-k_2}|^{\gamma}|a_{i_2-k_3}|^{\gamma}|a_{j_2-k_4}|^{\gamma}.
\end{align}

There are only four possibilities for the $k$ indices in the above summation:  1) all the $k$ indices are identical; 2) three $k$ indices are identical and strictly larger than the last one; 3) two pairs of different identical $k$ indices; 4) one pair of identical $k$ indices and strictly larger than the remaining two different indices. For the first three possibilities, the summation in (\ref{cova}) is less than a constant multiple of $n^2$ since $\sum\limits^{\infty}_{i=0}|a_i|^{\gamma}<\infty$. For the last possibility, the summation in (\ref{cova}) is less than a constant multiple of $n^2\eta_{n,\gamma}$ where $\eta_{n,\gamma}=\sum\limits^{n}_{\ell=0}\sum\limits^{\infty}_{i=\ell} |a_i|^{\gamma}$. Therefore,
\begin{align}  \label{I3}
I^2_{p_1+1}
&\leq \frac{c_{25}\, \eta_{n,\gamma}}{n^2} \left( \int_{\R} |\widehat{K}(\lambda h_n)|\, |\lambda|^{2\gamma}\, \Big(|\phi_{\varepsilon}(\lambda a_0)|^2+|\phi_{\varepsilon}(\lambda a_{p_1})|^2\Big)\, d\lambda\right)^2.
\end{align}

Putting inequalities (\ref{L1}), (\ref{I1}), (\ref{I2}) and (\ref{I3}) together gives 
\begin{align*}
\|D_n\|^2_2\leq c_{26} \left(\frac{1}{n^2 h_n}+\frac{\eta_{n,\gamma}}{n^2} \right).
\end{align*}

\medskip \noindent
\textbf{Step 4.} Here we estimate 
\[
\E\big[|N_n-\overline{N}_n|^2\big],
\]
where 
\[
\overline{N}_n=\frac{1}{2\pi}\int_{\R} \widehat{K}(0) \big(\phi_n(\lambda)-\phi(\lambda) \big) \phi(-\lambda) \,  d\lambda.
\]

Note that 
\begin{align*} 
\E\left[|N_n-\overline{N}_n|^2\right]
&=\E\left[ \Big|\frac{1}{2\pi}\int_{\R} \big(\widehat{K}(\lambda h_n)-\widehat{K}(0)\big)\big(\phi_n(\lambda)-\phi(\lambda) \big) \phi(-\lambda) \,  d\lambda\Big|^2\right]\\
&\leq \frac{1}{(2\pi)^2n} \, \E\left[ \Big|\int_{\R} \big[\widehat{K}(\lambda h_n)-\widehat{K}(0)\big]\, e^{\iota\lambda  X_1}\phi(-\lambda) \,  d\lambda\Big|^2\right]\\
&= \frac{1}{n} \, \E\Big[K_n* f(X_1)-f(X_1)\Big]^2,
\end{align*}
where $K_n(x)=\frac{1}{h_n}K(\frac{x}{h_n})$.

Since $f$ is bounded,
\begin{align*} 
\E\left[|N_n-\overline{N}_n|^2\right]
&\leq \frac{ c_{27}}{n} \int_{\R} \big[ K_n* f(x)-f(x) \big]^2\, dx \\
&= \frac{c_{27}}{(2\pi)^2 n} \int_{\R}|\widehat{K}(\lambda h_n)-\widehat{K}(0)|^2|\phi(\lambda)|^2\, d\lambda \\
&\leq \frac{c_{27}}{(2\pi)^2 n} \int_{\R}|\widehat{K}(\lambda h_n)-\widehat{K}(0)|^2|\phi_{\varepsilon}(a_0\lambda)|^2\, d\lambda \\
&\leq \frac{c_{28}\, h^{2\gamma}_n}{n},
\end{align*}
where in the first equality we used the Plancherel theorem.

It is easy to see that 
\[
\overline{N}_n
=\frac{1}{n} \sum\limits^n_{i=1}\left(f(X_i)-\int_{\R} f^2(x)\, dx\right).
\] 
Combining all the above results gives 
\[
\E\Big|T_n(h_n)-\E T_n(h_n)-\frac{1}{n}\sum^n_{i=1}Y_i\Big|^2\leq c_{29}\left(\frac{1}{n^2 h_n}+\frac{\eta_{n,\gamma}}{n^2}+\frac{h^{2\gamma}_n}{n}\right).
\]

\medskip \noindent
\textbf{Step 5.}  We show the central limit theorem for
\[
\frac{1}{\sqrt{n}} \sum^n_{i=1}\Big(f(X_i)-\int_{\R} f^2(x)\, dx\Big).
\]
It suffices to consider the case when the $X_i$ are dependent. Note that 
\begin{align*}
&\mathcal{P}_{0}\big[f(X_i)-\int_{\R} f^2(x)\, dx\big]\\
&=\E [f(X_i)|\mathcal{F}_0]-\E [f(X_i)|\mathcal{F}_{-1}] \\
&=\frac{1}{2\pi}\, \E\bigg[\int_{\R} \phi(\lambda) \, e^{\iota\sum\limits_{j=i+1}^{\infty} \lambda a_j \varepsilon_{i-j}+\iota\sum\limits_{j=0}^{i-1}  \lambda a_j \varepsilon_{i-j}}\left[e^{\iota  \lambda a_i \varepsilon_0}-e^{\iota \lambda a_i  \varepsilon'_0}\right]\, d\lambda \, \bigg|\mathcal{F}_0\bigg], 
\end{align*}
where $\{\varepsilon'_i: i\in\Z\}$ is an independent copy of $\{\varepsilon_i: i\in\Z\}$.

Then
\begin{align*}
\Big\|\mathcal{P}_{0}\big[f(X_i)-\int_{\R} f^2(x)\, dx\big]\Big\|_2
&\leq \Big\|\int_{\R} \phi(\lambda) \, e^{\iota \sum\limits_{j=i+1}^{\infty} \lambda a_j \varepsilon_{i-j}+\iota \sum\limits_{j=0}^{i-1} \lambda a_j \varepsilon_{i-j}}\big[e^{\iota  \lambda a_i \varepsilon_0}-e^{\iota \lambda a_i \varepsilon'_0}\big]\, d\lambda\Big\|_2\\
&\leq \Big\|\int_{\R} |\phi(\lambda)| |e^{\iota  \lambda a_i \varepsilon_0}-e^{\iota \lambda a_i \varepsilon'_0}|\, d\lambda\Big\|_2\\
&\leq 2\Big\|\int_{\R} |\phi(\lambda)| |e^{\iota  \lambda a_i \varepsilon_0}-\phi_\varepsilon(\lambda a_i)|\, d\lambda\Big\|_2\\
&\leq 2|a_i|^{\gamma} \int_{\R} |\lambda|^{\gamma} |\phi(\lambda)|\, d\lambda\\
&\leq c_{30} \, |a_i|^{\gamma}\int_{\R} |\lambda|^{\gamma} |\phi_{\varepsilon}(\lambda)|^2\, d\lambda.
\end{align*}

Now by Lemma 1 in Wu (2006), we can easily obtain
\[
\frac{1}{\sqrt{n}} \sum^n_{i=1}\Big(f(X_i)-\int_{\R} f^2(x)\, dx\Big)\overset{d}{\longrightarrow} N(0,\sigma^2)
\]
for some $\sigma^2\in(0,\infty)$. Using similar arguments as in \textbf{Step 3}, we can show that 
\[
\sup_{n}\frac{1}{n^2}\E|S_n|^4<\infty,
\] 
where $S_n=\sum\limits^n_{i=1}\left(f(X_i)-\int_{\R} f^2(x)\, dx\right)$.

Hence, $\sigma^2=\lim\limits_{n\to\infty} n^{-1}\Var(S_n)$. This completes the proof.
\end{proof}

\bigskip

\noindent
\begin{proof}[Proof of Theorem \ref{thm2}] The proof of this theorem is similar to that of Theorem \ref{thm1}. We only need to modify the \textbf{Step 3} in the proof of Theorem \ref{thm1} and consider the dependent case. Using the same notations as in the proof of Theorem \ref{thm1} and then Lemma \ref{lma2}, 
\begin{align*} 
\sqrt{n}\, \E |D_n|
&\leq \sqrt{n}\, \int_{\R} |\widehat{K}(\lambda h_n)| \E\left[|\phi_n(\lambda)-\phi(\lambda)|^2\right] \, d\lambda \\
&\qquad\qquad +\frac{1}{\sqrt{n}}\int_{\R} |\widehat{K}(\lambda h_n)|\E\left[|e^{\iota \lambda  X_1}-\E e^{\iota \lambda  X_1}|^2\right] \, d\lambda \\
&\leq \frac{c_1}{\sqrt{n}h_n}\int_{\R} |\widehat{K}(\lambda )|\, d\lambda+\frac{c_1}{\sqrt{n}}\int_{\R} |\lambda|^{2\gamma} |\phi_{\varepsilon}(\lambda a_0)|^2\, d\lambda.
\end{align*}
This completes the proof.
\end{proof}

\bigskip

At the end, we give two useful lemmas which are related to the sufficient and necessary conditions for the assumptions in Theorems \ref{thm1} and \ref{thm2}.

\begin{lemma} \label{lma} Let $\phi_{\varepsilon}(\lambda)=\E[e^{\iota \lambda \varepsilon_1}]$ for all $\lambda\in\R$.  For any $\gamma\in[0,1]$, if $\E|\varepsilon_1|^{2\gamma}<\infty$, then there exists a positive constant $c_{\gamma,2}$ such that 
\[
\E |e^{\iota \lambda \varepsilon_1}-\phi_{\varepsilon}(\lambda)|^2\leq c_{\gamma,2}\, |\lambda|^{2\gamma}.
\]
Moreover, if $\E|\varepsilon_1|^{4\gamma}<\infty$, there exists a positive constant $c_{\gamma,4}$ such that
\[
\E |e^{\iota \lambda  \varepsilon_1}-\phi_{\varepsilon}(\lambda)|^4\leq c_{\gamma,4}\, |\lambda|^{4\gamma}.
\]
\end{lemma}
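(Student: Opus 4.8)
The plan is to reduce both inequalities to one elementary pointwise estimate on $|e^{\iota\theta}-1|$ together with a centering step. First I would set $U=U(\lambda):=e^{\iota\lambda\varepsilon_1}-1$, so that $\E U=\phi_{\varepsilon}(\lambda)-1$ and hence $e^{\iota\lambda\varepsilon_1}-\phi_{\varepsilon}(\lambda)=U-\E U$. The key bound is that for every real $\theta$ and every $\gamma\in[0,1]$,
\[
|e^{\iota\theta}-1|=2|\sin(\theta/2)|\le\min(|\theta|,2)\le 2^{1-\gamma}|\theta|^{\gamma},
\]
where the last step interpolates between the trivial bound $|e^{\iota\theta}-1|\le 2$ and the Lipschitz bound $|e^{\iota\theta}-1|\le|\theta|$ (split into the cases $|\theta|\le 2$ and $|\theta|>2$). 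Taking $\theta=\lambda\varepsilon_1$ gives $|U|\le 2^{1-\gamma}|\lambda|^{\gamma}|\varepsilon_1|^{\gamma}$ pointwise, whence $\E|U|^{2}\le 2^{2(1-\gamma)}|\lambda|^{2\gamma}\E|\varepsilon_1|^{2\gamma}$ and $\E|U|^{4}\le 2^{4(1-\gamma)}|\lambda|^{4\gamma}\E|\varepsilon_1|^{4\gamma}$, both finite under the respective moment hypotheses.

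For the second-moment claim I would use $\E|U-\E U|^{2}=\E|U|^{2}-|\E U|^{2}\le\E|U|^{2}$, which immediately yields $\E|e^{\iota\lambda\varepsilon_1}-\phi_{\varepsilon}(\lambda)|^{2}\le 2^{2(1-\gamma)}|\lambda|^{2\gamma}\E|\varepsilon_1|^{2\gamma}$, so one may take $c_{\gamma,2}=2^{2(1-\gamma)}\E|\varepsilon_1|^{2\gamma}$. For the fourth-moment claim I would use the elementary inequality $|a-b|^{4}\le 8(|a|^{4}+|b|^{4})$ together with Jensen's inequality $|\E U|^{4}\le(\E|U|)^{4}\le\E|U|^{4}$ to obtain $\E|U-\E U|^{4}\le 8\E|U|^{4}+8|\E U|^{4}\le 16\,\E|U|^{4}$; combined with the bound on $\E|U|^{4}$ this gives $\E|e^{\iota\lambda\varepsilon_1}-\phi_{\varepsilon}(\lambda)|^{4}\le 2^{4(1-\gamma)+4}|\lambda|^{4\gamma}\E|\varepsilon_1|^{4\gamma}$, so $c_{\gamma,4}=2^{4(1-\gamma)+4}\E|\varepsilon_1|^{4\gamma}$ works.

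There is no serious obstacle in this lemma; the only points demanding a little care are the interpolation step $\min(|\theta|,2)\le 2^{1-\gamma}|\theta|^{\gamma}$ and making sure the exponents $2\gamma$ and $4\gamma$ that appear after raising $|U|$ to the relevant power match exactly the assumptions $\E|\varepsilon_1|^{2\gamma}<\infty$ and $\E|\varepsilon_1|^{4\gamma}<\infty$. An alternative route for the second-moment inequality is the identity $\E|e^{\iota\lambda\varepsilon_1}-\phi_{\varepsilon}(\lambda)|^{2}=1-|\phi_{\varepsilon}(\lambda)|^{2}=\tfrac12\,\E|e^{\iota\lambda(\varepsilon_1-\varepsilon_1')}-1|^{2}$ with $\varepsilon_1'$ an independent copy of $\varepsilon_1$, but the centering argument above handles both orders uniformly and avoids introducing the symmetrized variable.
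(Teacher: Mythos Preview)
Your argument is correct, and it is genuinely different from the paper's. The paper proves the second-moment bound via the identity $\E|e^{\iota\lambda\varepsilon_1}-\phi_\varepsilon(\lambda)|^2=1-|\phi_\varepsilon(\lambda)|^2$ and then bounds $1-|\phi_\varepsilon(\lambda)|^2\le 1-[\E\cos(\lambda\varepsilon_1)]^2$ by a $\sin^2$-estimate; for the fourth moment it carries out a rather long explicit expansion of $\E|e^{\iota\lambda\varepsilon_1}-\phi_\varepsilon(\lambda)|^4$ in terms of $\E\cos(k\lambda\varepsilon_1)$ and $\E\sin(k\lambda\varepsilon_1)$ and then collects terms. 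Your centering approach ($U=e^{\iota\lambda\varepsilon_1}-1$, $e^{\iota\lambda\varepsilon_1}-\phi_\varepsilon(\lambda)=U-\E U$) combined with the single interpolation bound $|e^{\iota\theta}-1|\le 2^{1-\gamma}|\theta|^\gamma$ handles both orders at once and avoids the trigonometric bookkeeping entirely; it also makes the dependence of the constants on $\E|\varepsilon_1|^{2\gamma}$ and $\E|\varepsilon_1|^{4\gamma}$ transparent, and would extend verbatim to any even power $2k$ under $\E|\varepsilon_1|^{2k\gamma}<\infty$. The paper's route, by contrast, keeps the variance identity $1-|\phi_\varepsilon(\lambda)|^2$ visible, which ties in more directly with Definition~\ref{rmk5} and Lemma~\ref{optimal}, but at the cost of a case-by-case computation.
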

\begin{proof} This first inequality follows from 
\[
\E  |e^{\iota \lambda \varepsilon_1}-\phi_{\varepsilon}(\lambda)|^2
=1-|\phi_{\varepsilon}(\lambda)|^2
\leq 1-[\E\cos(\lambda \varepsilon_1)]^2\leq c_1|\lambda|^{2\gamma}.
\]
For the second inequality,
\begin{align*}
&\E |e^{\iota \lambda \varepsilon_1}-\phi_{\varepsilon}(\lambda)|^4\\
&=\E \big[ 1-e^{\iota \lambda  \varepsilon_1} \phi_{\varepsilon}(-\lambda)-e^{-\iota \lambda  \varepsilon_1} \phi_{\varepsilon}(\lambda)+|\phi_{\varepsilon}(\lambda)|^2\big]^2\\
&=1+\phi_{\varepsilon}(2\lambda)\phi^2_{\varepsilon}(-\lambda)+\phi_{\varepsilon}(-2\lambda)\phi^2_{\varepsilon}(\lambda)-3|\phi_{\varepsilon}(\lambda)|^4\\
&=1+ 2 \E\cos(2\lambda \varepsilon_1) \big[\E^2\cos(\lambda \varepsilon_1)-\E^2\sin(\lambda  \varepsilon_1) \big]+4\E\sin(2\lambda  \varepsilon_1)\E\cos(\lambda  \varepsilon_1)\E\sin(\lambda  \varepsilon_1)\\
&\qquad -3\big[\E^2\cos(\lambda \varepsilon_1)+\E^2\sin(\lambda \varepsilon_1)\big]^2\\
&=1+ 2 \big[1-2\E\sin^2(\lambda \varepsilon_1)\big] \big[(1-2\E\sin^2(\frac{\lambda \varepsilon_1}{2}))^2-\E^2\sin(\lambda \varepsilon_1) \big]\\
&\qquad +8\E\big[\sin(\lambda \varepsilon_1)(1-2\sin^2(\frac{\lambda \varepsilon_1}{2}))\big]\E\sin(\lambda \varepsilon_1)[1-2\E\sin^2(\frac{\lambda \varepsilon_1}{2})]\\
&\qquad -3\big[(1-2\E\sin^2(\frac{\lambda \varepsilon_1}{2}))^2+\E^2\sin(\lambda  \varepsilon_1)\big]^2\\
&\leq c_2 |\lambda|^{4\gamma}.
\end{align*}
This completes the proof.
\end{proof}

\begin{lemma} \label{optimal}
Let $\phi_{\varepsilon}(\lambda)=\E[e^{\iota \lambda \varepsilon_1}]$ for all $\lambda\in\R$. If $\varepsilon_1$ has non-degenerate distribution, i.e., it is not equal to a constant almost surely, then in the inequalities
$$\E|e^{\iota\lambda\varepsilon_1}-\phi_{\varepsilon}(\lambda)|^2\leq c_{\gamma,2}(|\lambda|^{2\gamma}\wedge 1)$$ 
and 
$$\E|e^{\iota\lambda\varepsilon_1}-\phi_{\varepsilon}(\lambda)|^4\leq c_{\gamma,4}(|\lambda|^{4\gamma}\wedge 1),$$  
the range $\gamma\in(0,1]$ is optimal. 
\end{lemma}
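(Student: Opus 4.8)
The plan is to prove two halves of the optimality statement: (i) no non-degenerate $\varepsilon_1$ can satisfy either bound with an exponent $\gamma>1$, and (ii) the value $\gamma=1$ is attained, so the interval $(0,1]$ cannot be enlarged. I would organize it as follows.

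First I would reduce the fourth-moment inequality to the second-moment one. By the Lyapunov (or Cauchy--Schwarz) inequality, $\E|e^{\iota\lambda\varepsilon_1}-\phi_{\varepsilon}(\lambda)|^2\le\big(\E|e^{\iota\lambda\varepsilon_1}-\phi_{\varepsilon}(\lambda)|^4\big)^{1/2}$, so a bound $\E|e^{\iota\lambda\varepsilon_1}-\phi_{\varepsilon}(\lambda)|^4\le c_{\gamma,4}(|\lambda|^{4\gamma}\wedge1)$ immediately gives $\E|e^{\iota\lambda\varepsilon_1}-\phi_{\varepsilon}(\lambda)|^2\le\sqrt{c_{\gamma,4}}\,(|\lambda|^{2\gamma}\wedge1)$ with the \emph{same} $\gamma$. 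Hence it suffices to rule out the second-moment bound with $\gamma>1$.

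Next I would rewrite the left-hand side symmetrically. Letting $\varepsilon_1'$ be an independent copy of $\varepsilon_1$ and setting $Y:=\varepsilon_1-\varepsilon_1'$, one has $\E|e^{\iota\lambda\varepsilon_1}-\phi_{\varepsilon}(\lambda)|^2=1-|\phi_{\varepsilon}(\lambda)|^2=\E\big[1-\cos(\lambda Y)\big]$. Suppose, toward a contradiction, that $\E[1-\cos(\lambda Y)]\le c|\lambda|^{2\gamma}$ for all $\lambda$ near $0$ with some $\gamma>1$. Dividing by $\lambda^2$ gives $\E\big[\lambda^{-2}(1-\cos(\lambda Y))\big]\le c|\lambda|^{2\gamma-2}\to0$ as $\lambda\to0$. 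Since the integrands $\lambda^{-2}(1-\cos(\lambda Y))$ are nonnegative and converge pointwise to $Y^2/2$, Fatou's lemma yields $\tfrac12\E[Y^2]\le\liminf_{\lambda\to0}\E\big[\lambda^{-2}(1-\cos(\lambda Y))\big]=0$, so $Y=0$ a.s., i.e. $\varepsilon_1=\varepsilon_1'$ almost surely. As $\varepsilon_1$ and $\varepsilon_1'$ are i.i.d., this forces the law of $\varepsilon_1$ to be a point mass (otherwise two disjoint sets each of positive $\P(\varepsilon_1\in\cdot)$-measure would produce a positive-probability event on which $\varepsilon_1\ne\varepsilon_1'$), contradicting non-degeneracy. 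The one genuinely delicate point is precisely here: we cannot invoke dominated convergence, since $Y$ need not have a finite second moment a priori, so Fatou's lemma applied to the nonnegative integrands is the right tool.

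Finally I would observe that $\gamma=1$ is realized: for innovations with $\E|\varepsilon_1|^4<\infty$ — for instance the standard Gaussian, which is non-degenerate and possesses a density, consistent with the paper's standing assumption that $f$ exists — Lemma \ref{lma} gives both inequalities with $\gamma=1$. Thus the endpoint $1$ is attained while $\gamma>1$ is impossible for non-degenerate $\varepsilon_1$, which is exactly the asserted optimality of the range $\gamma\in(0,1]$.
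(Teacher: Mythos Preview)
Your proof is correct and shares the paper's two preliminary moves---the Cauchy--Schwarz reduction from the fourth-moment bound to the second-moment one, and the symmetrization $1-|\phi_\varepsilon(\lambda)|^2=\E[1-\cos(\lambda Y)]$ with $Y=\varepsilon_1-\varepsilon_1'$---but diverges at the key step. The paper instead observes that $\E|e^{\iota\lambda Y}-1|^2=2(1-|\phi_\varepsilon(\lambda)|^2)\le c|\lambda|^{2\gamma}$ yields a uniform increment bound $|\phi_Y(x+\lambda)-\phi_Y(x)|\le c_1|\lambda|^{\gamma}$ for the characteristic function of $Y$; since $\gamma>1$, $\phi_Y'\equiv 0$, so $\phi_Y\equiv 1$ and $\varepsilon_1$ is degenerate. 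Your Fatou argument, by contrast, works purely at the level of expectations: dividing by $\lambda^2$ and passing to the limit forces $\E[Y^2]=0$ directly. Your route is arguably more elementary (no differentiability of characteristic functions is invoked) and gives the slightly stronger intermediate conclusion that $Y\in L^2$ with zero variance; the paper's route is the classical ``H\"older exponent $>1$ implies constant'' trick, which avoids any measure-theoretic limit theorem. Both are short and complete; the paper does not separately argue that $\gamma=1$ is attained, leaving that implicit in Lemma~\ref{lma}, so your explicit mention of it is a small addition.
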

\begin{proof}
By Cauchy-Schwarz inequality, we only need to show that the range $\gamma\in(0,1]$ is optimal for the first inequality. Suppose $\E|e^{\iota \lambda \varepsilon_1}-\phi_{\varepsilon}(\lambda)|^2=1-|\phi_{\varepsilon}(\lambda)|^2\leq c(|\lambda|^{2\gamma}\wedge 1)$ for some $\gamma>1$. Since $\varepsilon_2$ is an independent copy of $\varepsilon_1$,  let $\phi_{\varepsilon_1-\varepsilon_2}(\lambda)=\E[e^{\iota \lambda (\varepsilon_1-\varepsilon_2)}]$,  then $1-\phi_{\varepsilon_1-\varepsilon_2}(\lambda)=1-|\phi_{\varepsilon}(\lambda)|^2\leq c(|\lambda|^{2\gamma}\wedge 1)$ for some $\gamma>1$. 

Note that
\begin{align*}
\E|e^{\iota \lambda (\varepsilon_1-\varepsilon_2)} -1|^2
&=1-\phi_{\varepsilon_1-\varepsilon_2}(\lambda)+1-\phi_{\varepsilon_1-\varepsilon_2}(-\lambda)=2(1-\phi_{\varepsilon_1-\varepsilon_2}(\lambda)).
\end{align*}
Therefore, for $\lambda$ close enough to 0,
\begin{align*}
|\phi_{\varepsilon_1-\varepsilon_2}(x+\lambda)-\phi_{\varepsilon_1-\varepsilon_2}(x)|
&\leq \E|e^{\iota \lambda \varepsilon_1-\varepsilon_2} -1|\leq \left( \E|e^{\iota \lambda \varepsilon_1-\varepsilon_2} -1|^2\right)^{\frac{1}{2}}\leq c_1 |\lambda|^{\gamma}.
\end{align*}
Since $\gamma>1$, $\phi_{\varepsilon_1-\varepsilon_2}'(x)=0$ for all $x\in\R$. That is, $|\phi_{\varepsilon}(x)|^2=\phi_{\varepsilon_1-\varepsilon_2}(x)=1$ for all $x\in\R$ or $\varepsilon_1$ equals a constant almost surely. 
So the $\gamma$ must be less than or equal to 1. 
\end{proof}

\section*{Acknowledgement}

We would like to thank the two referees and the Associate Editor for their valuable comments. We would also like to thank Professor Xia Chen for very helpful discussions. F. Xu is partially supported by National Natural  Science Foundation of China (Grant No.11401215), Shanghai Pujiang Program (14PJ1403300), 111 Project (B14019) and Natural Science Foundation of Shanghai (16ZR1409700).

\bigskip
\bigskip

\end{document}